\def\th@plain{%
  \thm@notefont{}
  \itshape 
}
\def\th@definition{%
  \thm@notefont{}
  \normalfont 
}
\newcolumntype{C}{>{\Centering\arraybackslash}X} 
\newtheorem{theorem}{Theorem}[section]
\newtheorem{assumption}[theorem]{Assumption}
\newtheorem{corollary}[theorem]{Corollary}
\newtheorem{definition}[theorem]{Definition}
\newtheorem{notation}[theorem]{Notation}
\newtheorem{example}[theorem]{Example}
\newtheorem{lemma}[theorem]{Lemma}
\newtheorem{proposition}[theorem]{Proposition}
\newtheorem{remark}[theorem]{Remark}
\numberwithin{equation}{section}
\numberwithin{figure}{section}
 \newcommand{\wOmega}{\widehat{\Omega}}
 \newcommand{\pwOmega}{\partial \widehat{\Omega}}
\newcommand{\Imap}[4]{\cI_{\Gamma_{#1.#2} \ra \Gamma_{#3,#4}}}
\newcommand{\Imaps}[4]{\cI_{\Gamma_{#1}^{#2} \ra \Gamma_{#3}^{#4}}}
\newcommand{\imp}{\mathrm{imp}} 
\newcommand{\hu}{{u}}
\newcommand{\hL}{{L}}
\newcommand{\hn}{{n}}
\newcommand{\hk}{{k}}
\newcommand{\hgamma}{{\gamma}}
\newcommand{\hrho}{{\rho}}
\newcommand{\hGamma}{{\Gamma}}
\newcommand{\hOmega}{{\Omega}}
\newcommand{\hdelta}{{\delta}}
\newcommand{\beq}{\begin{equation}}
\newcommand{\eeq}{\end{equation}}
\newcommand{\cA}{{\mathcal A}}
\newcommand{\cI}{{\mathcal I}}
\newcommand{\cL}{{\mathcal L}}
\newcommand{\cP}{{\mathcal P}}
\newcommand{\bcT}{\boldsymbol{\mathcal{T}}}
\newcommand{\bcL}{\boldsymbol{\mathcal{L}}}
\newcommand{\bcU}{\boldsymbol{\mathcal{U}}}
\newcommand{\cR}{\mathcal{R}}  
\newcommand{\tcR}{\widetilde{\mathcal{R}}}
\newcommand{\cT}{{\mathcal T}}
\newcommand{\cU}{{\mathcal U}}
\newcommand{\bx}{\boldsymbol{x}}
\newcommand{\bu}{\boldsymbol{u}}
\newcommand{\be}{\boldsymbol{e}}
\newcommand{\bv}{\boldsymbol{v}}
\newcommand{\supp}{\mathrm{supp}}
\newcommand{\ra}{\rightarrow}
\newcommand{\bbU}{\mathbb{U}}
\newcommand{\eps}{\varepsilon}
\newcommand{\re}{{\rm e}}
\newcommand{\ri}{{\rm i}}
\definecolor{myblue}{rgb}{0,0,0.6}
\definecolor{darkgreen}{rgb}{0,0.5,0}
\definecolor{escol}{rgb}{0,0,0.6}
\definecolor{sgcol}{rgb}{0,0,0.7}
\definecolor{estcol}{rgb}{0.5,0,0}
\definecolor{esnewcol}{rgb}{0,0.5,0}
\newcommand{\igg}[1]{{\color{black}{#1}}}
\newcommand{\iggg}[1]{{\color{black}{#1}}}
\newcommand{\ig}[1]{{\color{black}{#1}}}
\newcommand{\beqs}{\begin{equation*}}
\newcommand{\eeqs}{\end{equation*}}
\newcommand{\bit}{\begin{itemize}}
\newcommand{\eit}{\end{itemize}}
\newcommand{\ben}{\begin{enumerate}}
\newcommand{\een}{\end{enumerate}}
\newcommand{\bal}{\begin{align}}
\newcommand{\eal}{\end{align}}
\newcommand{\bals}{\begin{align*}}
\newcommand{\eals}{\end{align*}}
\newcommand{\bre}{\begin{remark}}
\newcommand{\ere}{\end{remark}}
\newcommand{\bpf}{\begin{proof}}
\newcommand{\epf}{\end{proof}}
\newcommand{\ble}{\begin{lemma}}
\newcommand{\ele}{\end{lemma}}
\newcommand{\bco}{\begin{corollary}}
\newcommand{\eco}{\end{corollary}}
\newcommand{\bex}{\begin{example}}
\newcommand{\eex}{\end{example}}
\newcommand{\bth}{\begin{theorem}}
\newcommand{\enth}{\end{theorem}}
\newcommand{\tfa}{\text{ for all }}
\newcommand{\tin}{\text{ in }}
\newcommand{\ton}{\text{ on }}
\newcommand{\tand}{\text{ and }}
\newcommand*{\N}[1]{\left\|#1\right\|}
\newcommand{\noi}{\noindent}
\newcommand{\tendi}{\rightarrow\infty}
\newcommand{\rI}{\mathrm{I}}
\newcommand{\mythmname}[1]{\textbf{\emph{(#1.)}}}
\newcommand{\pdiff}[2]{\frac{\partial #1}{\partial #2}}
\newtheorem{experiment}[theorem]{Experiment}
\title{Convergence of parallel overlapping domain decomposition methods  for the Helmholtz  equation }
\author{Shihua Gong$^{*}$,  Martin J.  Gander$^{\dagger }$, Ivan G.~Graham$^{**}$, \\ David Lafontaine$^{\dagger \dagger} $,  and Euan A.~Spence$^{**}$, 
  \\[2ex]
$^{*}$ School of Science and Engineering, The Chinese University of Hong Kong,\\ Shenzhen, Guangdong 518172, China \\ $^{**}$ Department of Mathematical Sciences, University of Bath, Bath BA2
7AY, UK.\\ 
 $^{\dagger }$ 
 Section de Math\'ematiques,
Universit\'e de Gen\`eve,
Suisse\\
$^{\dagger \dagger}$ 
CNRS and Institut de Math\'ematiques de Toulouse, UMR5219; Universit\'e de Toulouse, \\
CNRS; UPS, F-31062 Toulouse Cedex 9, France
}
\date{\today}
\begin{document}
\maketitle

\begin{abstract}

We analyse  parallel  overlapping Schwarz  domain decomposition
  methods for the Helmholtz equation, where the exchange of information between subdomains is achieved using first-order absorbing (impedance) transmission conditions,  together with 
   a partition of unity.  We  provide a novel analysis of  this method at the PDE level (without   discretization).
  First,  we formulate the method as a fixed point iteration, and show (in dimensions 1,2,3) that it is well-defined in a tensor product of appropriate local function spaces, each  with $L^2$ impedance boundary data.
We then  obtain a  bound on the norm of the fixed point  operator in terms of the local norms of  certain impedance-to-impedance maps arising from local interactions between subdomains. These bounds  provide  conditions under which (some power of)  the fixed point operator is a  contraction.
 In 2-d, for rectangular domains and strip-wise  domain decompositions (with each subdomain only overlapping its immediate neighbours),   we present  two techniques for  verifying the assumptions on the impedance-to-impedance maps that ensure power contractivity of the fixed point operator.  The first is  through semiclassical analysis, which gives rigorous estimates valid as the frequency tends to infinity. At least for a model case with two subdomains, these results verify the required assumptions for sufficiently large overlap. For more realistic domain decompositions,
we  directly  compute the norms of the  impedance-to-impedance maps by solving  certain canonical (local)  eigenvalue problems.   
  We give  numerical experiments that illustrate the theory.  These also show  that the iterative method remains convergent and/or provides a good preconditioner in cases not covered by the theory, including for general domain decompositions, such as those obtained via automatic graph-partitioning software.

\bigskip

\noi{\bf MSC2010 classification}: 65N22, 65N55, 65F08, 65F10, 35J05\\
\noi{\bf Keywords}: Helmholtz equation, High frequency, Domain decomposition, Overlapping subdomains,  Schwarz method, Impedance transmission condition  

\end{abstract}

\section{Introduction}  \label{sec:Intro} 

\subsection{The Helmholtz problem.}
\label{subsec:Helm} 
Motivated by  the large range of applications,  there is currently great interest in designing and analysing 
{domain decomposition methods}
 for discretisations of the Helmholtz equation
\begin{align} \label{eq:Helm}
 \Delta u + k^2 \, u =-f,  \quad \text{on} \quad \Omega,  
  \end{align}  
  on a $d-$dimensional bounded  domain $\Omega$  ($d = 2,3$),  with $k$ the (\iggg{spatially constant,  but possibly large}) angular frequency.
  \iggg{While the algorithm considered here is easily applicable to \eqref{eq:Helm} with very general boundary condition, geometry and variable $k$,  our theory is restricted here to the homogeneous interior impedance problem with  $k$  constant,  and the boundary condition}

  \begin{align}
\frac{    \partial u} {\partial n} - \ri \igg{k}  u = g \quad \text{on} \quad
     \partial \Omega, \label{eq:BC}
  \end{align}
  where $\partial u/\partial n$ \iggg{is the normal derivative, outward from   $\Omega$.}

    \subsection{Parallel domain decomposition method} \label{sec:parallel_algorithm}
To solve  \eqref{eq:Helm}, \eqref{eq:BC},  we  use a parallel overlapping Schwarz method with impedance transmission condition,    based on a set of Lipschitz polyhedral    
subdomains   $\{\Omega_j\}_{j = 1}^N$,  
forming  an overlapping cover of $\Omega$. \iggg{To derive this, note that if  $u$ solves  \eqref{eq:Helm}, \eqref{eq:BC},   then,   the restriction of $u$ to $\Omega_j$:}  
\begin{align} \label{eq:urest} u_j : =  u\vert_{\Omega_j}, \quad \text{for} \quad j \in \{ 1, \ldots, N\},\end{align}
satisfies 
\begin{align}
    (\Delta + k^2)u_j & = - f  \quad  &\text{in } \  \Omega_j, \label{eq11} \\
  \left(\frac{\partial }{\partial n_j} - \ri k \right) u_j & = \left(\frac{\partial }{\partial n_j} - \ri k \right) u \quad  &\text{on }  \ \partial \Omega_j\backslash \partial \Omega,  \label{eq12} \\
  \left(\frac{\partial }{\partial n_j} - \ri k \right) u_j & = g  \quad  &\text{on } \  \partial \Omega_j  \cap \partial \Omega,\label{eq13}
\end{align}
\igg{where $\partial /\partial n_j$ denotes the outward normal derivative on $\partial \Omega_j$.} 
To iteratively solve \eqref{eq11} -- \eqref{eq13}, we  introduce  a partition of unity (POU), 
 $\{\chi_j\}_{j = 1}^N$, with the properties 
 \beq
 \left.
 \begin{array}{ll} 
  \text{for each} \quad j: & \  \supp \chi_j \subset \overline{\Omega_j}, \quad
   0 \leq  \chi_j(\bx) \leq 1\,\,  \text{when } \bx \in \overline{\Omega_j},\\
   & \\
   \quad\tand\quad  & 
  \sum_j  \chi_j(\bx) = 1 \,\tfa \bx \in \overline{\Omega}.
 \end{array}
 \right\} \label{POUstar}
  \eeq
Then, the parallel Schwarz method reads as follows: given an iterate $u^n$ defined on $\Omega$,  we solve each  local problem on $\Omega_j$ for  $u^{n+1}_j$ ,
\begin{align}
  (\Delta + k^2)u_j^{n+1}  & = - f  \quad  &\text{in } \  \Omega_j,  \label{eq21}\\
  \left(\frac{\partial }{\partial n_j} - \ri k \right) u_j^{n+1}  & = \left(\frac{\partial }{\partial n_j} - \ri k \right) u^n \quad  &\text{on }  \ \partial \Omega_j\backslash \partial \Omega, \label{eq22}  \\
    \left(\frac{\partial }{\partial n_j} - \ri k \right) u_j^{n+1}  & = g  \quad  &\text{on } \  \partial \Omega_j  \cap \partial \Omega,  \label{eq23} 
\end{align}
and the new iterate is the weighted sum of the local solutions:
\begin{align} \label{star}
  u^{n+1} := \sum_\ell \chi_\ell  u_\ell ^{n+1}.
\end{align}
This  well-known method  can be thought of as a generalization of the classical  algorithm of Despr\'{e}s \cite{De91},  \cite{BeDe:97} to the case of overlapping subdomains. The form of the algorithm stated above can be found  in \cite[\S2.3]{DoJoNa:15}. The novel contribution of this paper is the convergence analysis of the  method. 

\subsection{The main results and structure of the paper} \label{subsec:main_results}

The main results of this paper are as follows.
  \begin{enumerate}
  \item A proof that the iterative method \eqref{eq21}-\eqref{star} is well-defined for general Lipschitz subdomains (Theorem \ref{thm:algorithm}) using well-posedness results for the Helmholtz equation on Lipschitz domains and the harmonic-analysis results of \cite{JeKe:95}.     
  \item The formulation of the fixed-point iteration \eqref{eq:errit}  for the \iggg{error vector $\be^n$, where
  $e_j^ n = u_j - u_j^n,  \ j  = 1, \ldots, N$, and the expression of powers of the fixed-point operator  $\bcT$} in terms of ``impedance-to-impedance maps'' linking \iggg{pairs of subdomains}  with non-trivial intersection (Theorem \ref{thm:main_T2}).
  \item For 2-d rectangular domains  covered by overlapping strips,  with each subdomain only overlapping its immediate neighbours, sufficient conditions for 
  $\bcT^N$ to be a contraction, where $N$ is the number of subdomains (Theorem \ref{thm:TLU} \iggg{and its corollaries}). These conditions are formulated in terms of norms of impedance-to-impedance maps and compositions of such maps.
\item A summary and explanation of the results from the companion paper \cite{LaSp:21} that bound impedance-to-impedance maps using rigorous high-frequency asymptotic analysis (a.k.a., semiclassical analysis). In particular, these results indicate that Theorem \ref{thm:TLU} implies contractivity of $\bcT^N$ {for a model case with two subdomains and} provided that the overlap is sufficiently large (see \S\ref{subsec:SCA}).
    \item Finite element  experiments (\S\ref{sec:numerical}) that both back up the theory and investigate scenarios out of the theory's current reach.
{Since the theory in Points 1-4 is at the continuous level without discretization, \S \ref{sec:fem}  first gives a description of the finite element algorithms used, along with justification that the results in \S\ref{sec:numerical} are reliable.}

The experiments related to the theory illustrate how the good behaviour of the relevant impedance-to-impedance maps induces good convergence of the iterative method. Those beyond  the theory show, for square domains and general domain decompositions, that the fixed point operator still enjoys a power contractivity property
(\S \ref{sec:numerical}).
  \end{enumerate}    

Regarding 2 and 3:~to our knowledge this is the first time that overlapping DD methods for Helmholtz have been analysed in terms of 
impedance-to-impedance maps. This analysis therefore gives a route to analyse overlapping DD methods for Helmholtz using the PDE theory of the Helmholtz equation, \iggg{which will then be} the focus of \cite{LaSp:21}.
Interest in  impedance-to-impedance (a.k.a., Robin-to-Robin)  maps can be widely  found in  the non-overlapping literature - see the references {in the literature review} below.
These   maps also arise  in the {formulation} of fast direct methods (e.g. \cite{GiBaMa:15}, \cite{PeTuBo:17}) and the recent work \cite{BeCaMa:21} analyses 
these maps in this setting (using complementary techniques to those in \cite{LaSp:21}). 
Previous work of some of the authors (e.g., \cite{GrSpZo:20, GoGrSp:20})
  also used  PDE theory to analyse  overlapping DD preconditioners; while this work was able to  cover very general geometries,  it was limited to the case when $\Im k>0$, corresponding to media with some absorptive properties. In the present paper we consider only the propagative case $\Im k=0$. 
  
Regarding 3:~In 1-d we recover the
  classical  result (a special case of \cite{NaRoSt:94}) that, with $N$ subdomains, the $N$th power of the fixed point operator is zero, and so the  iterative method converges in $N$ iterations.

The structure of the paper is as follows. 
\S\ref{sec:assumptions} contains the necessary well-posedness and regularity theory for the Helmholtz equation needed to prove the results described in Point 1 above. 
\S\ref{sec:fixedpoint} formulates the fixed-point iteration as described in Point 2.
\S\ref{sec:conv} focuses on 2-d strip decompositions as described in Point 3. 
\S\ref{sec:fem} describes the set-up for the finite-element computations used to illustrate the results, with the results of these computations given in \S\ref{sec:numerical}.

\subsection{Literature review }\label{sec:literature}

In the last decade there has been an  explosion of progress in the construction and analysis of 
solvers for 
  frequency-domain wave problems. 
Here we highlight those parts of the literature most related to the present paper;  
more substantial recent reviews can  be found, for example in \cite{GaZh:19} and in the introductions to \cite{GrSpZo:20} and \cite{TaZeHeDe:19}. 

The method \eqref{eq21}-\eqref{star} can be thought of  as a straightforward extension  of the parallel non-overlapping method of Despr\'{e}s \cite{De91},  \cite{BeDe:97} to the case of overlapping subdomains.  In \cite{De91}, \cite{BeDe:97} the 
coupling between subdomains at each iteration is achieved by feeding  to each subdomain  impedance data from its neighbours
at the previous iteration. In the overlapping case considered here,  the boundary impedance data for the next iterate
on a given subdomain is a weighted average of data coming from all subdomains overlapping it  (see \eqref{eq22}  and  \eqref{star}).  

The results of \cite{De91},  \cite{BeDe:97} 
   proved  convergence of the iterative algorithm via an energy argument. Although a rate of convergence was not provided, when it first appeared  this work   
   inspired huge interest in non-overlapping methods which {continues} today and a recent  review can be found in the introduction to
     \cite{ClPa:20}.
Indeed the results of \cite{De91},  \cite{BeDe:97} have recently been extended to higher-order boundary conditions in \cite{DeNiTh:20}.
Furthermore, there has been much interest in handling cross points in non-overlapping domain decomposition methods, e.g., at the PDE level in \cite{Cl:21}  and at the discrete level in
\cite{ClPa:20} and \cite{MoRoAnGe:20}. In \cite{Cl:21,ClPa:20} the ``multitrace'' theory, \iggg{originally introduced for  boundary integral equations}, was  used  to prove the contractivity of a certain non-overlapping domain decomposition method, even in the presence of   cross points (where more than two subdomains meet), 
albeit at the cost of inverting a global operator coupling the subdomains.

An early paper on transmission conditions for the overlapping case \cite{NaRoSt:94} showed that if the impedance
transmission condition was replaced by a transparent condition (constructed using the appropriate
Dirichlet to Neumann (DtN) map), then for a one dimensional sequence of $N$ subdomains with a first and last subdomain, the iterative method converges in $N$ steps; see also \cite{EnZh:98} for complementary results on the optimal choice of boundary condition. Since DtN maps are  not practical to compute,  a great deal of interest then focussed on effective approximations of them. For example, second order impedance operators were introduced in   \cite{GaMaNa:02} and discussed in many subsequent papers.  Pad\'{e} approximations  of the DtN map were investigated in \cite{Boubendir} and non-local integral equation-based techniques were  proposed in \cite{CoJoLe:20}, 
\iggg{although again \cite{Boubendir,CoJoLe:20} concern the non-overlapping case.}

The above ``Helmholtz-specific'' algorithms can also be thought of as examples of the more general class
of Optimized Schwarz methods,  a concept that is applicable to a wide range of PDEs, in which
transparent boundary conditions on subdomain boundaries are approximated by Robin or higher order
transmission conditions, 
optimized for fastest convergence  -- see, for example,  \cite{gander2000optimized, GaMaNa:02,GaZh:16a} and \cite{GanderOSM} and the references therein. For example,  \cite{LiuXu:2019} studies the (dis)advantages of large  overlap for a particular Schwarz method for the Helmholtz equation using two-sided Robin transmission conditions and an additional (optimized)
  relaxation parameter. However this particular method is
  somewhat different from the (essentially classical) general  method
  analysed  in the present paper. 
A historical perspective on Schwarz methods in general is given in \cite{gander2008schwarz}. 

The methods described above aim at maximising parallelism by solving independent subproblems at each iteration.  Since wave problems are fundamentally propagative there is  also great potential for alternating (or `multiplicative')  methods, in which solutions of subproblems are passed from subdomain to subdomain in the iterative process.
While these are less inherently parallel they can potentially gain in the number of iterations needed for convergence.
Algorithms that  can be classified as inherently multiplicative  include  the sweeping
preconditioner \cite{EY2}, the source transfer domain
decomposition \cite{Chen13a,DuWu:20}, the single-layer-potential
method \cite{Stolk}, the method of polarized traces
\cite{ZD}. 
All these methods are very much related, and can also be
understood in the context of optimized Schwarz methods -- see \cite{GaZh:19}. \ig{A related multiplicative method is the double sweep method,  introduced in  \cite{NN97,Vion} and partly analysed in \cite{NN97,BoCaNa:20}.}
Mostly these algorithms do not allow cross-points, although extensions to regular decompositions with cross points are proposed in  \cite{TaZeHeDe:19} and  \cite{leng2019additive}.

Related to the question of convergence  of Schwarz methods for Helmholtz problems, we remark that in the recent paper \cite{GaZh:22}, Fourier semi-analytical
techniques (different from, but complementary to,  the methods used in this paper)
are used
to study contractivity for strip-type domain decompositions for many different interior transmission and 
outer boundary conditions.

\subsection{Discussion of our results in the context of the literature}

While the majority of the theory discussed  above concerns  non-overlapping DD for Helmholtz, the present  paper develops
  a new  convergence analysis  in the overlapping case.
  As explained in  \S \ref{sec:fem}, the corresponding  solver  is  closely related  to the Optimized Restricted Additive Schwarz (ORAS) preconditioner, which  provides  the  foundational  one-level component for several  large-scale wave propagation solvers  e.g., \cite{MEDIMAX,BoDoGrSpTo:17c,HaJoNa:17,BoDoJoTo:20}. Thus  {the} theory {in the present paper} underpins several existing successful algorithms.  Unlike previous work (e.g., \cite{GoGrSp:20}) that aimed at proving that the field of values of the preconditioned operator did not include the origin - an extremely strong requirement -  our analysis here has  the more modest aim   of proving  power contractivity of the fixed point operator. This turns out to be not only provable {for a model problem} in simple-enough geometry but also observable in more general situations, giving hope that the present theory can be generalised. Power contractivity of the fixed point operator also ensures convergence rates for preconditioned GMRES.

  The estimates  ensuring power contractivity  in  \S \ref{sec:conv}, involving the norms of impedance-to-impedance maps  (which can be computed by solving canonical eigenvalue problems) are in some sense analogous to  condition number estimates in the positive definite case: both estimates provide   upper bounds that  can be controlled in certain parameter ranges, but the actual  value of the bound is rarely computed  when  solving a particular problem (so the bounds are ``descriptive'' rather than ``prescriptive'').        

\subsection{The discrete analogue of the results of this paper}

In  \cite{GoGaGrSp:21} it was  shown  (see also \S \ref{subsec:iterative}) that  a natural 
  finite element counterpart  of the  parallel iterative method considered in the current 
  paper is the Restricted Additive Schwarz method with impedance transmission condition (often called
  the Optimized Restricted Additive Schwarz (ORAS) method).

 Since this paper was written, three of the current authors developed
  a convergence theory for ORAS, thus providing  a discrete version of the theory given here.
  These results  are presented  in \cite{GoGrSp:21}.

   The theory in  \cite{GoGrSp:21} applies to discrete Helmholtz systems arising from     
 conforming nodal finite elements  of any polynomial
order and a general theory for discrete fixed point iteration analogous to \S\S  \ref{sec:assumptions}, \ref{sec:fixedpoint} on general Lipschitz domains and partitions of unity is presented.    For domain decompositions in strips in 2-d, we show that, when the mesh size is small enough, ORAS inherits the
convergence properties of the parallel iterative method at the PDE level which are proved here, 
independent of the  polynomial order of the elements. The proof relies on characterising the ORAS iteration in terms of discrete ‘impedance-to-impedance maps’ on local discrete Helmholtz-harmonic spaces, which we prove (via a novel weighted finite-element error analysis) converge as $h \rightarrow 0$ in the
operator norm to their non-discrete counterparts (i.e., the operators analysed here).
This discrete theory thus justifies the use of the finite element method to illustrate the properties of the iterative method at the PDE level, as we have done in {\S\ref{sec:numerical} of} this paper.
   
    \section{Helmholtz well-posedness and regularity theory}
 \label{sec:assumptions}
 \subsection{Basic notation and assumptions} 
\paragraph{Standard norms.} 

Let $(\cdot, \cdot)_\Omega$ denote   the usual $L^2(\Omega)$ inner product with induced norm $\Vert \cdot \Vert_\Omega$ and 
          denote the standard weighted $H^1$ norm by
         \begin{align} \label{weighted}  \Vert v \Vert_{1,k,\Omega}^2  =  \Vert \nabla v \Vert_\Omega^2 + k^2 \Vert v \Vert_\Omega^2 .  \end{align} 
  Let $\langle \cdot, \cdot \rangle_{\partial \Omega}$ denote the $ L^2(\partial \Omega )$ inner product,  with induced norm $\Vert \cdot \Vert_{\partial \Omega}$. For inner products over measurable   subsets
  $\widetilde{\Omega} \subset \Omega$ and $\widetilde{\Gamma} \subset \Gamma$, we
  write    $(\cdot, \cdot)_{\widetilde{\Omega}}$ and $\langle \cdot,\cdot\rangle_{\widetilde{\Gamma}}$.

\paragraph{Sesquilinear forms.} We define the global and local sesquilinear forms by
       \begin{align}\label{eq:sesq1}
      a(u,v) & := (\nabla u, \nabla v)_\Omega - k^2(u,v)_\Omega - \ri k \langle u,v\rangle_{\partial \Omega} \quad \text{for} \ \ u,v \in H^1(\Omega),      \\
\text{and} \quad       a_\ell(u,v)      &   := (\nabla u, \nabla v)_{\Omega_\ell} - k^2(u,v)_{\Omega_\ell} - \ri k \langle u,v\rangle_{\partial \Omega_\ell} \quad \text{for} \ \  u,v \in H^1(\Omega_\ell).  
       \label{eq:sesq2} \end{align}

\paragraph{Prolongation and restriction.}
     We  build a prolongation  $\tcR_\ell^\top: H^1(\Omega_\ell) \rightarrow H^1(\Omega)$ by multiplication  by the POU, i.e., for each $v_\ell \in H^1(\Omega_\ell)$, 
     \begin{align} \label{RtildeT} \tcR_\ell^\top  v_\ell  = \left\{ \begin{array}{ll} \chi_\ell v_\ell \quad & \text{on} \quad \Omega_\ell , \\  0 \quad &  \text{elsewhere} . \end{array}\right.  
     \end{align}
     Recalling that $u_\ell$ denotes the  restriction of $u$ to $\Omega_\ell$ (see \eqref{eq:urest}), we have  the important property 
                           \begin{align} \label{eq:RR1} \sum_{\ell = 1}^N \tcR_\ell^\top u\vert_{\Omega_\ell}= \sum_{\ell = 1}^N \chi_\ell u\vert_{\Omega_\ell}  = u, \quad \text{for all} \ \ u \in H^1(\Omega)  . \end{align}  

The main purpose of this section is to justify step \eqref{eq22} in the domain decomposition algorithm, by showing that for each $n$, the impedance trace of $u^n$ is in $L^2(\partial \Omega_\ell)$, for each $\ell$. This then ensures that, for each $\ell$,   $u_\ell^{n+1}$ is  well-defined in the space $U(\Omega_\ell)$ defined below and hence $u^{n+1} \in U(\Omega) $, so that $u^{n+1}$ in turn   provides suitable   $L^2$ impedance traces on each $\partial \Omega_\ell$ for the next iteration.
The  main result of this section is Theorem \ref{thm:algorithm}. 
To prove it  we need to  analyse  \eqref{eq21}-\eqref{star} 
in a space of higher regularity than $H^1$.  
In what follows we need the following further property of the partition of unity $\{ \chi_\ell \}$.  
\begin{assumption} \label{ass:POU} 
In addition to satisfying \eqref{POUstar}, $\chi_\ell \in C^{1,1} (\Omega_\ell)$. 
 \end{assumption} 

Such a partition of unity exists by, e.g., \cite[Theorem 3.21 and Corollary 3.22]{Mc:00}.
We note for later that Assumption \ref{ass:POU} implies that 
$\partial \chi_\ell/\partial n_\ell = 0$ on $\partial \Omega_\ell\setminus \partial\Omega$, and thus,
for any $v_\ell \in H^1(\Omega_\ell)$,
   \begin{align} \frac{\partial }{\partial n_\ell} (\chi_\ell v_\ell) = \frac{\partial \chi_\ell }{\partial n_\ell} v_\ell +
     \chi_\ell \frac{\partial v_\ell  }{\partial n_\ell} \ =\ 0 \quad \text{on} \quad \partial \Omega_\ell\backslash \partial\Omega.  \label{eq:diffprod} \end{align}

\begin{notation} \label{not:twiddles}
Where possible,  we explicitly indicate the dependence of our estimates on  the wavenumber $k$.  In this context, we always assume $k \geq k_0$ where $k_0>0$ is chosen a priori and we use the notation   $A\lesssim B$ to mean $A\leq CB$ with a constant $C$  independent of $k$ (but possibly depending on $k_0$) and $A \sim B$ to mean $A\lesssim B$ and $B\lesssim A$. 
  \end{notation} 
\subsection{The Helmholtz problem in spaces $U(D)$ and $U_0(D)$}\label{sec:spaces}

In this subsection $D$ denotes  a general Lipschitz domain, with boundary $\partial D$. 
\begin{definition}\label{def:U} 
Let
   $$ U (D) := \big\{ u \in H^1(D): \,  \Delta u  \in L^2(D), \, \partial u / \partial n  
   \in L^2(\partial D) \big\},  $$
with norm
   \begin{equation}\label{eq:normU}
   \|u\|_{U(D)} ^2: = k^{-2} \N{\Delta u}^2_{L^2(D)}  + \N{\nabla u }^2_{L^2(D)}  +  k^2 \N{ u}_{L^2(D)}^2
   + \N{\partial u/\partial n}^2_{L^2(\partial D)} + k^2\N{u}^2_{L^2(\partial D)}. 
      \end{equation}
Since the trace operator maps $H^1(D)$ to $H^{1/2}(\partial D)\subset L^2(\partial D)$, an equivalent definition of $U(D)$ is  
   $$ U (D) = \big\{ u \in H^1(D): \,  \Delta u +k^2 u \in L^2(D), \, \partial u / \partial n - \ri k u 
   \in L^2(\partial D) \big\}.  $$
Let
   $$ U_0 (D) := \big\{ u \in H^1(D): \,  \Delta u + k^2 u = 0
   \ \text{in} \  D, \ \partial u / \partial n - \ri k u \in L^2(
   \partial D) \big\} \ \subset U(D);  $$
in the rest of the paper we refer to $U_0(D)$ as the space of \emph{Helmholtz-harmonic  functions}  on $D$.
\end{definition} 

\ble[Well-posedness of the Interior Impedance Problem in  $U(D)$]\label{lem:IIPU}
Suppose  $D$ is Lipschitz and consider the problem  \begin{align} \label{IIP}\Delta u +k^2 u =-f  \quad \text{in} \quad  D \quad \text{ and}
                                                       \quad \partial u/\partial n - \ri k u= g \quad \text{on} \ \ \partial D, \end{align}
  with $f\in L^2(D)$ and $g\in L^2(\partial D)$. Then there exists a unique solution $u \in U(D)$ and $C_j= C_j(k)$, $j=1,2,$ such that 
\beq\label{eq:Saturday1}
\N{u}_{U(D)} \lesssim C_1(k) \N{f}_{L^2(D)} + C_2(k)\N{g}_{L^2(\partial D)}.
\eeq
\end{lemma}

\bpf[Proof of Lemma \ref{lem:IIPU}]
By the standard result about well-posedness of the interior impedance problem for Lipschitz $D$ (see, e.g., \cite[\S\S6.1.3, 6.1.6]{Sp:15}), a unique solution $u$ exists and there exist 
$C_j= C_j(k)$, $j=1,2,$ such that 
\beq\label{eq:Friday3}
\N{\nabla u}_{L^2(D)} + k \N{u}_{L^2(D)}\leq C_1(k)\N{f}_{L^2(D)} + C_2(k)\N{g}_{L^2(\partial D)}.
\eeq
Without loss of generality we can assume  that  $C_j(k) \gtrsim 1$, for $j = 1,2$. 
Then, multiplying the PDE in \eqref{IIP} by $\overline{u}$ and using Green's identity (see, e.g., \cite[Lemma 4.3]{Mc:00}), we find that 
\beqs
\langle \partial u/\partial n, u\rangle_{\partial D} -\N{\nabla u}^2_{L^2(D)} + k^2 \N{u}^2_{L^2(D)} = -\int_D f\, \overline{u}.
\eeqs
Inserting the boundary condition from \eqref{IIP}, taking the imaginary part, and using the Cauchy--Schwarz inequality gives
\beq
\label{eq:inequality24} 
k \N{u}^2_{L^2(\partial D)} \lesssim \N{f}_{L^2(D)}\N{u}_{L^2(D)} + \N{g}_{L^2(\partial D)}\N{u}_{L^2(\partial D)} .
\eeq
Now, multiplying \eqref{eq:inequality24} through by $k$ and using the inequality $2ab \leq  {\eps} a^2 + {\eps^{-1}}b^2$, {for any $a,b,\eps>0$} to estimate {both terms on} the right-hand side,  
we have  
\beqs
k^2  \N{u}^2_{L^2(\partial D)} \lesssim  \N{f}_{L^2(D)}^2 + \N{g}^2 _{L^2(\partial D)} +  k^2\N{u}_{L^2(D)}^2.
\eeqs
Combining this with \eqref{eq:Friday3}, we obtain 
\beq\label{eq:Friday3a}
\N{\nabla u}_{L^2(D)} + k \N{u}_{L^2(D)}
+k  \N{u}_{L^2(\partial D)}\lesssim(2 C_1(k) +1)\N{f}_{L^2(D)} + (2 C_2(k) + 1) \N{g}_{L^2(\partial D)},
\eeq
which is in the required form \eqref{eq:Saturday1}.

To complete the bound on $\|u\|_{U(D)}$, we therefore only need to bound $k^{-1}\|\Delta u\|_{L^2(D)}$ and $\|\partial u/\partial n\|_{L^2(\partial D)}$ by the right-hand side of \eqref{eq:Saturday1}; a bound on $k^{-1}\|\Delta u\|_{L^2(D)}$ follows from the PDE in \eqref{IIP} together with \eqref{eq:Friday3a}. The required bound on $\|\partial u/\partial n\|_{L^2(\partial D)}$ follows from the boundary condition in \eqref{IIP} together with  \eqref{eq:Friday3a}.
\epf

 \bre[The $k$-dependence of $C_1$ and $C_2$ in Lemma \ref{lem:IIPU}]  \label{rem:kdep}
For any Lipschitz domain $D$, $C_1(k)\gtrsim 1$ by \cite[Lemma 4.10]{Sp:14}, and when $D$ is a ball, 
$C_2(k)\gtrsim 1$ by \cite[Lemma 5.5]{BaSpWu:16}.

 If $D$ is \emph{either} Lipschitz star-shaped \emph{or} $C^\infty$, then \eqref{eq:Saturday1} holds with $C_1(k)\sim 1$ and $C_2(k)\sim 1$ by \cite[Equation 3.12]{MoSp:14} and \cite[Theorem 1.8 and Corollary 1.9]{BaSpWu:16} respectively. 

If $D$ is only assumed to be Lipschitz, then the sharpest existing result about the $k$-dependence of $C_1$ and $C_2$ is that $C_1(k)\sim k$ and $C_2(k)\sim k^{1/2}$ \cite[Theorem 1.6]{Sp:14}. If $\partial D$ is \ig{merely} piecewise smooth, then $C_1(k)\sim k^{3/4}$ and $C_2(k)\sim k^{1/4}$  \cite[Theorem 1.6]{Sp:14}. 
\ere

We now use results from the harmonic-analysis literature about the Laplacian on Lipschitz domains to give an alternative characterisation of the space $U(D)$. Let
\beqs
H^{3/2}(D;\Delta):= \Big\{ v \in H^{3/2}(D) : \Delta v \in L^2(D)\Big\},
\eeqs 
with norm
\beq\label{eq:H32norm}
\N{u}_{H^{3/2}(D;\Delta)}^2:= \N{u}_{H^{3/2}(D)}^2 + \N{\Delta u}^2_{L^2(D)}.
\eeq
The following theorem is a consequence of \cite[Corollary 5.7]{JeKe:95}; see \cite[Lemma 2]{CoDa:98}.
\begin{theorem}
  \label{thm:H32}
If $D$ is Lipschitz, then $U(D)=H^{3/2}(D;\Delta) $.
\end{theorem}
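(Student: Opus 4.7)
The plan is to prove the two set inclusions separately. The inclusion $H^{3/2}(D;\Delta) \subseteq U(D)$ is comparatively soft and reduces to trace theory for vector fields with additional interior regularity, while the reverse inclusion $U(D) \subseteq H^{3/2}(D;\Delta)$ is the substantive statement and is where the harmonic-analysis result of Jerison--Kenig is essential. Once both inclusions are established with accompanying norm bounds, an application of the open mapping theorem (or a direct comparison of the two norms) gives equivalence of norms on the coincident set.

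For the easy direction, given $u \in H^{3/2}(D;\Delta)$, I would observe that $u \in H^{3/2}(D) \hookrightarrow H^1(D)$ and $\Delta u \in L^2(D)$ by definition. To handle the normal derivative, the vector field $\nabla u$ lies in $H^{1/2}(D)^d$ and has distributional divergence $\Delta u \in L^2(D)$. For Lipschitz $D$, such vector fields (possessing both $H^{1/2}$ regularity and an $L^2$ divergence) admit a normal trace in $L^2(\partial D)$, with a bound by $\N{u}_{H^{3/2}(D)} + \N{\Delta u}_{L^2(D)}$; this is the content of the trace estimate underlying \cite[Lemma~2]{CoDa:98}. This yields $u \in U(D)$ together with a continuous embedding $H^{3/2}(D;\Delta)\hookrightarrow U(D)$.

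For the hard direction, given $u \in U(D)$, set $f:= -\Delta u \in L^2(D)$ and $g:= \partial u/\partial n \in L^2(\partial D)$, so that $u$ solves the inhomogeneous Neumann problem $-\Delta u = f$ in $D$, $\partial u/\partial n = g$ on $\partial D$. Jerison and Kenig's regularity result \cite[Corollary~5.7]{JeKe:95} asserts that for the Neumann problem on a bounded Lipschitz domain with $L^2$ boundary data, any solution lies in $H^{3/2}(D)$ with an estimate
\beqs
\N{u}_{H^{3/2}(D)} \lesssim \N{f}_{L^2(D)} + \N{g}_{L^2(\partial D)} + \N{u}_{L^2(D)}.
\eeqs
The non-uniqueness of the Neumann problem (up to constants) and the usual compatibility condition are handled by subtracting the mean of $u$ and applying their estimate to the zero-mean part, then reinstating the constant, which lies trivially in $H^{3/2}(D)$. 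Combining this with $\Delta u \in L^2(D)$ gives $u \in H^{3/2}(D;\Delta)$ with the required norm control.

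The main obstacle is not a single step of the proof itself but rather the \emph{correct invocation} of the Jerison--Kenig machinery: their results are stated for harmonic functions with $L^2$ Neumann data realised via single-layer potentials on the Lipschitz boundary, and translating that statement into the inhomogeneous formulation needed here requires splitting $u$ into a Newtonian-potential particular solution (handled by standard elliptic regularity on a smooth domain containing $D$, then restricted) and a harmonic remainder to which \cite[Corollary~5.7]{JeKe:95} applies directly. Once this decomposition is in place both inclusions follow, yielding equality of the sets and equivalence of their natural norms.
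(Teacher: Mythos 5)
The paper's own ``proof'' is the single sentence preceding the theorem statement, citing \cite[Corollary 5.7]{JeKe:95} and \cite[Lemma 2]{CoDa:98}; your proposal is therefore an expansion of those citations rather than a rival argument. For the inclusion $U(D)\subseteq H^{3/2}(D;\Delta)$ your expansion is the right one: split $u$ as a Newtonian potential of $\Delta u$ plus a harmonic remainder, apply the Jerison--Kenig $L^2$-Neumann regularity to the remainder, and handle the constants and the Neumann compatibility condition exactly as you describe.

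Your account of the reverse inclusion $H^{3/2}(D;\Delta)\subseteq U(D)$, however, contains a genuine gap. You characterise it as ``comparatively soft'' and base it on the assertion that a vector field in $H^{1/2}(D)^d$ with divergence in $L^2(D)$ admits a normal trace in $L^2(\partial D)$. On a Lipschitz domain, $s=1/2$ is precisely the critical index at which the trace operator on $H^s(D)$ breaks down, and the $L^2$-divergence hypothesis only recovers a normal trace in $H^{-1/2}(\partial D)$ via the usual Gauss--Green pairing. An $L^2(\partial D)$ normal trace for general $H^{1/2}(\mathrm{div})$ fields on Lipschitz domains is not a free-standing piece of trace theory, and if it holds at this endpoint at all it would itself require a nontrivial interpolation or harmonic-analysis argument. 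What is actually used is the gradient structure: one reduces, via the same Newtonian-potential splitting as in the other direction, to a harmonic $u\in H^{3/2}(D)$ whose boundary trace lies in $H^1(\partial D)$, and then invokes the Jerison--Kenig regularity-problem solvability together with Rellich identities to conclude that $\partial u/\partial n\in L^2(\partial D)$. In other words, this direction consumes the \emph{same} harmonic-analysis machinery as the other and is itself part of the content of \cite[Lemma 2]{CoDa:98}; it cannot be delegated to elementary vector-field trace theory. As written, the ``soft'' half of your argument appeals to a trace theorem you have not proved, effectively making it circular (it invokes the very lemma being established).
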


Since the $H^{3/2}(D;\Delta)$ norm is characterised only through norms on $D$ (as opposed to the norm on $U(D)$, which is characterised through norms on both $D$ and $\partial D$),
the following corollary holds. 
\begin{corollary}\label{cor:restrict}
If $v \in U(D)$ and $D'$ \igg{is a Lipschitz subdomain of $D$},  then the restriction of $v$ to $D'$ is in $U(D')$.
\end{corollary}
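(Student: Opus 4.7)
The plan is to reduce everything to Theorem \ref{thm:H32}. The whole point of that theorem, as the paragraph preceding the corollary emphasises, is that $U(D)$ coincides with $H^{3/2}(D;\Delta)$, whose norm \eqref{eq:H32norm} only involves integrals over $D$ and not over $\partial D$. This is crucial, because the literal definition of $U(D')$ requires control of $\partial v/\partial n$ in $L^2(\partial D')$, and $\partial D'$ may contain pieces lying strictly inside $D$; for such pieces there is \emph{a priori} no more than $H^{1/2}$ regularity of the normal trace available from $v\in H^1(D)$ alone. So a direct restriction argument on the boundary data is not going to work, and the Jerison--Kenig regularity theory of \cite{JeKe:95} is really being used.

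Concretely, given $v\in U(D)$, the first step is to invoke Theorem \ref{thm:H32} on $D$ to conclude $v\in H^{3/2}(D;\Delta)$, so that
\[
\|v\|_{H^{3/2}(D)}^2 + \|\Delta v\|_{L^2(D)}^2 < \infty.
\]
The second step is the elementary observation that restriction to a subdomain is continuous on integer- and fractional-order Sobolev spaces defined intrinsically: $v|_{D'}\in H^{3/2}(D')$ with $\|v|_{D'}\|_{H^{3/2}(D')}\leq \|v\|_{H^{3/2}(D)}$, and similarly $\Delta v|_{D'}\in L^2(D')$ with an analogous norm bound. Hence $v|_{D'}\in H^{3/2}(D';\Delta)$.

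The third step is to apply Theorem \ref{thm:H32} in the other direction on $D'$ (which is Lipschitz by assumption) to conclude $v|_{D'}\in U(D')$, with a norm bound controlled by $\|v\|_{U(D)}$.

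There is essentially no obstacle beyond checking that the restriction of an $H^{3/2}$ function to a Lipschitz subdomain is in $H^{3/2}$ with continuous norm bound; this is standard once one has an intrinsic (Sobolev--Slobodeckij) definition of the fractional norm, since the double integral defining the seminorm is automatically smaller when integrated over $D'\times D'\subset D\times D$. The only mild subtlety worth flagging is that one must use the intrinsic definition of $H^{3/2}$ rather than an extension-based one, but this is harmless since $D$ and $D'$ are both Lipschitz so all reasonable definitions agree.
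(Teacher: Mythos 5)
Your proof is correct and is essentially the argument the paper intends: identify $U(D)$ with $H^{3/2}(D;\Delta)$ via Theorem \ref{thm:H32}, restrict (which is trivially continuous since the $H^{3/2}(D;\Delta)$ norm \eqref{eq:H32norm} involves only integrals over $D$), and apply Theorem \ref{thm:H32} again on $D'$. Your added remark about using the intrinsic Sobolev--Slobodeckij norm is a sensible caveat, correctly dismissed since $D$ and $D'$ are both Lipschitz.
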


By Theorem \ref{thm:H32} and the definition of $U(D)$ in \S\ref{sec:spaces}, the norms $\N{\cdot}_{H^{3/2}(D;\Delta)}$ and 
$|||\cdot|||_{U(D)}$ defined by 
\beq\label{eq:H32norm2}
|||u|||_{U(D)} ^2: =
\N{\Delta u}_{L^2(D)}^2  + \N{\nabla u }_{L^2(D)}^2  +\N{ u}_{L^2(D)}^2
   + \N{\partial u/\partial n}_{L^2(\partial D)}^2+\N{ u}_{L^2(\partial D)}^2
\eeq
are equivalent. Moreover the equivalence constants are  $k-$independent  (since $k$ does not feature in the definition of either norm.). We therefore have the following corollary. 

\begin{corollary}[Neumann trace for functions in $H^{3/2}(\Omega;\Delta)$]
\beq\label{eq:Ntrace}
\N{\partial v/\partial n}_{L^2(\partial D)}\lesssim \N{v}_{H^{3/2}(D;\Delta)} \quad\tfa v\in H^{3/2}(D;\Delta),
\eeq
(i.e.,  the omitted constant is independent of $k$).
\end{corollary}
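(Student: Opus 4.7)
The plan is to read the conclusion directly off Theorem \ref{thm:H32} together with the $k$-independent norm equivalence stated just before the corollary. By Theorem \ref{thm:H32}, any $v \in H^{3/2}(D;\Delta)$ lies in $U(D)$, so the Neumann trace $\partial v/\partial n$ is an element of $L^2(\partial D)$ and the quantity $\|\partial v/\partial n\|_{L^2(\partial D)}$ is well-defined.

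Next, I would observe that $\|\partial v/\partial n\|_{L^2(\partial D)}$ is simply one of the (nonnegative) summands appearing in the definition \eqref{eq:H32norm2} of $|||\cdot|||_{U(D)}$. Consequently the trivial pointwise bound
\begin{equation*}
\N{\partial v/\partial n}_{L^2(\partial D)} \ \leq\ |||v|||_{U(D)}
\end{equation*}
holds. Since the statement immediately preceding the corollary tells us that $|||\cdot|||_{U(D)}$ and $\|\cdot\|_{H^{3/2}(D;\Delta)}$ are equivalent with $k$-independent equivalence constants (both norms being defined without any appearance of $k$), we obtain
\begin{equation*}
|||v|||_{U(D)} \ \lesssim\ \N{v}_{H^{3/2}(D;\Delta)},
\end{equation*}
and chaining the two inequalities yields the claimed bound with a $k$-independent implicit constant.

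There is essentially no obstacle here: the corollary is a direct, one-line consequence of (i) Theorem \ref{thm:H32}, (ii) the definition \eqref{eq:H32norm2}, and (iii) the $k$-independence of the equivalence between $|||\cdot|||_{U(D)}$ and $\|\cdot\|_{H^{3/2}(D;\Delta)}$ highlighted in the preceding paragraph. The only care needed is to emphasize that one must use $|||\cdot|||_{U(D)}$ rather than the original $\|\cdot\|_{U(D)}$ from \eqref{eq:normU}, because the latter contains $k$-dependent weights and would not give a $k$-uniform constant; the whole point of introducing the auxiliary norm \eqref{eq:H32norm2} is exactly to enable this step.
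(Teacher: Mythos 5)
Your proposal is correct and follows the same route as the paper: the paper does not give a separate proof of the corollary, but instead states it as an immediate consequence of the norm equivalence between $|||\cdot|||_{U(D)}$ and $\N{\cdot}_{H^{3/2}(D;\Delta)}$ (and its $k$-independence) established in the paragraph preceding it, which is exactly the chain of inequalities you spell out. Your parenthetical remark that one must work with $|||\cdot|||_{U(D)}$ rather than the $k$-weighted norm $\N{\cdot}_{U(D)}$ is precisely the point the paper is emphasising.
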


Because of the oscillatory character of $u$, one expects its $H^{3/2}$ norm to be $k^{1/2}$ times its $H^1$ norm; i.e.,
from \eqref{eq:Saturday1}, that $\N{u}_{H^{3/2}(\Omega)} \lesssim k^{1/2}\big( C_1(k) \N{f}_{L^2(D)} + C_2(k)\N{g}_{L^2(\partial D)}
\big)$. The following result almost proves this.

\begin{theorem}\label{cor:27}
Let $u$ be the solution of \eqref{IIP} with $f \in L^2(D)$ and $g \in L^2(\partial D)$. 
Then, for \iggg{any $\beta>1/2$}, there exists $C(\beta)>0$ such that
\beq\label{eq:H32bound}
\N{u}_{H^{3/2}(D)}\leq C(\beta) k^{\beta} \Big(C_1(k) \N{f}_{L^2(D)} + C_2(k)\N{g}_{L^2(\partial D)}\Big).
\eeq
\end{theorem}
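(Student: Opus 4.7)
Set $R := C_1(k)\|f\|_{L^2(D)} + C_2(k) \|g\|_{L^2(\partial D)}$. From Lemma~\ref{lem:IIPU} (together with the PDE in \eqref{IIP}, which gives $\Delta u = -f-k^2u$), I would first extract
\[
\|\nabla u\|_{L^2(D)} + k \|u\|_{L^2(D)} + \|\partial u/\partial n\|_{L^2(\partial D)} + k\|u\|_{L^2(\partial D)} \lesssim R, \qquad \|\Delta u\|_{L^2(D)} \lesssim kR.
\]
Combining these with Theorem~\ref{thm:H32} and the $k$-independent equivalence between $|||\cdot|||_{U(D)}$ and $\|\cdot\|_{H^{3/2}(D;\Delta)}$ immediately gives the \emph{naive} bound $\|u\|_{H^{3/2}(D)} \leq \|u\|_{H^{3/2}(D;\Delta)} \lesssim kR$, corresponding to $\beta=1$.

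Sharpening the exponent to $\beta>1/2$ is the heart of the matter. The heuristic $\|u\|_{H^{3/2}} \sim k^{1/2}\|u\|_{H^1}$, verified directly on the plane wave $e^{\ri k\omega\cdot x}$, says that the truly sharp bound is $k^{1/2}R$. The cleanest rigorous incarnation of this heuristic lives on $\bbR^d$: Cauchy--Schwarz inside Plancherel's identity gives
\[
\|v\|_{H^{3/2}(\bbR^d)}^2 \;\lesssim\; \|v\|_{H^1(\bbR^d)}\,\bigl(\|v\|_{L^2(\bbR^d)} + \|\Delta v\|_{L^2(\bbR^d)}\bigr),
\]
since $(1+|\xi|^2)^{3/2} \lesssim (1+|\xi|^2)^{1/2}(1+|\xi|^4)^{1/2}$. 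Inserting the first-step bounds on $\|u\|_{H^1}$, $\|u\|_{L^2}$ and $\|\Delta u\|_{L^2}$ would yield precisely $\|u\|_{H^{3/2}}\lesssim k^{1/2}R$.

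To transfer this inequality to the bounded Lipschitz $D$, I would argue either (i) by precomposing $u$ with a Stein-type extension $E:H^{3/2}(D)\to H^{3/2}(\bbR^d)$, applying the Fourier inequality above to $Eu$, and absorbing the distributional boundary contributions of $\Delta(Eu)$ via the $L^2(\partial D)$ trace bounds supplied in the first step; or (ii) by a complex-interpolation argument in a one-parameter family of intermediate Sobolev norms between the $H^1(D)$ and $H^{3/2}(D;\Delta)$ bounds, tuning the interpolation parameter as a function of $\beta$. Either route yields $\|u\|_{H^{3/2}(D)}\leq C(\beta)k^\beta R$ for any $\beta>1/2$, with the constant $C(\beta)$ diverging as $\beta \searrow 1/2$.

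\emph{Main obstacle.} Rigorously transferring the $\bbR^d$ Fourier interpolation inequality to the Lipschitz $D$ without losing more than $\varepsilon$ in the exponent. The sticking point is that no extension operator on a Lipschitz domain commutes with $\Delta$: the Laplacian of the extension carries a singular contribution supported on $\partial D$ that is not controlled by $\|\Delta u\|_{L^2(D)}$ alone. Managing this contribution---either by trading a small amount of Sobolev regularity for a controllable constant, or by splitting $D$ into an interior region (where elliptic regularity for the Helmholtz equation yields $H^2$ bounds at cost $kR$, using Corollary~\ref{cor:restrict}) and a boundary strip (handled via the $L^2(\partial D)$ trace bounds)---is what forces the strict inequality $\beta>1/2$ and blows up $C(\beta)$ at the endpoint.
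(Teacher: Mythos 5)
Your plan correctly identifies the heuristic (the factor $k^{1/2}$ is the expected interpolation gain between $H^1$ and $H^2$ for an oscillatory Helmholtz solution), and the $\bbR^d$ Fourier--Plancherel inequality you state is correct. But the decisive step---transferring that estimate to a bounded Lipschitz $D$---is left open: you name the obstacle (no extension operator commutes with $\Delta$, so $\Delta(Eu)$ acquires a singular boundary layer that $\N{\Delta u}_{L^2(D)}$ cannot control), yet neither of your two suggested routes is actually carried out, and neither is clearly viable. Route (i) stalls precisely at the point you flag, and the claim that the boundary contributions can be ``absorbed'' via $L^2(\partial D)$ trace bounds is an assertion, not an argument. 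Route (ii) is murkier still: $H^{3/2}(D;\Delta)$ is not an interpolation space in the standard Sobolev scale, so a complex-interpolation argument between $H^1(D)$ and $H^{3/2}(D;\Delta)$ has no off-the-shelf foundation. As written, your proposal is a correct statement of the target heuristic, with the essential Lipschitz-domain step missing.

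The paper sidesteps the difficulty entirely by invoking an external, $\lambda$-explicit $H^{3/2}$ resolvent estimate for the shifted Neumann Laplacian on a Lipschitz domain \cite[Theorems 3.2 and 3.5, Remark 3.3]{AmAm:21}: if $(\Delta - \lambda^2)v = F$ in $D$ and $\partial v/\partial n = G$ on $\partial D$ with $\Re\lambda\geq\lambda_0$, then for every $0 < r < 1/2$,
\begin{equation*}
\N{v}_{H^{3/2}(D)} \ \leq\ C_r\, |\lambda|^{1-2r}\bigl(|\lambda|^{1/2}\N{F}_{L^2(D)} + \N{G}_{L^2(\partial D)}\bigr).
\end{equation*}
Taking $\lambda = \ri k + 1$, $F = -f - (2\ri k + 1)u$, $G = \ri k u + g$, and inserting the $L^2$ bounds from \eqref{eq:Saturday1} gives \eqref{eq:H32bound} with $\beta = 3/2 - 2r$, which covers all $\beta>1/2$. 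In other words, the hard Lipschitz-domain regularity estimate you would need to prove from scratch is supplied, ready-made, by \cite{AmAm:21}. To make your plan rigorous, the right move is to cite such a theorem rather than attempt the extension or interpolation argument yourself; the paper's own remark following Lemma \ref{lem:Friday} (that the related $k$-weighted $H^{3/2}(D;\Delta)$ Neumann-trace estimate on Lipschitz domains is still open) indicates that the ad hoc transfer you sketch is unlikely to be quick.
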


\bpf
The combination of \cite[Theorem 3.2, Theorem 3.5, and Remark 3.3]{AmAm:21} implies that, if $\lambda \in \mathbb{C}$  with $\Re \lambda \geq \lambda_0$,
\beq\label{eq:AA1}
(\Delta-\lambda^2 )v =F \quad\tin D \qquad \tand \qquad\partial v/\partial n = G \quad\ton \partial D,
\eeq
with $F\in L^2(D)$ and \iggg{$G \in L^2(\partial D)$}, then, for all $0<r<1/2$ there exists $C_r>0$ such that
\beq\label{eq:AA2}
\N{v}_{H^{3/2}(D)}\leq C_r |\lambda|^{1-2r} \Big( |\lambda|^{1/2} \N{F}_{L^2(D)} + \iggg{\N{G}_{L^2(\partial D )}}\Big).
\eeq
Let $\lambda := \ri k +1$, then \eqref{eq:AA1} is satisfied with $v=u$, $G= \ri k u +g$, and \iggg{$F = -f- (2\ri k +1)u$}. Applying the bound \eqref{eq:AA2} we obtain that 
\beqs
\N{u}_{H^{3/2}(D)}\leq C_r k^{1-2r} \Big(k^{1/2}\big(\N{f}_{L^2(D)} + (k+1)\N{u}_{L^2(D)}\big)+ k\N{u}_{L^2(\partial D)}
+\N{g}_{L^2(\partial D)}
\Big).
\eeqs
The result \eqref{eq:H32bound} then follows from using \eqref{eq:Saturday1}, 
and the facts that $C_j(k)\gtrsim 1$, $j=1,2,$ by \cite[Lemma 4.10]{Sp:14}, \cite[Lemma 5.5]{BaSpWu:16} (as discussed in Remark \ref{rem:kdep}).
\epf

The following lemma studies the behaviour of the impedance trace of a function $u \in U(D)$ on an interface interior to $D$. This plays a key role in the analysis of the iterative method \eqref{eq21}-\eqref{star}. 

\begin{lemma}\label{lem:Friday} Suppose  $D,D'$ are both  Lipschitz domains  and $D' \subset D$.\\  
(i) If $u\in U(D)$, 
then
\beq\label{eq:Friday1}
\N{(\partial /\partial n - \ri k) u}_{L^2(\partial D')} \lesssim k \N{u}_{H^{3/2}(D';\Delta)}\leq  k \N{u}_{H^{3/2}(D;\Delta)}.
\eeq
(ii) If  $u \in U_0(D)$,  
then, 
\beq\label{eq:Friday2}
\N{(\partial /\partial n - \ri k) u}_{L^2(\partial D')} \lesssim \ig{k^2} \, C_2(k) \N{(\partial /\partial n - \ri k) u}_{L^2(\partial D)};
\eeq
i.e., ~the impedance-to-impedance map (defined more precisely  in \S \ref{subsec:relation}) is bounded as an operator from $ L^2(\partial D)$  to $L^2(\partial D')$.
\end{lemma}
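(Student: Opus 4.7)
The plan is to prove (i) by combining the Neumann trace estimate \eqref{eq:Ntrace} with a standard Dirichlet trace inequality, and then deduce (ii) by combining (i) with the well-posedness bound \eqref{eq:Saturday1} applied to $u$ on the full domain $D$, taking some care with the $k$-dependence of the norm equivalence between $U(D)$ and $H^{3/2}(D;\Delta)$.

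For part (i), I would begin by applying the triangle inequality to obtain
\[
\N{(\partial /\partial n - \ri k) u}_{L^2(\partial D')} \leq \N{\partial u/\partial n}_{L^2(\partial D')} + k\N{u}_{L^2(\partial D')}.
\]
Corollary \ref{cor:restrict} gives $u|_{D'} \in U(D')$, so by Theorem \ref{thm:H32} we have $u|_{D'} \in H^{3/2}(D';\Delta)$ and the Neumann-trace bound \eqref{eq:Ntrace} on $D'$ yields $\N{\partial u/\partial n}_{L^2(\partial D')}\lesssim\N{u}_{H^{3/2}(D';\Delta)}$ with $k$-independent constant. The standard continuity of the Dirichlet trace $H^1(D')\to L^2(\partial D')$ gives $\N{u}_{L^2(\partial D')}\lesssim \N{u}_{H^1(D')}\leq \N{u}_{H^{3/2}(D';\Delta)}$. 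Combining these two estimates and absorbing the constant term into $k$ via $k\geq k_0$ produces the first inequality in \eqref{eq:Friday1}. The second inequality is the monotonicity $\N{u}_{H^{3/2}(D';\Delta)}\leq \N{u}_{H^{3/2}(D;\Delta)}$, which is immediate from the fact that both summands defining the squared norm in \eqref{eq:H32norm} (the $H^{3/2}(D')$ seminorm/norm and $\N{\Delta u}_{L^2(D')}$) only increase when the domain of integration is enlarged from $D'$ to $D$.

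For part (ii), I would set $g := (\partial /\partial n - \ri k)u \in L^2(\partial D)$ and apply Lemma \ref{lem:IIPU} on $D$ with $f=0$ to conclude $\N{u}_{U(D)} \lesssim C_2(k)\N{g}_{L^2(\partial D)}$. Next, I would convert this to an $H^{3/2}(D;\Delta)$ bound. Comparing the weighted norm \eqref{eq:normU} with the unweighted norm \eqref{eq:H32norm2}, one sees that (for $k\geq k_0$) each term of $|||u|||_{U(D)}^2$ is bounded by $k^2$ times the corresponding term of $\N{u}_{U(D)}^2$, so $|||u|||_{U(D)} \lesssim k\N{u}_{U(D)}$. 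Using the $k$-independent equivalence of $|||\cdot|||_{U(D)}$ and $\N{\cdot}_{H^{3/2}(D;\Delta)}$ stated just after Corollary \ref{cor:restrict}, this gives $\N{u}_{H^{3/2}(D;\Delta)} \lesssim kC_2(k)\N{g}_{L^2(\partial D)}$. Feeding this into part (i) immediately yields \eqref{eq:Friday2}.

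The only substantive step is bookkeeping the $k$-dependence in the equivalence of the two $U(D)$-norms — in particular recognising that the factor $k$ in \eqref{eq:Friday1} and the factor $k^2$ in \eqref{eq:Friday2} arise as $k\cdot 1$ (one power from the $k$ in the impedance condition) and $k\cdot k$ (an additional power from converting the $k$-weighted $U(D)$-norm back to the unweighted $H^{3/2}(D;\Delta)$-norm), respectively. Once this is clear, everything else is triangle inequalities, standard trace theory, and direct application of results already proved in this section.
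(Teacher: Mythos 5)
Your proof of part (i) coincides with the paper's: the first inequality is \eqref{eq:Ntrace} plus the Dirichlet trace bound $\N{u}_{L^2(\partial D')}\lesssim\N{u}_{H^1(D')}\leq\N{u}_{H^{3/2}(D';\Delta)}$, and the second is monotonicity of the $H^{3/2}(\cdot;\Delta)$ norm under domain inclusion.

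For part (ii), however, you take a genuinely different route. The paper reduces \eqref{eq:Friday2} to the intermediate bound \eqref{eq:Friday4}, namely $\N{u}_{H^{3/2}(D;\Delta)}\lesssim k\,C_2(k)\N{(\partial/\partial n-\ri k)u}_{L^2(\partial D)}$, and then proves it by splitting $\N{u}_{H^{3/2}(D;\Delta)}\lesssim \N{u}_{H^{3/2}(D)}+k^2\N{u}_{L^2(D)}$ (using $\Delta u=-k^2u$), estimating the first summand via Theorem \ref{cor:27} (the $H^{3/2}$ regularity bound \eqref{eq:H32bound} imported from \cite{AmAm:21}) and the second via \eqref{eq:Saturday1}. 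You instead derive the same intermediate bound by pairing the $k$-independent equivalence $\N{\cdot}_{H^{3/2}(D;\Delta)}\sim|||\cdot|||_{U(D)}$ (stated just after Corollary \ref{cor:restrict} and a consequence of Theorem \ref{thm:H32} alone) with the elementary term-by-term comparison $|||u|||_{U(D)}\lesssim k\N{u}_{U(D)}$ — the dominant ratio being the $\Delta u$ term, $\N{\Delta u}^2 = k^2\cdot(k^{-2}\N{\Delta u}^2)$ — and then applying \eqref{eq:Saturday1}. This completely bypasses Theorem \ref{cor:27} and the \cite{AmAm:21} machinery, relying only on Lemma \ref{lem:IIPU}, Theorem \ref{thm:H32}, and \eqref{eq:Ntrace}, and it yields the identical factor $k^2 C_2(k)$ (one $k$ from the trace estimate in (i), one $k$ from the norm conversion). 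Your version is thus somewhat more self-contained, though it does not improve the $k$-powers; the paper's detour through Theorem \ref{cor:27} gives a sharper bound $O(k^{1/2+\epsilon})$ on the $\N{u}_{H^{3/2}(D)}$ piece alone, but that gain is then swamped by the $k^2\N{u}_{L^2(D)}$ contribution, so the final exponent is the same either way.
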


\bpf[Proof of Lemma \ref{lem:Friday}]
The first inequality in \eqref{eq:Friday1} follows from \eqref{eq:Ntrace} and $$\|u\|_{L^2(\partial D')}\lesssim \|u\|_{H^1(D')} \leq  \|u\|_{H^{3/2}(D';\Delta)}.$$ The second inequality in \eqref{eq:Friday1} follows from the definition \eqref{eq:H32norm} of $\|\cdot\|_{H^{3/2}(D)}$ and the inclusion $D'\subset D$.
By \eqref{eq:Friday1}, to prove \eqref{eq:Friday2} we only need to prove that, \ig{for $ u \in U_0(D)$,} 
\beq\label{eq:Friday4}
\N{u}_{H^{3/2}(D;\Delta)}\ig{\ \lesssim  \ k } \,C_2(k) \N{(\partial /\partial n - \ri k) u}_{L^2(\partial D)}. 
\eeq
However, since 
$$ \N{u}_{H^{3/2}(D;\Delta)} \lesssim \Vert u \Vert_{H^{3/2} (D)} + \Vert \Delta u \Vert_{L^2(D)} = \Vert u \Vert_{H^{3/2} (D)} + k^2 \Vert  u \Vert_{L^2(D)},$$ \eqref{eq:Friday4} follows by using  \eqref{eq:H32bound} and  \eqref{eq:Saturday1}.
\epf

We make two remarks: 

(i) Lemma \ref{lem:Friday} makes no assumptions about the geometries of $D'$ and $D$, other than that they are both Lipschitz and $D'\subset D$.

(ii) The powers of $k$ in \eqref{eq:Friday1} and \eqref{eq:Friday2} are almost-certainly not sharp; this is because the right-hand side of the trace result \eqref{eq:Ntrace} involves a norm on $H^{3/2}(D;\Delta)$ that does not weight derivatives with the appropriate powers of $k$ (in contrast to, e.g., \eqref{eq:normU}). As far as we are aware, the result analogous to \eqref{eq:Ntrace} with 
an $H^{3/2}(D;\Delta)$ norm weighted with $k$ and a potentially-$k$-dependent constant has not yet been proved for general Lipschitz domains.

\subsection{Well-posedness of the domain decomposition  algorithm}
\label{subsec:well-posed}

 We now discuss the behaviour of the  algorithm \eqref{eq21}-\eqref{star} in the  product spaces:
\beq\label{eq:product}
 \bbU : = \prod_{\ell = 1} ^N U(\Omega_\ell)
 \quad\tand\quad
 \bbU_0 : = \prod_{\ell = 1} ^N U_0(\Omega_\ell).
  \eeq
   In this section we show the well-posedness of \eqref{eq21}-\eqref{star}   in $\bbU$.
The convergence analysis of \eqref{eq21}-\eqref{star} in the following section is set in $\bbU_0$.
First
we need the following lemma, which exploits the smoothness requirement on the partition of unity (Assumption \ref{ass:POU}).

\ble\label{lem:map} Suppose Assumption \ref{ass:POU} holds. 
(i) If $v_\ell \in U(\Omega_\ell)$ then $\chi_\ell v_\ell  \in U(\Omega_\ell)$.\ \ 
(ii) $\tcR_\ell^\top: U(\Omega_\ell) \rightarrow U(\Omega)$.
\ele

\bpf
(i) By Assumption \ref{ass:POU}, $\chi_\ell \in C^{1,\alpha}$ for $\alpha>1/2$. Therefore, by \cite[Theorem 1.4.1.1]{Gr:85},   $\chi_\ell v_\ell \in H^{3/2}(\Omega_\ell)$.
Since $\chi_\ell \in C^{1,1}$, Rademacher's theorem \cite[Page 96]{Mc:00} implies that $\Delta \chi_\ell$ exists almost everywhere as an $L^\infty$ function on $\Omega_\ell$, and thus 
\beqs
\Delta( \chi_\ell v_\ell)= \chi_\ell \Delta v_\ell + 2 \nabla\chi_\ell \cdot \nabla v_\ell + v_\ell \Delta \chi_\ell  \in L^2(\Omega_\ell);
\eeqs
therefore $\chi_\ell v_\ell \in H^{3/2}(\Omega_\ell; \Delta)$. Hence,  by Theorem \ref{thm:H32}, $\chi_\ell v_\ell \in U(\Omega_\ell)$.

To prove (ii),  observe that, by Assumption \ref{ass:POU},  \eqref{eq:diffprod} and the definition of $U(\Omega_\ell)$, $\partial (\chi_\ell v_\ell)/\partial n_\ell \in L^2(\partial \Omega_\ell)$ and
\beq
\N{\pdiff{\big(\widetilde{\mathcal{R}}_\ell^\top v_\ell\big) }{n} }_{L^2(\partial \Omega)} =
\N{\pdiff{\big(\chi_\ell v_\ell\big)}{n_\ell} }_{L^2(\partial \Omega\cap \partial \Omega_\ell)}  
\leq \N{\pdiff{\big(\chi_\ell v_\ell\big)}{n_\ell} }_{L^2(\partial \Omega_\ell )} <\infty. \label{eq:L2star} 
\eeq
Because $\partial (\widetilde{\mathcal{R}}_\ell^\top v_\ell)/\partial n\in L^2(\partial \Omega)$,
  to finish the proof that  $\tcR^\top_\ell v_\ell \in U(\Omega)$,  we need to show that $\tcR^\top_\ell v_\ell  \in   H^1(\Omega;\Delta)$. 
To do this, recall that, by the definition of the weak derivative and the divergence theorem, a piecewise $H^1$ function is globally $H^1$ if it is continuous across the interface between the pieces.
   Therefore, since $\chi_\ell v_\ell=0$ on $\partial \Omega_\ell$, 
  $\tcR_\ell^\top v_\ell \in H^1(\Omega)$. 
Also, since $\partial (\chi_\ell v_\ell)/\partial n$ also vanishes on  $\partial \Omega_\ell\backslash \partial\Omega$ 
(recall \eqref{eq:diffprod}),  a similar argument shows that the Laplacian of $\tcR_\ell^\top v_\ell$ is in $L^2(\Omega)$. 
\epf

\begin{theorem}\mythmname{The algorithm in \S\ref{sec:parallel_algorithm} is well-defined in both $U(\Omega)$ and  $\bbU$}
  \label{thm:algorithm}
  Suppose  $\Omega$ and $\Omega_\ell, \ \ell = 1,\ldots, N$ are  Lipschitz and let  Assumption \ref{ass:POU} hold.\\
  (i) Suppose $u^n \in U(\Omega)$ and define  $u^{n+1}$ by  \eqref{eq21}-\eqref{star}. Then  
  $u^{n+1}\in U(\Omega)$.\\
  (ii) Suppose $\bu^n \in \bbU$. Define  $u^n$  by \eqref{star} (with $n+1$ replaced by $n$) and then  $\bu^{n+1}$ by \eqref{eq21}-\eqref{eq23}. Then  $\bu^{n+1} \in \bbU$. 
\end{theorem}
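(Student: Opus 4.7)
My plan is to derive part (i) by combining the preparatory material (the well-posedness result Lemma \ref{lem:IIPU}, the restriction property Corollary \ref{cor:restrict}, the trace bound Lemma \ref{lem:Friday}(i), and the POU mapping property Lemma \ref{lem:map}(ii)) into a four-step chain: restrict $u^n$ to each subdomain with enough regularity, extract an $L^2$ impedance trace on each interior boundary, solve the local problem in $U(\Omega_\ell)$, and assemble back in $U(\Omega)$ via the partition of unity. Part (ii) then reduces to part (i) after a short preliminary argument that writes $u^n$ as a sum of $\tcR_\ell^\top$-images.

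For (i), I fix $\ell$ and assume (as is implicit from the problem data) that $f\in L^2(\Omega)$ and $g\in L^2(\partial \Omega)$. Since $\Omega_\ell$ is a Lipschitz subdomain of $\Omega$ and $u^n\in U(\Omega)$, Corollary \ref{cor:restrict} gives $u^n|_{\Omega_\ell}\in U(\Omega_\ell)$. Applying Lemma \ref{lem:Friday}(i) with $D=\Omega$ and $D'=\Omega_\ell$ shows that $(\partial/\partial n_\ell-\ri k)u^n \in L^2(\partial \Omega_\ell)$, so the impedance data prescribed in \eqref{eq22}--\eqref{eq23} lies in $L^2(\partial \Omega_\ell)$ (noting that on $\partial \Omega_\ell\cap\partial \Omega$ the data is $g|_{\partial \Omega_\ell\cap\partial \Omega}\in L^2$ by assumption). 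Together with $f|_{\Omega_\ell}\in L^2(\Omega_\ell)$, Lemma \ref{lem:IIPU} produces a unique $u^{n+1}_\ell\in U(\Omega_\ell)$ satisfying \eqref{eq21}--\eqref{eq23}. Finally, Lemma \ref{lem:map}(ii) gives $\tcR_\ell^\top u^{n+1}_\ell \in U(\Omega)$ for each $\ell$, and since $U(\Omega)$ is a vector space, \eqref{eq:RR1}-style summation yields
\[
u^{n+1} \;=\; \sum_{\ell=1}^N \chi_\ell u^{n+1}_\ell \;=\; \sum_{\ell=1}^N \tcR_\ell^\top u^{n+1}_\ell \;\in\; U(\Omega),
\]
which proves (i).

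For (ii), the task is to start from $\bu^n\in\bbU$ and show each of the local outputs $u^{n+1}_\ell$ lies in $U(\Omega_\ell)$. I would first observe that $u^n := \sum_\ell \chi_\ell u^n_\ell = \sum_\ell \tcR_\ell^\top u^n_\ell$ is an element of $U(\Omega)$: each summand is in $U(\Omega)$ by Lemma \ref{lem:map}(ii) applied to $u^n_\ell\in U(\Omega_\ell)$, and $U(\Omega)$ is a vector space. Having promoted $u^n$ to $U(\Omega)$, I can now run exactly the argument from (i) up to (but not including) the final assembly step: for each $\ell$, the impedance data on $\partial \Omega_\ell$ is in $L^2$ by Lemma \ref{lem:Friday}(i), and Lemma \ref{lem:IIPU} produces $u^{n+1}_\ell\in U(\Omega_\ell)$, so $\bu^{n+1}\in\bbU$.

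The only delicate point in the whole argument is step (b) — the fact that a function with only $H^{3/2}(\Omega;\Delta)$-type regularity has an $L^2$ Neumann trace on the boundary of an arbitrary Lipschitz subdomain (not merely on $\partial \Omega$ itself). This is precisely what the Jerison--Kenig identification $U(D)=H^{3/2}(D;\Delta)$ in Theorem \ref{thm:H32}, combined with the $k$-independent trace estimate \eqref{eq:Ntrace}, encodes and it is the crux of the proof; all other steps are standard manipulations with the partition of unity and the scalar Helmholtz impedance solvability.
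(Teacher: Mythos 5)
Your proposal is correct and follows essentially the same route as the paper's proof: use Lemma \ref{lem:Friday}(i) to place the impedance trace of $u^n$ in $L^2(\partial\Omega_\ell)$, Lemma \ref{lem:IIPU} to place each local solve in $U(\Omega_\ell)$, and Lemma \ref{lem:map}(ii) together with \eqref{star} to assemble $u^{n+1}\in U(\Omega)$; your part (ii) just spells out the reduction to (i) that the paper leaves as ``similar''. One small observation: your invocation of Corollary \ref{cor:restrict} is harmless but redundant, since Lemma \ref{lem:Friday}(i) is already stated for $u\in U(D)$ with $D'\subset D$ and so applies directly with $D=\Omega$, $D'=\Omega_\ell$ without first restricting.
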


\bpf
We prove (i) only; the proof of (ii) is similar.
Part (i) of Lemma \ref{lem:Friday}  implies that\linebreak  $(\partial /\partial n_j - \ri k) u^n\in L^2(\partial\Omega_j)$.
Therefore, by Lemma \ref{lem:IIPU}, $u_j^{n+1}$ (defined by \eqref{eq21}-\eqref{eq23}) is in
$U(\Omega_j)$. 
Then \eqref{star} and Lemma \ref{lem:map} imply that
 $u^{n+1}\in U(\Omega)$.
\epf

\section{Framework for the  convergence analysis }
\label{sec:fixedpoint}

In this section we develop the tools needed to analyse   the algorithm   \eqref{eq21}--\eqref{star} in the space
$\bbU_0$. 
\subsection{The error propagation operator $\cT$}
  To begin, recalling \eqref{eq:urest}, we introduce the error   
 \begin{align} \label{31} \mathbf{e}^n = (e_1^n, e_2^n, \ldots e_N^n)^\top , \quad \text{where} \quad  e_\ell^n := u_\ell - u_\ell^n  = u\vert_{\Omega_\ell}  - u_\ell^n,  \quad \ell = 1, \ldots, N. \end{align}   Then, using    \eqref{star} and \eqref{eq:RR1},  
 the global error $e^n := u - u^n $  can be written 
\begin{align} \label{eq:global} 
e^n =\sum_\ell \chi_\ell   u\vert_{\Omega_\ell }  -\sum_\ell    \chi_\ell u_\ell^n = \sum_\ell \chi_\ell  e_\ell^n . 
\end{align}
Thus, subtracting  \eqref{eq21}-\eqref{eq23} from  \eqref{eq11}-\eqref{eq13}, we obtain 
\begin{align}
  (\Delta + k^2)e_j^{n+1}  & = 0  \quad  \text{in } \  \Omega_j,  \label{eq31}\\
  \left(\frac{\partial }{\partial n_j} - \ri k \right) e_j^{n+1}  & = \left(\frac{\partial }{\partial n_j} - \ri k \right) e^n \ = \ \sum_\ell \left(\frac{\partial }{\partial n_j} - \ri k \right) \chi_\ell e_\ell^n,   \quad  \text{on }  \ \partial \Omega_j\backslash \partial \Omega, \label{eq32}  \\
    \left(\frac{\partial }{\partial n_j} - \ri k \right) e_j^{n+1}  & = 0  \quad  \text{on } \  \partial \Omega_j  \cap \partial \Omega.   \label{eq33} 
\end{align} 
This motivates the introduction of the operator-valued matrix $\bcT = (\cT_{j,\ell})_{j,\ell = 1}^N$, defined as follows.  For $v_\ell \in U(\Omega_\ell)$,  and any $j$, 
\begin{align}
  (\Delta + k^2)(\cT_{j,\ell} v_\ell)  & = 0  \quad  \text{in } \  \Omega_j,  \label{eq41}\\
  \left(\frac{\partial }{\partial n_j} - \ri k \right) (\cT_{j,\ell} v_\ell) & = \left(\frac{\partial }{\partial n_j} - \ri k \right) (\chi_\ell v_\ell) \quad  \text{on }  \ \partial \Omega_j\backslash \partial \Omega, \label{eq42}  \\
    \left(\frac{\partial }{\partial n_j} - \ri k \right) ({\cT_{j,\ell} v_\ell}) & = 0  \quad  \text{on } \  \partial \Omega_j \cap \partial \Omega.   \label{eq43} 
\end{align}
Therefore,
 \begin{align}\label{eq:errit} 
  e_j^{n+1} = \sum_\ell \cT_{j,\ell} e_\ell^n, \quad \text{and so} \quad  \be^{n+1} = \bcT \be^{n}.   
 \end{align}

\begin{remark} \label{rem:null}
     (i) By  Assumption \ref{ass:POU},   $(\partial /\partial n_\ell - \ri k )(\chi_\ell v_\ell)$ vanishes on $\partial \Omega_\ell$,  and so  $\cT_{\ell, \ell} \equiv 0$ for all $\ell$.\\
(ii) If $\Omega_j \cap\Omega_\ell = \emptyset$, then $\cT_{j,\ell}=0$. \\
     (ii) It is convenient here to introduce  the notation
     \begin{align} \Gamma_{j,\ell} = \partial \Omega_j \cap \Omega_\ell,\label{defGamma}\end{align} so that  \eqref{eq42} holds on $\Gamma_{j,\ell}$ and \eqref{eq43} holds on $\partial \Omega_j \backslash \Gamma_{j,\ell}$.   
     \end{remark}

 Since $e_j^n $ is Helmholtz-harmonic in $\Omega_j$ for each $j$,  we aim to  analyse convergence of \eqref{eq:errit} in the space $\bbU_0$ defined in \eqref{eq:product}. 
\ig{For the rest of this section we restrict to 2-d and 3-d, using  the following norm, previously introduced in \cite[Equation 12]{BeDe:97}. (The  1-d case is discussed brielfy  in \S \ref{subsec:1d}, where the norm on the boundary data is trivially the modulus on $\mathbb{C}$.) }

 \begin{definition}[Norm on $U_0(D)$]
For $D$ a bounded Lipschitz domain and $v \in U_0(D)$, let
   \begin{align}\label{eq:pseudo-energy}
\|v\|_{1,k,\partial D}^2 \ := \  
\N{\pdiff{v}{n}}^2_{L^2(\partial D)} + k^2 \N{v}^2_{L^2(\partial D)},
   \end{align}
   where  $\partial/\partial n$ denotes the outward normal derivative on $\partial D$.
\end{definition}

The next lemma guarantees that this is indeed a norm on $U_0(D)$, {with the relation \eqref{eq:norm2} a well-known ``isometry" result about impedance traces; see, e.g., \cite[Lemma 6.37]{Sp:15}, \cite[Equation 3]{ClCoJoPa:21}.}

\begin{lemma}[Equivalent formula for $\Vert\cdot\Vert_{1,k,\partial D}$]\label{lm:orth-trace-helm}
  For all  $v \in U_0(D)$ and $k > 0$, 
  \begin{align} \label{eq:norm2}   \|v\|_{1,k,\partial D}^2 \ = \ 
\N{\pdiff{v}{n}-\ri k v }^2_{L^2(\partial D)}\ =\  \N{-\pdiff{v}{n}-\ri k v }^2_{L^2(\partial D)},
     \end{align} 
and so   $\| \cdot \|_{1,k,\partial D}$ is a norm on $U_0(D)$. Furthermore, if $D$ is either Lipschitz star-shaped or $C^\infty$, then $\| \cdot \|_{1,k,\partial D}$
      is equivalent to $\Vert \cdot \Vert_{U(D)}$,  with  equivalence constants independent of $k$. 
\end{lemma}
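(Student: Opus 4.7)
The plan is to establish the identity \eqref{eq:norm2} first via Green's identity, then deduce the norm property from well-posedness of the impedance problem (Lemma \ref{lem:IIPU}), and finally obtain the norm equivalence by invoking that same lemma together with the $k$-independent bound supplied by Remark \ref{rem:kdep}.

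First I would expand
\begin{equation*}
\N{\partial v/\partial n - \ri k v}^2_{L^2(\partial D)} = \N{\partial v/\partial n}^2_{L^2(\partial D)} + k^2 \N{v}^2_{L^2(\partial D)} - 2k\,\mathrm{Im}\langle \partial v/\partial n, v\rangle_{\partial D},
\end{equation*}
using the elementary identity $|a - \ri b|^2 = |a|^2 + |b|^2 - 2\,\mathrm{Im}(a\overline{b})$ with $a = \partial v/\partial n$, $b = kv$, and $k$ real. The corresponding expansion of $-\partial v/\partial n - \ri k v$ differs only by a sign flip in the cross term. Next I would apply Green's first identity, which for $v \in U_0(D)$ yields
\begin{equation*}
\langle \partial v/\partial n, v\rangle_{\partial D} = \N{\nabla v}^2_{L^2(D)} + \int_D (\Delta v)\overline{v} = \N{\nabla v}^2_{L^2(D)} - k^2 \N{v}^2_{L^2(D)},
\end{equation*}
since $\Delta v = -k^2 v$ in $D$. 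The right-hand side is real, so $\mathrm{Im}\langle \partial v/\partial n, v\rangle_{\partial D} = 0$, and both expansions collapse to $\N{\partial v/\partial n}^2_{L^2(\partial D)} + k^2\N{v}^2_{L^2(\partial D)} = \|v\|^2_{1,k,\partial D}$, which is precisely \eqref{eq:norm2}.

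For the norm property, only definiteness is nontrivial. If $\|v\|_{1,k,\partial D} = 0$, then both $v$ and $\partial v/\partial n$ vanish on $\partial D$, so $v \in U_0(D)$ solves \eqref{IIP} with data $f = 0$ and $g = 0$; Lemma \ref{lem:IIPU} then forces $v \equiv 0$.

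Finally, for the equivalence with $\Vert \cdot \Vert_{U(D)}$ on star-shaped Lipschitz or $C^\infty$ domains, the bound $\|v\|_{1,k,\partial D} \leq \Vert v\Vert_{U(D)}$ is immediate from the definition \eqref{eq:normU}. Conversely, since $v \in U_0(D)$ solves \eqref{IIP} with $f = 0$ and $g = \partial v/\partial n - \ri k v$, Lemma \ref{lem:IIPU} gives $\Vert v\Vert_{U(D)} \lesssim C_2(k)\N{g}_{L^2(\partial D)}$, while the isometry \eqref{eq:norm2} just proven identifies $\N{g}_{L^2(\partial D)}$ with $\|v\|_{1,k,\partial D}$. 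Remark \ref{rem:kdep} supplies $C_2(k) \sim 1$ under either of the two geometric hypotheses, yielding the claimed $k$-independent equivalence. The only real subtlety is bookkeeping the signs in the expansion and in Green's identity; once that is done correctly, the remainder of the argument is essentially a repackaging of Lemma \ref{lem:IIPU}.
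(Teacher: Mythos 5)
Your proof is correct and follows essentially the same route as the paper's: Green's first identity to show $\Im\langle\partial v/\partial n, v\rangle_{\partial D}=0$, the algebraic expansion of $|\pm\partial v/\partial n - \ri k v|^2$ to obtain \eqref{eq:norm2}, uniqueness from Lemma \ref{lem:IIPU} for definiteness, and Lemma \ref{lem:IIPU} with Remark \ref{rem:kdep} for the norm equivalence. The only cosmetic differences are the order in which you perform the expansion versus Green's identity, and that you observe the forward inequality $\|v\|_{1,k,\partial D}\le\|v\|_{U(D)}$ directly from the definitions rather than via the intermediate triangle-inequality step the paper uses.
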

\begin{proof}  If $v \in U_0(D)$, then by Green's first identity (see, e.g., \cite[Lemma 4.3]{Mc:00}),
  $$ 0 = - \int_{D} (\Delta v + k^2 v) \overline{v} = \int_{D} \left(\vert \nabla v \vert^2 - k^2 \vert v \vert^2\right) \  -\  \int_{\partial D} \frac{\partial v}{\partial n} \overline{v}. $$
 Taking the imaginary part,  we have 
  \begin{align} \label{eq:norm1}
    \Im  \left( \int_{\partial D} \frac{\partial v}{\partial n}  \overline{ v} \right) = 0.  
  \end{align}
  Thus, 
  $$ \int_{\partial D} \left \vert \pm \frac{\partial v }{\partial n} - \ri k v \right \vert^2 = \int_{\partial D}
  \left(\left\vert \frac{\partial v }{\partial n}\right \vert^2 + k^2 \vert  v  \vert^2\right)    \mp 2k \,   \Im \left( \int_{\partial D}  \frac{\partial v }{\partial n} \overline{ v}\right) \ = \    \|v\|_{1,k,\partial D}^2, $$
  yielding \eqref{eq:norm2}.

To show \eqref{eq:norm2} is a norm, suppose  $\Vert v \Vert_{1,k,\partial D} = 0$. Then, by \eqref{eq:norm2},  $(\partial /\partial n - \ri k) v = 0$ on $\partial D$.
  Since $v\in U_0(D)$,  Lemma \ref{lem:IIPU} ensures that $v= 0$.  The other norm axioms  follow from the definition \eqref{eq:pseudo-energy}.

  To obtain the norm equivalence, observe that, for $v \in U_0(D)$,
  $$ \Vert v \Vert_{1,k,\partial D} = \left\Vert {\partial v }/{\partial n} - \ri k v \right \Vert_{L^2(\partial D)} \leq \left\Vert {\partial v }/{\partial n}\right \Vert_{L^2(\partial D)} + k \Vert v  \Vert_{L^2(\partial D)}  
  \leq \Vert v \Vert_{U(D)}. $$
  Moreover since $\partial v /\partial n$ and $v$ both belong to $L^2(\partial D)$,  Lemma \ref{lem:IIPU} implies that
  \begin{align*} 
    \Vert v \Vert_{U(D)} \ \leq \ {C}_2(k)\N{\partial v/\partial v - \ri k v }_{L^2(\partial D)} = C_2(k)
    \Vert v \Vert_{1,k,\partial D}.
  \end{align*}
The stated $k-$independence follows from  Remark \ref{rem:kdep}. 
   \end{proof}

 Using \eqref{eq:pseudo-energy}, we define the norm on  $\bbU_0$:   
   \begin{align}\label{eq:pseudo-energy1}
\|\bv\|_{1,k,\partial }^2 \ := \sum_{\ell=1}^N \|v_\ell \|_{1,k,\partial \Omega_\ell}^2 \quad \text{for} \quad \bv \in \bbU_0.
\end{align}

For simplicity, we now assume that each $\Omega_\ell$ is star-shaped Lipschitz, so that 
the norm equivalence in Lemma \ref{lm:orth-trace-helm} holds with constants independent of $k$.
 Analogues of the following results for general Lipschitz $\Omega_\ell$ hold, but with different $k$-dependence.

\begin{assumption}\label{ass:star}
 $\Omega_\ell$ is star-shaped Lipschitz.
\end{assumption}
Furthermore, to simplify the notation, we define the operator
     $$
      \imp_\ell \ := \ \left(\frac{\partial}{\partial n_\ell} - \ri k \right).
     $$
 The next theorem summarises some basic properties of the  operator $\cT_{j,\ell}$ on the space $U_0(\Omega_\ell)$.    
 \begin{theorem}[Properties of $\cT$]\label{main_T}   
If $v_\ell \in U_0(\Omega_\ell)$ then $    \imp_j (\cT_{j,\ell} v_\ell) $ \iggg{vanishes on  $\partial \Omega_j \backslash \Gamma_{j,\ell}$,  and} 
   \begin{align}\label{main_T1a}
    \imp_j (\cT_{j,\ell} v_\ell) \ = \ \chi_\ell \, \imp_j (v_\ell) \ +\  \pdiff{\chi_\ell}{n_j} v_\ell \quad \text{ on} \quad   \Gamma_{j,\ell}.  \ \end{align}
Also, 
   \begin{align} \label{main_T2}  \Vert \cT_{j,\ell} v_\ell \Vert_{1, k, \partial \Omega_j} \ & = \
     \Vert \imp_j (\cT_{j,\ell} v_\ell) \Vert_{L^2(\Gamma_{j,\ell})}  \nonumber \\ & \leq \  \left\Vert \imp_j (v_\ell)  \right\Vert_{L^2(\Gamma_{j,\ell})} \ + \ 
     k^{-1/2}  \Vert \nabla \chi_\ell \Vert_{L^\infty(\Gamma_{j,\ell})}\Vert v_\ell \Vert_{1, k, \partial \Omega_\ell} ,
     \end{align}
and $\cT_{j,\ell}: U_0(\Omega_\ell) \rightarrow U_0(\Omega_j)$ is a bounded operator.
\end{theorem}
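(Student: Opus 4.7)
The argument splits into two parts: first pin down the impedance trace of $\cT_{j,\ell}v_\ell$ on $\partial\Omega_j$ to obtain the vanishing claim and formula \eqref{main_T1a}, then deduce the norm inequality \eqref{main_T2} by invoking the isometry of Lemma \ref{lm:orth-trace-helm}.

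To characterise the trace, I would analyse \eqref{eq42}--\eqref{eq43} piece by piece. Equation \eqref{eq43} immediately gives $\imp_j(\cT_{j,\ell}v_\ell)=0$ on $\partial\Omega_j\cap\partial\Omega$, while \eqref{eq42} gives $\imp_j(\cT_{j,\ell}v_\ell)=\imp_j(\chi_\ell v_\ell)$ on $\partial\Omega_j\setminus\partial\Omega$. Because $\supp\chi_\ell\subset\overline{\Omega_\ell}$ and, under Assumption \ref{ass:POU}, the zero-extension of $\chi_\ell v_\ell$ lies in $U(\Omega)$ (by Lemma \ref{lem:map}), the trace $\imp_j(\chi_\ell v_\ell)$ vanishes on $\partial\Omega_j\setminus\overline{\Omega_\ell}$; taken together these two cases yield the vanishing on $\partial\Omega_j\setminus\Gamma_{j,\ell}$. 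On $\Gamma_{j,\ell}$, which lies in the interior of $\Omega_\ell$, I use $\chi_\ell\in C^{1,1}$ together with $v_\ell\in H^{3/2}(\Omega_\ell;\Delta)$ (via Theorem \ref{thm:H32}) to apply the Leibniz rule, producing \eqref{main_T1a}.

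For the norm bound, I would start from the identity $\cT_{j,\ell}v_\ell\in U_0(\Omega_j)$ (which holds by construction of \eqref{eq41}--\eqref{eq43}) and apply Lemma \ref{lm:orth-trace-helm} to get $\|\cT_{j,\ell}v_\ell\|_{1,k,\partial\Omega_j}=\|\imp_j\cT_{j,\ell}v_\ell\|_{L^2(\partial\Omega_j)}$, which by the vanishing collapses to $\|\imp_j\cT_{j,\ell}v_\ell\|_{L^2(\Gamma_{j,\ell})}$, yielding the equality in \eqref{main_T2}. Inserting \eqref{main_T1a}, the triangle inequality, and $|\chi_\ell|\leq 1$ together with $|\partial\chi_\ell/\partial n_j|\leq\|\nabla\chi_\ell\|_{L^\infty(\Gamma_{j,\ell})}$ then reduces the task to bounding $\|v_\ell\|_{L^2(\Gamma_{j,\ell})}$ by $k^{-1/2}\|v_\ell\|_{1,k,\partial\Omega_\ell}$. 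I would produce this via a standard multiplicative trace inequality on the Lipschitz subdomain $\Omega_j\cap\Omega_\ell$, namely $\|v_\ell\|_{L^2(\Gamma_{j,\ell})}^2\lesssim\|v_\ell\|_{L^2(\Omega_\ell)}\bigl(\|v_\ell\|_{L^2(\Omega_\ell)}+\|\nabla v_\ell\|_{L^2(\Omega_\ell)}\bigr)$, then use the $k$-explicit norm equivalence from Lemma \ref{lm:orth-trace-helm} (valid under Assumption \ref{ass:star}), which gives $k\|v_\ell\|_{L^2(\Omega_\ell)}+\|\nabla v_\ell\|_{L^2(\Omega_\ell)}\lesssim\|v_\ell\|_{1,k,\partial\Omega_\ell}$; combining these produces the desired $k^{-1/2}$ factor.

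Finally, for boundedness of $\cT_{j,\ell}\colon U_0(\Omega_\ell)\to U_0(\Omega_j)$, the only remaining step is a bound on $\|\imp_j v_\ell\|_{L^2(\Gamma_{j,\ell})}$ in terms of $\|v_\ell\|_{1,k,\partial\Omega_\ell}$, which follows from Lemma \ref{lem:Friday}(ii) applied with $D=\Omega_\ell$ and $D'=\Omega_j\cap\Omega_\ell$, together with Lemma \ref{lm:orth-trace-helm}. The main obstacle is the careful extraction of the $k^{-1/2}$ factor via the multiplicative trace inequality; the rest is assembly of the defining relations \eqref{eq41}--\eqref{eq43} with the earlier lemmas, combined with Assumption \ref{ass:star} to keep the equivalence constants $k$-independent.
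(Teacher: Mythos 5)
Your proposal is correct and follows essentially the same route as the paper's proof: extract the impedance trace from the defining relations, apply the Leibniz rule for \eqref{main_T1a}, use Lemma \ref{lm:orth-trace-helm} for the equality in \eqref{main_T2}, bound $\|v_\ell\|_{L^2(\Gamma_{j,\ell})}$ via the multiplicative trace inequality on $\Omega_\ell$ combined with the $k$-explicit equivalence of $\|\cdot\|_{1,k,\partial\Omega_\ell}$ and $\|\cdot\|_{U(\Omega_\ell)}$, and invoke Lemma \ref{lem:Friday}(ii) for boundedness. The only cosmetic difference is that you make the vanishing step more explicit by appealing to Lemma \ref{lem:map}, whereas the paper treats it as immediate from the support of $\chi_\ell$.
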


\begin{proof}   By  its definition \eqref{eq41}--\eqref{eq43},
  $\cT_{j,\ell} v_\ell \in U_0(\Omega_j)$ \iggg{and, on $\partial \Omega_j$,} 
  \begin{align} \label{eq:partial1}   \imp_j (\cT_{j,\ell} v_\ell ) 
  \ =\ \imp_j (\chi_\ell v_\ell )
   \  =\  \left(\frac{\partial} { \partial n_j}  - \ri k \right) (\chi_\ell v_\ell ),\end{align}    
which, recalling \eqref{defGamma},   vanishes on   $\partial \Omega_j \backslash \Gamma_{j,\ell}$.  
 Differentiating the product on the right-hand side of  \eqref{eq:partial1}  yields \eqref{main_T1a}.
  Then, taking norms of both sides of \eqref{main_T1a} and using Assumption \ref{ass:POU} and the fact that $0 \leq \chi_\ell \leq 1$, 
we obtain
  \begin{align}\label{main_T4} \Vert \imp_j (\cT_{j,\ell} v_\ell)\Vert_{L^2(\Gamma_{j,\ell}) }  \ \leq  \ \Vert \imp_j (v_\ell)
    \Vert_{L^2(\Gamma_{j,\ell})} \ +\  {\Vert \nabla \chi_\ell \Vert_{L^\infty(\Gamma_{j,\ell})}} \Vert v_\ell \Vert_{L^2(\Gamma_{j,\ell})}.   \ \end{align}
  Then, using the fact that $\Gamma_{j,\ell}\subset \Omega_\ell$ \ig{is an interface in $\Omega_\ell$},
using the multiplicative trace theorem and then Lemma \ref{lm:orth-trace-helm}, we obtain 
 \begin{align} \nonumber  k^{1/2} \Vert v_\ell \Vert_{L^2(\Gamma_{j,\ell})} \ & \lesssim \  
                k^{1/2} \Vert v_\ell \Vert_{L^2(\Omega_\ell)}^{1/2}\Vert v_\ell\Vert_{H^1(\Omega_\ell)}^{1/2} \\  & \lesssim\ 
                k  \Vert v_\ell \Vert_{L^2(\Omega_\ell)} + \Vert v_\ell \Vert_{H^1(\Omega_\ell)}
                \lesssim  \Vert v_\ell \Vert_{1,k,\Omega_\ell}  \lesssim  \Vert v_\ell \Vert_{1,k,\partial \Omega_\ell} \ . \end{align}
  Combining this with \eqref{main_T4} yields \eqref{main_T2}.
Finally, to obtain the boundedness of $\cT_{j,\ell}: U_0(\Omega_\ell) \to U_0(\Omega_j)$, we use Lemma \ref{lem:Friday} (ii),  to obtain
  $$ \Vert \imp_j (v_\ell) \Vert_{L^2(\Gamma_{j,\ell})} 
  \ {\lesssim \ k^2 \Vert \imp_\ell (v_\ell)\Vert_{L^2(\partial \Omega_\ell)}}
  \ {=}\  k^2   \Vert v_\ell \Vert_{1,k,\partial \Omega_\ell},$$
  and we then combine this with  \eqref{main_T2}.
\end{proof}

\begin{remark}  {The same arguments}  show that $\cT_{j,\ell}: U(\Omega_\ell) \rightarrow U_0(\Omega_j)$ is bounded. \end{remark} 

In the following section we are interested in proving power contractivity of the error propagation operator $\cT$. 
This motivates us to study the composition $\cT_{j,\ell} \cT_{\ell, j'}$; indeed,
\begin{align} \label{eq:square} (\cT^2)_{j,j'} \ = \ \sum_\ell \cT_{j,\ell} \cT_{\ell, j'},  \end{align}
where the sum is over all $\ell \in  \{1, 2, \ldots , N \} \backslash \{ j,j'\}$, with  $\Gamma_{j,\ell} \not = \emptyset \not= \Gamma_{\ell,j'}$. \iggg{A useful expression  for the action of   \eqref{eq:square} can be obtained by  inserting $v_\ell = \cT_{\ell,j'} z_{j'}$,  with $z_{j'} \in U(\Omega_{j'})$,  into \eqref{main_T1a},  to obtain}      
   \begin{align}\label{main_T1}
    \imp_j (\cT_{j,\ell} \cT_{\ell,j'} z_{j'}) \ = \ \chi_\ell \, \imp_j (\cT_{\ell,j'} z_{j'}) \ +\  \left(\pdiff{\chi_\ell}{n_j}\right) \, \left(\cT_{\ell,j'} z_{j'}\right) \quad \text{ on} \quad   \Gamma_{j,\ell}.
    \end{align}
The first term on the right-hand side of \eqref{main_T1} is of key interest in this paper. We note that its value is obtained by (i) finding $\cT_{\ell, j'} {z_{j'}}$, i.e.,  the unique function  in $U_0(\Omega_\ell)$ with impedance data on $\Gamma_{\ell, j'}$ given by $\imp_\ell(\chi_{j'} z_{j'})$ ;  (ii) evaluating $\imp_j(\cT_{\ell,j'}z_{j'})$    on $\Gamma_{j, \ell}$  and (iii) then multiplying the result by $\chi_\ell$. Combining  steps (i) and (ii) leads us to the following key definition.
    
      \subsection{ The impedance-to-impedance map}
     \label{subsec:relation}

\begin{definition}[Impedance map] \label{def:impmap}
Let $\ell, j, j' \in \{ 1, \ldots, N\}$
  be such that $\Gamma_{\ell, j'}  \not =\emptyset$  and $\Gamma_{j,\ell}  \not =\emptyset  $ (or, equivalently, $\Omega_\ell\cap\Omega_{j'} \neq \emptyset$ and $\Omega_\ell\cap\Omega_{j} \neq \emptyset$).
    Given $g \in L^2(\Gamma_{\ell,j'})$, let $v_\ell$ be the unique element of
    $U_0(\Omega_\ell)$ with impedance data 
\begin{align}  
   \imp_\ell (v_\ell) & = \left\{ \begin{array}{ll} g & \text{on} \quad \Gamma_{\ell,j'} \\  0 & 
  \text{on} \quad \partial \Omega_\ell \backslash \Gamma_{\ell,j'}  \end{array} \right. . \label{H2} \end{align}
Then the impedance-to-impedance map $ \Imap{\ell}{j'}{j}{\ell}: L^2(\Gamma_{\ell,j'})
    \rightarrow L^2(\Gamma_{j,\ell})$ is defined by 
\begin{align}   \Imap{\ell}{j'}{j}{\ell} g := \imp_j(v_\ell), \quad \text{on} \quad \Gamma_{j,\ell} \, ,     \label{H3}
 \end{align}   
i.e., $\Imap{\ell}{j'}{j}{\ell} g $ is the impedance data on $\Gamma_{j,\ell} = \partial \Omega_j \cap \Omega_\ell$ 
of the  Helmholtz-harmonic function  on $\Omega_\ell$   with given impedance data \eqref{H2}.
  \end{definition}
  \begin{figure}[H]
    \centering
    \begin{subfigure}[t]{0.33\textwidth}
        \centering
      \includegraphics[width=1.1\textwidth]{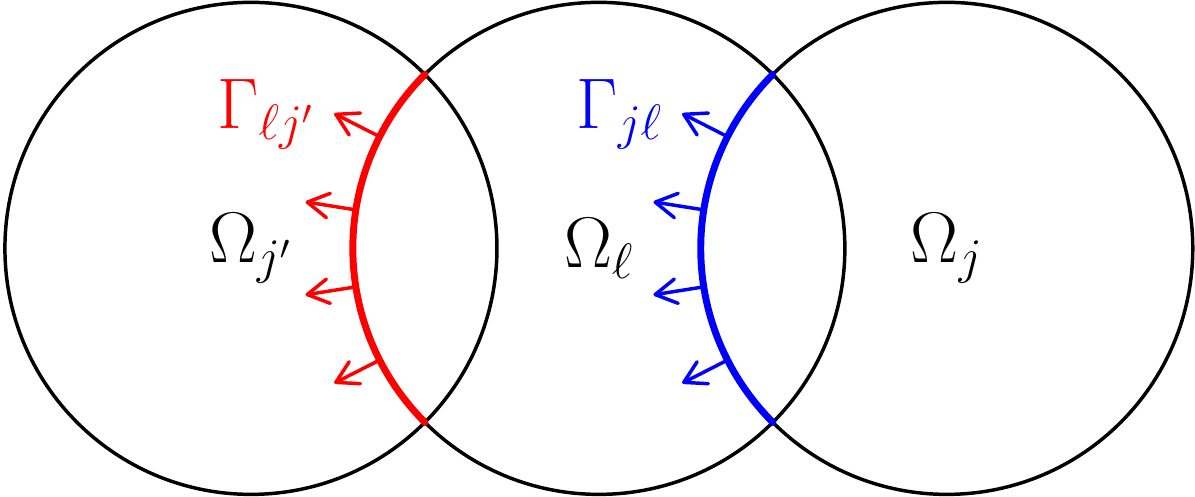}
        \caption{$\Omega_j \cap \Omega_{j'} = \emptyset$  }
         \end{subfigure}%
     \hfill
    \begin{subfigure}[t]{0.33\textwidth}
        \centering
        \includegraphics[width=.75\textwidth]{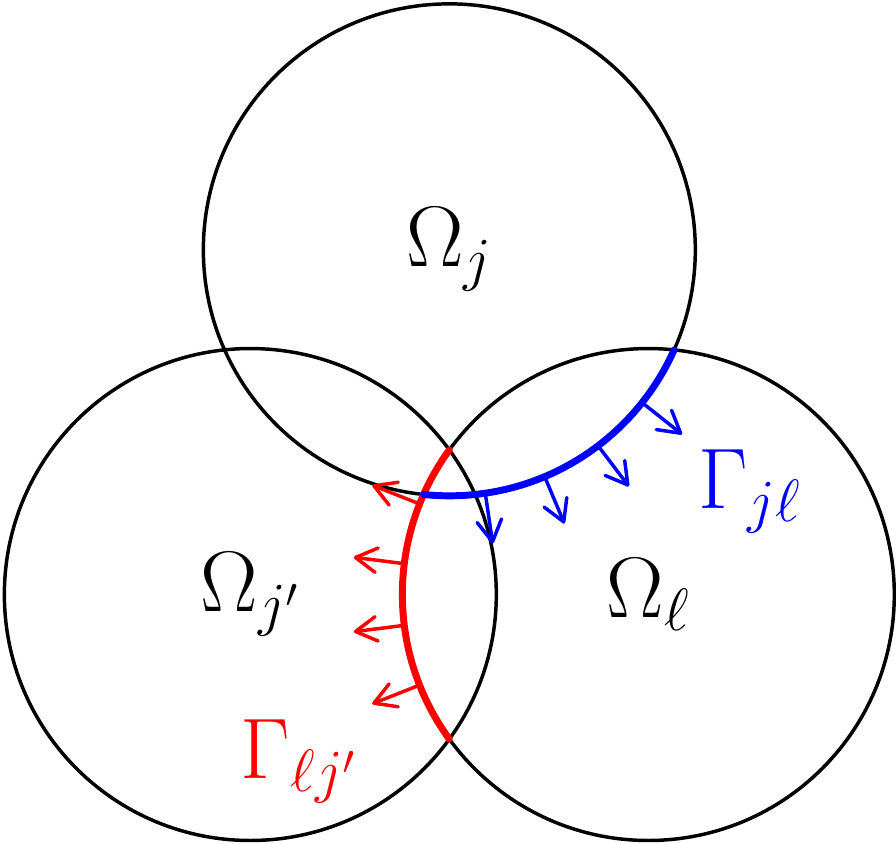}
        \caption{$\Omega_j \cap \Omega_{j'} \not = \emptyset$}
        \end{subfigure}%
           \begin{subfigure}[t]{0.33\textwidth}
        \centering
        \includegraphics[width=.95\textwidth]{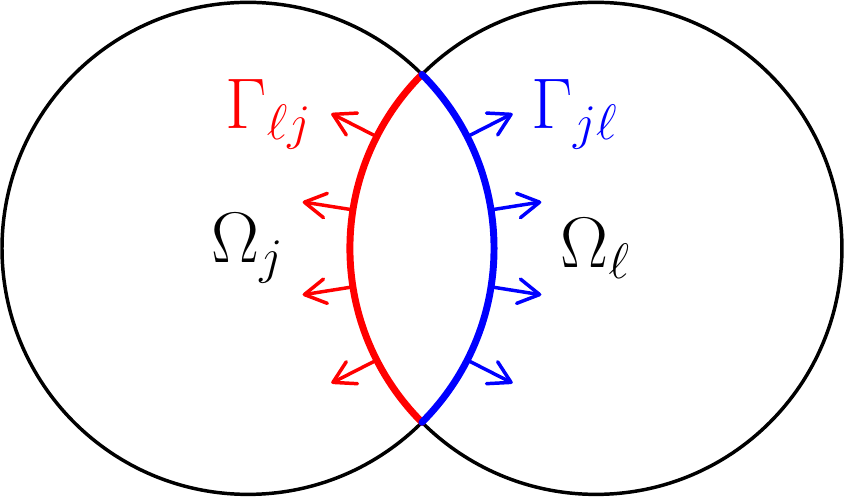}
        \caption{$\Omega_j = \Omega_{j'} $}
         \end{subfigure}%
           \caption{Illustrations of the domain (red) and co-domain (blue) of $\Imap{\ell}{j'}{j}{\ell}$   \ig{in 2d}}\label{fig:illustrations}
\end{figure}

\noindent Illustrations of the domain (in red) and co-domain (in blue) of the impedance-to-impedance map,  indicating the direction of the normal derivative,  are given in Fig.  \ref{fig:illustrations}. The next lemma shows its  $L^2-$boundedness. 
  \ble \label{lem:Ibounded}
$\Imap{\ell}{j'}{j}{\ell} : L^2(\Gamma_{\ell,j'})\rightarrow L^2(\Gamma_{j,\ell}) $ is bounded. 
\ele

\noindent \emph{Proof.} 
By \eqref{H3}, Lemma \ref{lem:Friday} (ii), Assumption \ref{ass:star}, and \eqref{H2},
$$
 \Vert  \Imap{\ell}{j'}{j}{\ell} g\Vert_{L^2(\Gamma_{j,\ell})} \ = \ \left\Vert \imp_j(v_\ell) \right\Vert_{L^2(\Gamma_{j,\ell})}  \lesssim  k^2  \left\Vert \imp_\ell( v_\ell) \right\Vert_{L^2(\partial \Omega_\ell )} = k^2 \Vert g\Vert_{L^2(\Gamma_{\ell,j'})}.   \quad \quad \quad \quad \quad \qed
$$

Although the proof of Lemma \ref{lem:Ibounded} gives a $k$-explicit bound on $\|\Imap{\ell}{j'}{j}{\ell}\|$, 
we obtain sharper $k$-explicit  information in certain set-ups later.
We now rewrite \eqref{main_T1} using this map.  

\begin{theorem}[Connection between $\cT^2$ and impedance-to-impedance map] \label{thm:main_T2} 
Let $\ell, j, j' \in \{ 1, \ldots, N\}$
  be such that $\Gamma_{\ell, j'}  \not =\emptyset$  and $\Gamma_{j,\ell}  \not =\emptyset  $ (or, equivalently, $\Omega_\ell\cap\Omega_{j'} \neq \emptyset$ and $\Omega_\ell\cap\Omega_{j} \neq \emptyset$).
If $z_{j'} \in U(\Omega_{j'})$, then
  \begin{align}
  \imp_j (\cT_{j,\ell} \cT_{\ell,j'} z_{j'} ) = \chi_\ell \, \Imap{\ell}{j'}{j}{\ell}   \left(\imp_\ell (\cT_{\ell,j'} z_{j'})\right) \ + \ \left(\pdiff{\chi_\ell}{n_j}\right)  \, \left(\cT_{\ell,j'} z_{j'} \right) \quad\ton \Gamma_{j,\ell}
\label{eq:impit}    \end{align}
  \begin{align} \label{cor_T21} \text{and} \quad \quad \Vert \cT_{j,\ell} \cT_{\ell, j'} z_{j'} \Vert_{1, k, \partial \Omega_j} \ \leq \ \left(  \Vert \Imap{\ell}{j'}{j}{\ell}   \Vert + 
     k^{-1/2} \Vert \nabla \chi_\ell \Vert_{L^\infty(\Gamma_{j,\ell})}\right) \Vert \cT_{\ell,j'} z_{j'}  \Vert_{1, k, \partial  \Omega_\ell} .
  \end{align}
\end{theorem}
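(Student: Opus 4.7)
The plan is to deduce \eqref{eq:impit} directly from identity \eqref{main_T1} already established in the proof of Theorem \ref{main_T}, by reading the first term on its right-hand side through Definition \ref{def:impmap}. Specifically, with $v_\ell = \cT_{\ell,j'} z_{j'}$, equation \eqref{main_T1} gives
\begin{equation*}
\imp_j (\cT_{j,\ell} \cT_{\ell,j'} z_{j'})
= \chi_\ell \, \imp_j (\cT_{\ell,j'} z_{j'})
+ \left(\pdiff{\chi_\ell}{n_j}\right) (\cT_{\ell,j'} z_{j'}) \quad \text{on } \Gamma_{j,\ell},
\end{equation*}
so the task reduces to recognizing $\imp_j(\cT_{\ell,j'} z_{j'})\big|_{\Gamma_{j,\ell}}$ as $\Imap{\ell}{j'}{j}{\ell}\bigl(\imp_\ell(\cT_{\ell,j'} z_{j'})\bigr)$.

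The step I expect to require the most care is the verification that $\cT_{\ell,j'} z_{j'}$ meets the hypotheses of Definition \ref{def:impmap}, i.e.\ that its impedance trace on $\partial\Omega_\ell$ is supported in $\Gamma_{\ell,j'}$. Helmholtz-harmonicity on $\Omega_\ell$ is immediate from \eqref{eq41}, and by the remark after Theorem \ref{main_T} one has $\cT_{\ell,j'} z_{j'} \in U_0(\Omega_\ell)$. On $\partial\Omega_\ell\cap\partial\Omega$ the impedance trace vanishes by \eqref{eq43}, while on $\partial\Omega_\ell\setminus\partial\Omega$ it equals $\imp_\ell(\chi_{j'} z_{j'})$ by \eqref{eq42}, which vanishes outside $\supp\chi_{j'}\cap\partial\Omega_\ell\subset \overline{\Omega_{j'}}\cap\partial\Omega_\ell$; since $\Gamma_{\ell,j'}=\partial\Omega_\ell\cap\Omega_{j'}$ agrees with this set up to the lower-dimensional piece $\partial\Omega_\ell\cap\partial\Omega_{j'}$, the trace is indeed supported on $\Gamma_{\ell,j'}$. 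Uniqueness in Lemma \ref{lem:IIPU} then identifies $\cT_{\ell,j'} z_{j'}$ with the $v_\ell$ in Definition \ref{def:impmap} for $g:=\imp_\ell(\cT_{\ell,j'} z_{j'})|_{\Gamma_{\ell,j'}}$, yielding \eqref{eq:impit}.

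For the bound \eqref{cor_T21}, by Lemma \ref{lm:orth-trace-helm} and the fact that $\imp_j(\cT_{j,\ell}\cdot)$ vanishes on $\partial\Omega_j\setminus\Gamma_{j,\ell}$ (as recorded in Theorem \ref{main_T}),
\begin{equation*}
\|\cT_{j,\ell} \cT_{\ell,j'} z_{j'}\|_{1,k,\partial\Omega_j}
= \|\imp_j(\cT_{j,\ell}\cT_{\ell,j'} z_{j'})\|_{L^2(\Gamma_{j,\ell})}.
\end{equation*}
Taking $L^2(\Gamma_{j,\ell})$ norms in \eqref{eq:impit} and applying the triangle inequality, I would use $0\le\chi_\ell\le 1$ and the operator norm of the impedance-to-impedance map to bound the first term by $\|\Imap{\ell}{j'}{j}{\ell}\|\,\|\imp_\ell(\cT_{\ell,j'}z_{j'})\|_{L^2(\Gamma_{\ell,j'})}\le \|\Imap{\ell}{j'}{j}{\ell}\|\,\|\cT_{\ell,j'}z_{j'}\|_{1,k,\partial\Omega_\ell}$. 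For the second term, I would re-use the multiplicative trace argument from the proof of Theorem \ref{main_T} (which produced the factor $k^{-1/2}$) applied to $\cT_{\ell,j'} z_{j'}\in U_0(\Omega_\ell)$, combined with the norm equivalence of Lemma \ref{lm:orth-trace-helm}. Combining these two estimates gives \eqref{cor_T21}, completing the proof.
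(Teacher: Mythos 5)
Your proof is correct and follows essentially the same route as the paper's: specialise \eqref{main_T1} to $v_\ell=\cT_{\ell,j'}z_{j'}$, observe that its impedance trace is supported on $\Gamma_{\ell,j'}$ so the first term can be read through Definition~\ref{def:impmap}, and then rerun the estimate that produced \eqref{main_T2}. The only difference is cosmetic: you spell out the support verification that the paper simply asserts, which is a fine (and slightly more careful) way to write the same argument.
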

\begin{proof}
  Since $\cT_{\ell, j'} z_{j'} \in U_0(\Omega_{\ell})$ and $\imp_\ell (\cT_{\ell,j'} z_{j'} ) $ vanishes on $\partial \Omega_\ell \backslash \Gamma_{\ell,j'}$,  we have
  $$ \chi_\ell \, \imp_j (\cT_{\ell, j'} z_{j'} ) = \ig{\chi_\ell}\,  \cI_{\Gamma_{\ell, j'} \rightarrow \Gamma_{j,\ell}} (\imp_\ell (\cT_{\ell,j'} z_{j'} ) ). $$ \ig{Substituting this in \eqref{main_T1}   gives \eqref{eq:impit}. The estimate \eqref{cor_T21} is obtained by following the proof of  \eqref{main_T2}, with $v_\ell = \cT_{\ell, j'} z_{j'}$.}  
 \end{proof} 
We now see from \eqref{cor_T21} that (at least for sufficiently large $k$ and/or sufficiently small $\Vert \nabla \chi_\ell \Vert_{L^\infty(\Gamma_{j,\ell})}$) 
the right-hand side of \eqref{eq:impit} is dominated by the first term. The norm of the impedance-to-impedance map  lies at the heart of the convergence theory in \S \ref{sec:conv}.  
 
     \section{Convergence  of the iterative method for strip decompositions}
\label{sec:conv}

In this section we obtain a convergence theory for the iterative method \eqref{eq21} -- \eqref{star} when the domain $\Omega$ is either an interval in 1-d or a rectangle in 2-d. In 1-d the subdomains are intervals and in 2-d the subdomains are sub-rectangles. 

\subsection{Notation common to both 1-d and 2-d}\label{sec:notation}

\begin{notation}[Strip decompositions in 1- and 2-d]\label{def:geom}
    In 1-d the subdomains are denoted by $\Omega_\ell = (\Gamma_\ell^-, \Gamma_\ell^+)$. In 2-d we assume the domain $\Omega$ is a rectangle of height $H$ and the  subdomains $\Omega_\ell$ also have height $H$ and are  bounded by vertical sides denoted $\Gamma_\ell^-$, $\Gamma_\ell^+$.
    In both 1-d and 2-d we  assume each  $\Omega_\ell$ is only overlapped by $\Omega_{\ell-1}$ and $\Omega_{\ell+1}$ (with   $\Omega_{-1} := \emptyset$ and $\Omega_{N+1}  :=\emptyset$).
    The   width of $\Omega_\ell$ is denoted   $L_\ell$.
    This  notation is illustrated  in Figures \ref{fig:overlap1d} and \ref{fig:overlap}.
\end{notation}

\begin{remark}\label{rem:specialPOU}      (i) The simpler notation in this section is linked to the general notation  \eqref{defGamma} via 
 \begin{align} \label{notstrip} \Gamma_\ell^-  = \Gamma_{\ell, \ell-1}  \quad \text{and}\quad \Gamma_\ell^+ = \Gamma_{\ell, \ell+1} .\end{align} 
  (ii) Under this set-up,  any  partititon of unity $\{\chi_\ell\}$
        defined in \eqref{POUstar} satisfies
        \begin{align} \label{eq:important} 
          \chi_{\ell}\vert_{\Gamma_{\ell-1}^+} = 1 = \chi_\ell\vert_{\Gamma_{\ell+1}^-} .  \end{align}
      \end{remark}

\begin{figure}[H]
  \begin{center}
    \includegraphics{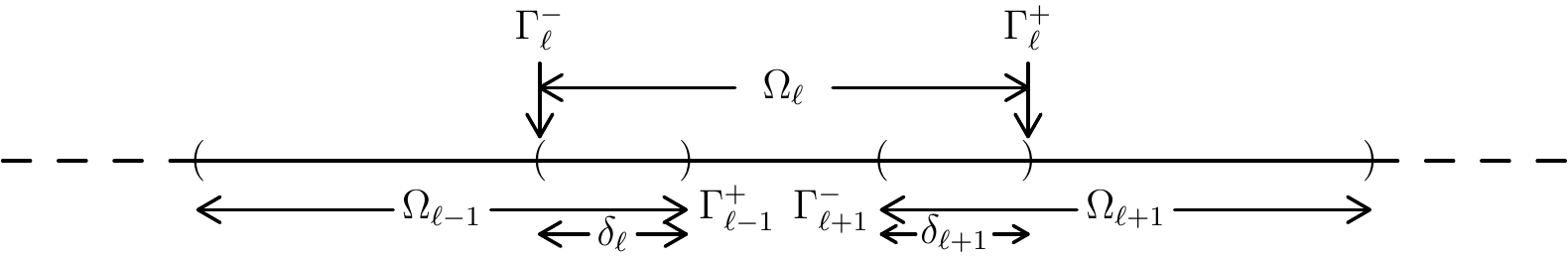}
\end{center}   \caption{Three overlapping subdomains in 1-d \label{fig:overlap1d}}
\end{figure}
\begin{figure}[H]
  \begin{center}
  \includegraphics{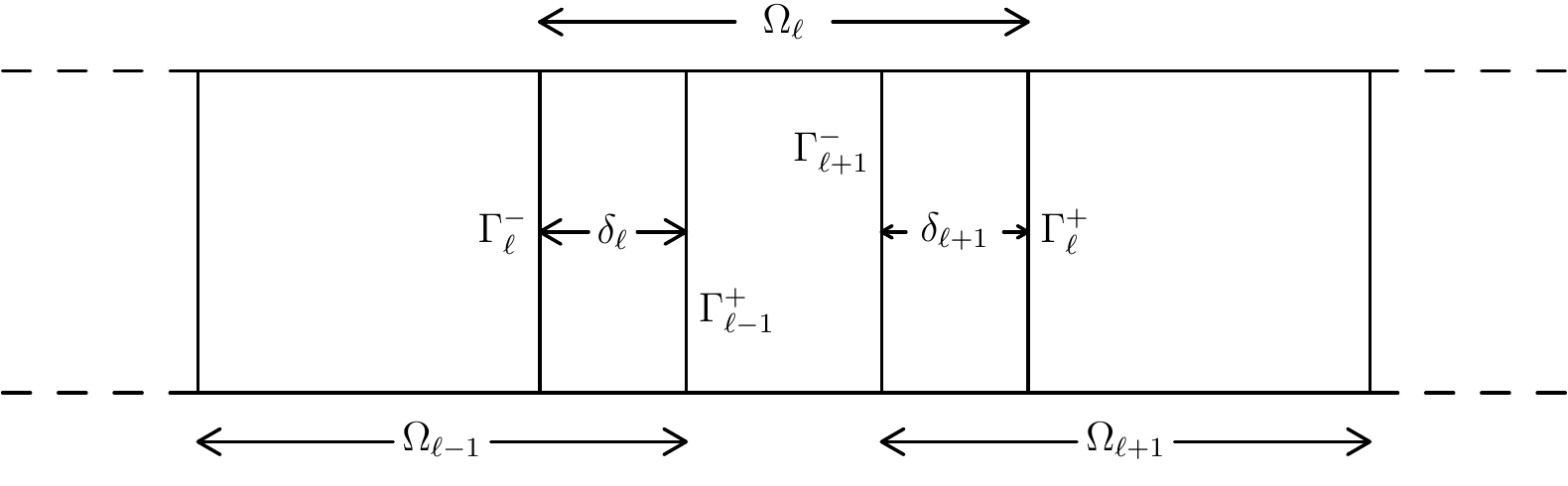}
\end{center}   \caption{Three overlapping subdomains in 2-d \label{fig:overlap}}
\end{figure}
To illustrate these definitions, 
    given $g \in L^2(\Gamma_\ell^-)$,  let     $u$ denote  the Helmholtz-harmonic function on $\Omega_\ell$ with left-facing impedance data $g$ on $\Gamma_\ell^-$ and zero impedance data, elsewhere. Then         $ \Imaps{\ell}{-}{\ell-1}{+}g $ is the right-facing impedance data of $u$  on $\Gamma_{\ell-1}^+$
    and      $ \Imaps{\ell}{-}{\ell+1}{-}g $  is the left-facing impedance data of $u$ on  $\Gamma_{\ell+1}^-$. Note that $\Gamma_{\ell-1}^+$ and $\Gamma_{\ell+1}^{-}$ are both interior interfaces in $\Omega_\ell$ (see Figure  \ref{fig:overlap}). 
     
Recall from Remark \ref{rem:null} that
$\cT_{\ell,\ell} = 0$ and 
$  \cT_{j,\ell} = 0$ {if } $\Omega_j \cap \Omega_\ell =\emptyset.
$
Therefore, $\bcT$ takes the block tridiagonal  form

\begin{equation}\label{eq:splitT}
\bcT =\begin{pmatrix}
0		&	\cT_{1,2} \\
\cT_{2,1}	&	0		&\cT_{2,3}\\
		&	\cT_{3,2}	&0		&\cT_{3,4}\\
		&			&	\ddots	&	\ddots&	\ddots\\
		&			&	\cT_{N-1,N-2}&		0	&\cT_{N-1,N}\\
		&			&			&	\cT_{N,N-1}&0	
\end{pmatrix} :  = \bcL + \bcU\, , 
\end{equation}
where $\bcL$ and $\bcU$ are the lower and upper triangular components of $\bcT$.
We record for later that, by the Cayley-Hamilton theorem, 
  \begin{align} \label{eq:nilpotent} \bcL^N \ = \ \mathbf{0} \ = \ \bcU^N. \end{align}
In what follows a crucial role is played by the products:
\begin{align} 
\resizebox{0.9\hsize}{!}{$
\bcL\bcU = \begin{pmatrix}
0 \\
&	\cT_{2,1}\cT_{1,2}\\
&	&	&\ddots\\
&	&	&	&\cT_{N-1,N-2}\cT_{N-2,N-1}\\
&	&	&	&	&\cT_{N,N-1}\cT_{N-1,N}
\end{pmatrix}
~~ \text{and} ~~~ 
\bcU\bcL = \begin{pmatrix}
\cT_{1,2}\cT_{2,1} \\
&	\cT_{2,3}\cT_{3,2}\\
&	&	&\ddots\\
&	&	&	&\cT_{N-1,N}\cT_{N,N-1}\\
&	&	&	&	&0
\end{pmatrix}
$}.
\label{products} 
\end{align}
 We remark  that in 2-d the structure \eqref{eq:splitT} remains the same
     if the vertical interfaces in $\Gamma_\ell^{\pm}$ are replaced by non-intersecting polygonal pieces; however, we do not pursue this generalisation here.    The  diagonal entries in \eqref{products} can be estimated in terms of impedance-to-impedance maps - see 
  Theorem \ref{thm:main_T2}.

\subsection{The results of \S\ref{sec:fixedpoint} specialised to strip decompositions}

Since strip decompositions have the property \eqref{eq:important}, Theorem \ref{thm:main_T2} simplifies to the following.

\begin{corollary}\label{cor:main_T2strip} 
Let $\ell \in \{ 1, \ldots, N\}$ and $j,j' \in \{\ell-1,\ell+1\}$. 
If $z_{j'} \in U(\Omega_{j'})$, then
  \begin{align}
\imp_j (\cT_{j,\ell} \cT_{\ell,j'} z_{j'} ) = \Imap{\ell}{j'}{j}{\ell} \,    \left(\imp_\ell (\cT_{\ell,j'} z_{j'})\right)  \quad\ton \Gamma_{j,\ell}.
  \label{eq:impit2new}    
\end{align}
  \begin{align} \label{cor_T21new} \text{and thus} \quad \quad \Vert \cT_{j,\ell} \cT_{\ell, j'} z_{j'} \Vert_{1, k, \partial \Omega_j} \ \leq \ 
   \Vert \Imap{\ell}{j'}{j}{\ell}   \Vert  \Vert \cT_{\ell,j'} z_{j'}  \Vert_{1, k, \partial  \Omega_\ell} .
  \end{align}
\end{corollary}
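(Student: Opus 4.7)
The plan is to derive the corollary by specializing the general identity \eqref{eq:impit} from Theorem \ref{thm:main_T2} to the strip geometry. Comparing \eqref{eq:impit} with the target \eqref{eq:impit2new}, two things have to happen on $\Gamma_{j,\ell}$: the factor $\chi_\ell$ that multiplies $\Imap{\ell}{j'}{j}{\ell}$ must become $1$, and the additive term $(\partial \chi_\ell / \partial n_j)(\cT_{\ell,j'} z_{j'})$ must vanish. Once these are in place, \eqref{eq:impit2new} is immediate, and the bound \eqref{cor_T21new} drops out of \eqref{cor_T21} since the coefficient $\|\nabla \chi_\ell\|_{L^\infty(\Gamma_{j,\ell})}$ appearing there will turn out to be zero.

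The first point is precisely the content of Remark \ref{rem:specialPOU}(ii): by \eqref{notstrip}, when $j \in \{\ell-1,\ell+1\}$ the interface $\Gamma_{j,\ell}$ coincides with either $\Gamma_{\ell-1}^+$ or $\Gamma_{\ell+1}^-$, and \eqref{eq:important} gives $\chi_\ell \equiv 1$ there.

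For the second point, the key step is to show that $\nabla \chi_\ell = 0$ on $\Gamma_{j,\ell}$. In the strip setup, the interfaces $\Gamma_{\ell-1}^+$ and $\Gamma_{\ell+1}^-$ sit strictly in the interior of $\Omega_\ell$, because consecutive subdomains have a strip of positive overlap. The POU property \eqref{POUstar} gives $\chi_\ell \leq 1$ everywhere on $\Omega_\ell$, while \eqref{eq:important} gives $\chi_\ell = 1$ at every point of $\Gamma_{j,\ell}$; hence each point of $\Gamma_{j,\ell}$ is an interior local maximum of $\chi_\ell$. Since Assumption \ref{ass:POU} gives $\chi_\ell \in C^{1,1}(\Omega_\ell) \subset C^1(\Omega_\ell)$, the first-order condition at an interior maximum forces $\nabla \chi_\ell = 0$ on $\Gamma_{j,\ell}$; in particular $\partial \chi_\ell/\partial n_j = 0$ there.

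Plugging $\chi_\ell = 1$ and $\partial \chi_\ell/\partial n_j = 0$ into \eqref{eq:impit} then yields \eqref{eq:impit2new}. For the norm bound, one can either rerun the proof of \eqref{cor_T21} with the same two simplifications, or simply observe that the term multiplied by $\|\nabla \chi_\ell\|_{L^\infty(\Gamma_{j,\ell})}$ in \eqref{cor_T21} is now zero, leaving \eqref{cor_T21new}. I do not expect a real obstacle; the only subtlety worth flagging is the need to verify that $\Gamma_{j,\ell}$ genuinely lies in the \emph{open} interior of $\Omega_\ell$ (so that the first-order maximum condition applies), but this is built into the standing strip-decomposition hypothesis that each subdomain properly overlaps its immediate neighbours.
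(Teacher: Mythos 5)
Your proof is correct and follows essentially the same route as the paper: specialize the strip geometry so that $\chi_\ell\equiv 1$ on $\Gamma_{j,\ell}$ (Remark \ref{rem:specialPOU}(ii)) and $\nabla\chi_\ell=0$ there, and then read off \eqref{eq:impit2new} and \eqref{cor_T21new} from \eqref{eq:impit} and \eqref{cor_T21}. Your interior-maximum argument is a clean way of making explicit why $\partial\chi_\ell/\partial n_j$ vanishes on $\Gamma_{j,\ell}$, a point the paper's proof states rather tersely (and in which $\Gamma_\ell^+$ appears to be a typo for $\Gamma_{\ell-1}^+$).
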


\bpf
To obtain \eqref{eq:impit2new}, without loss of generality,  consider  the case $j= \ell-1 = j'$. Then, for any $v_\ell \in U_0(\Omega_\ell)$,  \begin{align} \label{Martin}  \imp_{\ell-1} (\cT_{\ell-1,\ell} \, v_\ell) = \imp_{\ell-1} (\chi_\ell v_\ell) = \chi_\ell \imp_{\ell-1} (v_\ell)  = \imp_{\ell-1}(v_\ell) \quad \text{on} \quad \Gamma_{\ell-1,\ell} .
\end{align}
In \eqref{Martin}, the first equality comes from the definition of $\cT_{\ell-1,\ell}$, the second comes from the fact that (by Assumption \ref{ass:POU}),  $(\partial \chi_\ell / \partial n_{\ell-1}) = 0 $ on $\Gamma_\ell^+$  and   the final equality comes from the fact  (see \eqref{notstrip} and \eqref{eq:important}) that $\chi_\ell \equiv 1$ on $ \Gamma_{\ell-1,\ell} =
  \partial \Omega_{\ell-1} \cap \Omega_\ell = \Gamma_{\ell-1}^+ $.   Using this instead of  \eqref{main_T1a}  and propagating this simplification through the arguments using to prove Theorems \ref{main_T} and  \ref{thm:main_T2} gives the result.
\epf

\subsection{One dimension}\label{subsec:1d} 

The following result is known from \cite[Propositions 2.5 and 2.6]{NaRoSt:94} \ig{(restricted to 1-d)}, 
but we state it here in our notation, because it helps motivate our approach in the 2-d case.  

\begin{lemma}\label{lm:TtoI1D}
In 1-d,
\begin{equation}
\label{1d1} \Imaps{\ell}{-}{\ell-1}{+}=\Imaps{\ell}{+}{\ell+1}{-}  = 0,
\end{equation}
and
\ig{
\begin{equation}\label{1d2} 
  \vert\Imaps{\ell}{+}{\ell-1}{+} g \vert = \vert g \vert  \quad \text{and} \quad \vert
  \Imaps{\ell}{-}{\ell+1}{-} g  \vert   = \vert g \vert, \quad \text{for all} \quad  g \in \mathbb{C}.    
\end{equation}
}
Moreover
\begin{align}\label{1d3}
  \bcU \bcL = \boldsymbol{0} = \bcL \bcU. \end{align} \end{lemma}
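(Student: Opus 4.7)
The plan is to reduce everything to explicit calculations with the two plane-wave solutions of the 1-d Helmholtz equation, $e^{\ri kx}$ and $e^{-\ri kx}$, and then deduce \eqref{1d3} from \eqref{1d1} via Corollary \ref{cor:main_T2strip} and uniqueness for the interior impedance problem.

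First I would fix $\Omega_\ell = (a,b)$ with $a = \Gamma_\ell^-$, $b = \Gamma_\ell^+$, and write a general $v_\ell \in U_0(\Omega_\ell)$ as $v_\ell(x) = A e^{\ri kx} + B e^{-\ri kx}$. The outward unit normal on $\partial \Omega_\ell$ is $-1$ at $a$ and $+1$ at $b$, so a direct computation yields $\imp_\ell(e^{\ri k x})(a) = -2\ri k e^{\ri k a}$ and $\imp_\ell(e^{\ri kx})(b) = 0$, while $\imp_\ell(e^{-\ri kx})(a) = 0$ and $\imp_\ell(e^{-\ri kx})(b) = -2\ri k e^{-\ri k b}$. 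Hence, if the impedance data of $v_\ell$ is $g$ on $\Gamma_\ell^-$ and zero on $\Gamma_\ell^+$, then $v_\ell$ is purely right-going with $A = \ri g e^{-\ri k a}/(2k)$ and $B=0$; conversely, data $g$ on $\Gamma_\ell^+$ with zero on $\Gamma_\ell^-$ gives a purely left-going wave with $B = \ri g e^{\ri k b}/(2k)$ and $A=0$. Evaluating $\imp_{\ell-1}$ of a right-going wave $A e^{\ri kx}$ at $\Gamma_{\ell-1}^+$ uses the outward normal $+1$ of $\Omega_{\ell-1}$, so the same cancellation as at $b$ yields $0$; this proves $\Imaps{\ell}{-}{\ell-1}{+} = 0$, and the identity $\Imaps{\ell}{+}{\ell+1}{-} = 0$ follows by the mirror argument. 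For the other two maps in \eqref{1d2}, the neighbouring subdomain's outward normal points in the opposite direction relative to propagation, so no cancellation occurs; substituting the explicit $A$ and $B$ gives $|\Imaps{\ell}{+}{\ell-1}{+}g| = 2k|B| = |g|$ and $|\Imaps{\ell}{-}{\ell+1}{-}g| = 2k|A| = |g|$.

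For \eqref{1d3}, I would apply Corollary \ref{cor:main_T2strip}, which gives $\imp_j(\cT_{j,\ell}\cT_{\ell,j'} z_{j'}) = \Imap{\ell}{j'}{j}{\ell}\bigl(\imp_\ell(\cT_{\ell,j'} z_{j'})\bigr)$ on $\Gamma_{j,\ell}$. The diagonal entries of $\bcU\bcL$ are $\cT_{i,i+1}\cT_{i+1,i}$, with associated map $\Imaps{i+1}{-}{i}{+}$, which is zero by \eqref{1d1}; similarly the diagonal entries of $\bcL\bcU$ use the other vanishing map from \eqref{1d1}. Since the first statement of Theorem \ref{main_T} also guarantees that the impedance trace of $\cT_{j,\ell}\cT_{\ell,j'}z_{j'}$ vanishes on $\partial \Omega_j \setminus \Gamma_{j,\ell}$, uniqueness for the interior impedance problem (Lemma \ref{lem:IIPU}) forces $\cT_{i,i+1}\cT_{i+1,i} = 0 = \cT_{i,i-1}\cT_{i-1,i}$, hence $\bcU\bcL = \bcL\bcU = \mathbf{0}$. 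The main thing to watch is the bookkeeping when translating between $\Imap{\ell}{j'}{j}{\ell}$ and $\Imaps{\ell}{\pm}{m}{\pm}$ via the dictionary \eqref{notstrip}; once the indices are lined up, everything is immediate from the plane-wave representation.
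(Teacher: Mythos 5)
Your proposal is correct and follows essentially the same route as the paper: both compute explicitly with the $1$-d plane-wave solution (showing a right-going wave has identically vanishing right-facing impedance data and vice versa), and both deduce \eqref{1d3} from the vanishing maps in \eqref{1d1} via the diagonal structure \eqref{products} and the link between $\cT_{j,\ell}\cT_{\ell,j'}$ and $\Imap{\ell}{j'}{j}{\ell}$. The only cosmetic difference is that the paper invokes the norm estimate \eqref{cor_T21new} (zero operator norm implies zero operator, since $\Vert\cdot\Vert_{1,k,\partial\Omega_j}$ is a norm on $U_0(\Omega_j)$), whereas you unpack that estimate into the trace identity of Corollary \ref{cor:main_T2strip}, the vanishing on $\partial\Omega_j\setminus\Gamma_{j,\ell}$ from Theorem \ref{main_T}, and uniqueness from Lemma \ref{lem:IIPU}; these are the same facts in a slightly different order.
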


\begin{proof}
These results are obtained from the explicit expression for the solution of the Helmholtz interior impedance problem in 1-d.
We consider only  $\Imaps{\ell}{-}{\ell-1}{+}$ and $\Imaps{\ell}{-}{\ell+1}{-}$;
the proofs for $\Imaps{\ell}{+}{\ell+1}{-}$ and $\Imaps{\ell}{+}{\ell-1}{+}$ are similar.

By Definition \ref{def:impmap}, the maps $\Imaps{\ell}{-}{\ell-1}{+}$ and $\Imaps{\ell}{-}{\ell+1}{-}$ can be written in terms of the solution of the following boundary value problem
\begin{align}
  v_\ell'' + k^2v_\ell & = 0  \quad  \text{in } \  \Omega_{\ell }, \label{eq51} \\
   -v_\ell'- \ri k v_\ell& = g \quad  \text{at }  \ {x=\Gamma_{\ell}^-}, \label{eq52} \\
 v_\ell' - \ri k v_\ell& = 0 \quad  \text{at }  \ {x=\Gamma_{\ell}^+} \label{eq53} ,  
\end{align}
for $g\in \mathbb{C}$.
The solution of \eqref{eq51}-\eqref{eq53} is 
\begin{align}\label{eq:exact}
 v_\ell(x) \  = \    \frac{\ri g}{2 k} \re^{\ri k (x-{\Gamma_{\ell}^-})}. 
 \end{align} 
Since 
\begin{align} \label{crucial} (v_\ell' - \ri k v_\ell)(x) = 0 \quad \text{and} \quad (-v_\ell'-  \ri k v_\ell)(x)  = g \re^{\ri k (x - \Gamma_\ell^-)} \quad \text{ for all }  x \in \Omega_\ell,   \end{align} 
 it follows immediately that $\Imaps{\ell}{-}{\ell-1}{+}g = 0$ and $\Imaps{\ell}{-}{\ell+1}{-}g = 
{ \re^{\ri k (\Gamma_{\ell+1}^- -{\Gamma_{\ell}^-})}}  g$. 
Then  \eqref{1d3} follows from using \eqref{1d1} in \eqref{products}, together with 
\eqref{cor_T21new}.
\end{proof}

\begin{proposition} \label{prop:1d}  
$$ \text{(i)} \quad   \bcT^n  \ =\ \bcL^n +\bcU^n, \quad \text{for all} \quad n \geq 1  
\quad \text{and} \quad \text{(ii)} \quad  
 \bcT^N  = 0 . $$
\end{proposition}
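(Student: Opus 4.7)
\textbf{Proof plan for Proposition \ref{prop:1d}.} The plan is to exploit the key algebraic identity \eqref{1d3} from Lemma \ref{lm:TtoI1D}, namely $\bcU\bcL = \bcL\bcU = \boldsymbol{0}$, together with the nilpotency \eqref{eq:nilpotent}, $\bcL^N = \bcU^N = \boldsymbol{0}$, which holds for any strip decomposition. Everything else will then be a clean induction and substitution.

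For part (i), I would argue by induction on $n$. The base case $n=1$ is just the definition $\bcT = \bcL + \bcU$ in \eqref{eq:splitT}. For the inductive step, assuming $\bcT^n = \bcL^n + \bcU^n$, compute
\begin{equation*}
\bcT^{n+1} = \bcT \cdot \bcT^n = (\bcL + \bcU)(\bcL^n + \bcU^n) = \bcL^{n+1} + \bcU^{n+1} + \bcL \bcU^n + \bcU \bcL^n.
\end{equation*}
The cross-terms vanish: for $n\geq 1$, $\bcL\bcU^n = (\bcL\bcU)\bcU^{n-1} = \boldsymbol{0}$ and $\bcU\bcL^n = (\bcU\bcL)\bcL^{n-1} = \boldsymbol{0}$ by \eqref{1d3}. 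This gives $\bcT^{n+1} = \bcL^{n+1} + \bcU^{n+1}$, closing the induction.

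For part (ii), apply (i) with $n=N$ to obtain $\bcT^N = \bcL^N + \bcU^N$, and then invoke \eqref{eq:nilpotent} to conclude $\bcT^N = \boldsymbol{0}$.

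The only nontrivial ingredient is \eqref{1d3}, but that is already established in Lemma \ref{lm:TtoI1D} via the explicit 1-d solution formula \eqref{eq:exact} showing that the ``backward'' impedance-to-impedance maps $\Imaps{\ell}{-}{\ell-1}{+}$ and $\Imaps{\ell}{+}{\ell+1}{-}$ vanish. So no real obstacle remains; the proof reduces to the routine binomial-type calculation above. In effect, the one-dimensional Helmholtz equation admits a perfectly transparent left-to-right and right-to-left decoupling of outgoing and incoming waves, and this decoupling is what makes the two triangular parts $\bcL$ and $\bcU$ mutually annihilating, yielding convergence in exactly $N$ steps.
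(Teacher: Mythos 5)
Your proof is correct and follows exactly the same route as the paper: an induction on $n$ starting from $\bcT = \bcL + \bcU$ and annihilating the cross-terms via $\bcL\bcU = \bcU\bcL = \boldsymbol{0}$ from \eqref{1d3}, then using nilpotency \eqref{eq:nilpotent} at $n = N$. You have simply written out the details the paper leaves implicit.
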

\begin{proof}
Part   (i) is proved by induction, starting from \eqref{eq:splitT} and using   \eqref{1d3}. Part (ii) uses part (i) with $n = N$ and 
\eqref{eq:nilpotent}.
\end{proof}

\subsection{Two dimensions}

In the rest of this section  our goal is to estimate $\bcT^n$, where $\bcT = \bcL+\bcU$ is given by \eqref{eq:splitT}. 
In 1-d, $\bcT^n$ takes the simple form given in Proposition \ref{prop:1d}, however this is not the case in 2-d. Our bounds in 2-d on $\bcT^n$ are therefore based on
the following elementary algebraic result.
  
 \subsubsection{An elementary algebraic result and its consequences}
\label{subsec:elem}
 
For integers $n \geq 1$ and $0 \leq j \leq n-1$, let $\cP(n,j)$ denote the set of monomials of order $n$ in the two variables $x,y$ that take  the form
\begin{align} \label{eq:monomx} p(x,y) & = x^{s_0} y^{s_1} x^{s_2} \ldots x^{s_j} 
\quad\text{($j$ even)}
\quad \text{or} \quad x^{s_0} y^{s_1} x^{s_2} \ldots y^{s_j}
\quad\text{($j$ odd)}
\\
  \label{eq:monomy}
  \text{or} \quad p(x,y) & =   y^{s_0} x^{s_1} y^{s_2} \ldots x^{s_j} \quad\text{($j$ odd)}
\quad \text{or}\quad  y^{s_0} x^{s_1} y^{s_2} \ldots y^{s_j} \quad\text{($j$ even)}, \end{align} 
with $1\leq s_\ell \leq n$ for all $\ell = 0, \ldots j$ and
 $s_0 + s_1  + \ldots + s_j = n$.  The terms in \eqref{eq:monomx}, \eqref{eq:monomy}  are monomials of order $n$ with $j$ transitions between the variables $x$ and $y$. 
Since we consider below operators $p(\cL, \cU)$ where $\cL$ and $\cU$ do not, in general, commute, all four of the expressions  in  
\eqref{eq:monomx}, \eqref{eq:monomy}  are considered to be distinct.
The proof of the following proposition is given in the appendix.     
\begin{proposition} \label{prop:sumsum}
  For all $n \geq 1$,  
  \begin{align} \label{eq:prod} (x+ y)^n \ = \ \sum_{j=0}^{n-1} \sum_{p \in \cP(n,j)} p(x,y) .\end{align} 
Moreover, for $0 \leq j \leq n-1$,
  \begin{align} \label{eq:count}
   \# \cP(n,j) \ := \ \text{cardinality of}  \  \cP(n,j) \ = \    \ 2 \left( \begin{array}{c} n-1 \\ j \end{array}\right). 
  \end{align} 
\end{proposition}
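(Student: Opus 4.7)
The plan is to prove \eqref{eq:prod} and \eqref{eq:count} by exploiting the fact that, since $x$ and $y$ are non-commuting variables, the expansion of $(x+y)^n$ reads
\[
(x+y)^n \ = \ \sum_{w \in \{x,y\}^n} w,
\]
where the sum runs over all $2^n$ words of length $n$ in the two-letter alphabet $\{x,y\}$. The strategy is then to exhibit a bijection between such words and the monomials in $\bigcup_{j=0}^{n-1}\cP(n,j)$, and to count these monomials.

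For the bijection, observe that any word $w\in\{x,y\}^n$ has a unique \emph{run-length encoding}: it consists of $j+1$ maximal runs of equal letters for some $0\leq j\leq n-1$, of lengths $s_0,s_1,\ldots,s_j\geq 1$ with $s_0+\cdots+s_j=n$, and its letters alternate starting from either $x$ or $y$. The number of transitions between $x$ and $y$ is precisely $j$. Conversely, a monomial of the form \eqref{eq:monomx} (respectively \eqref{eq:monomy}) encodes exactly the word which starts with a run of $x$'s (respectively $y$'s) of the given lengths. Hence $\cP(n,j)$ is in bijection with the words of length $n$ with $j$ transitions, and \eqref{eq:prod} follows once one verifies that every word appears exactly once in $\bigcup_{j}\cP(n,j)$.

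For the count \eqref{eq:count}, one picks the starting letter ($2$ choices: $x$ or $y$) and then counts the number of compositions of $n$ into $j+1$ positive parts $s_0,\ldots,s_j$. By a standard stars-and-bars argument, this number equals $\binom{n-1}{j}$, giving $\#\cP(n,j)=2\binom{n-1}{j}$. As a sanity check, summing over $j$ yields
\[
\sum_{j=0}^{n-1}\#\cP(n,j) \ = \ 2\sum_{j=0}^{n-1}\binom{n-1}{j} \ = \ 2\cdot 2^{n-1}\ =\ 2^n,
\]
which matches the total number of words of length $n$ and hence is consistent with \eqref{eq:prod}.

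The proof is essentially combinatorial bookkeeping, so I do not expect any serious obstacle. The one point that requires a moment of care is verifying that the four parity cases in \eqref{eq:monomx}--\eqref{eq:monomy} together produce each word of length $n$ exactly once — in particular, that words of length $n$ consisting of a single letter (i.e.\ $j=0$) are covered by both the $x^{s_0}$ and $y^{s_0}$ cases without double-counting, and that for $j\geq 1$ the alternation constraint in the run-length encoding forbids the "wrong" parity choice. Once this is checked, both \eqref{eq:prod} and \eqref{eq:count} follow immediately, and an induction on $n$ (or a direct expansion) could equivalently be used if one preferred to avoid the bijective argument entirely.
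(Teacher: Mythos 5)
Your proof is correct, and it takes a genuinely different route from the paper. The paper proves both \eqref{eq:prod} and \eqref{eq:count} by induction on $n$: for \eqref{eq:prod} it multiplies the assumed expansion of $(x+y)^n$ by $(x+y)$ on the left and carefully sorts the resulting monomials into the sets $\cP(n+1,j)$, splitting into the subsets of monomials starting with $x$ or $y$ and tracking how left-multiplication by $x$ either preserves or increments the transition count; for \eqref{eq:count} it derives the Pascal-type recurrence $\#\cP(n+1,j)=\#\cP(n,j)+\#\cP(n,j-1)$ and invokes the binomial identity. Your argument is direct and bijective: you expand $(x+y)^n$ as a sum over all $2^n$ words in $\{x,y\}^n$, observe that the run-length encoding of a word gives exactly the data $(s_0,\ldots,s_j)$ together with a starting letter (the parity cases in \eqref{eq:monomx}--\eqref{eq:monomy} are then automatic, since after $j$ alternations the final letter is determined by the starting letter and the parity of $j$), and count compositions of $n$ into $j+1$ positive parts by stars and bars. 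What the bijective argument buys is conceptual transparency and brevity — the identity and the count fall out simultaneously from one observation, with no inductive bookkeeping — together with a useful consistency check $\sum_j \#\cP(n,j)=2^n$. The paper's induction is more elementary in the sense that it avoids invoking stars-and-bars and run-length decompositions, but at the cost of a longer and less illuminating verification. The only point your writeup correctly flags as requiring care — that each word arises exactly once in $\bigcup_j \cP(n,j)$ and the parity conditions match the alternation pattern — is genuinely the crux, and is handled by the uniqueness of the run-length encoding as you note; there is no gap.
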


\begin{theorem}[General formula for $\cT^n$] \label{thm:sumsumLU}
  For all $n \geq 1$,
  \begin{align} \label{eq:dagger}  \cT^n \ = \ \sum_{j=0}^{n-1} \sum_{p \in \cP(n,j)} p (\cL, \cU) , \end{align}
\ig{ and the $j = 0$ term in \eqref{eq:dagger} vanishes when $n \geq N$.} 
      \end{theorem}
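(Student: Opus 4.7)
The plan is to obtain \eqref{eq:dagger} as a direct corollary of Proposition \ref{prop:sumsum} by substituting $x \mapsto \bcL$ and $y \mapsto \bcU$. The point to check is that the combinatorial identity \eqref{eq:prod} is purely formal: in Proposition \ref{prop:sumsum} the four monomial shapes in \eqref{eq:monomx}--\eqref{eq:monomy} are, by convention, treated as distinct words in the non-commuting letters $x$ and $y$, so the identity holds in the free (non-commutative) polynomial algebra on two generators. Therefore it remains valid after substituting any two (not necessarily commuting) operators, and in particular after setting $x = \bcL$, $y = \bcU$. Since $\bcT = \bcL + \bcU$ by \eqref{eq:splitT}, this yields
\begin{equation*}
\bcT^n = (\bcL + \bcU)^n = \sum_{j=0}^{n-1} \sum_{p \in \cP(n,j)} p(\bcL, \bcU),
\end{equation*}
which is \eqref{eq:dagger}.

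For the second assertion, I observe that the $j=0$ term in \eqref{eq:dagger} consists of exactly the two monomials of $\cP(n,0)$ with no transitions between the variables, namely $x^n$ and $y^n$ (this matches the count $\#\cP(n,0) = 2\binom{n-1}{0} = 2$ from \eqref{eq:count}). Thus the $j=0$ contribution is $\bcL^n + \bcU^n$. By the nilpotency relation \eqref{eq:nilpotent}, $\bcL^N = \mathbf{0} = \bcU^N$, and hence $\bcL^n = \mathbf{0} = \bcU^n$ for all $n \geq N$. Consequently the $j=0$ term in \eqref{eq:dagger} vanishes whenever $n \geq N$.

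The only real obstacle here is making sure the reader is convinced that Proposition \ref{prop:sumsum}, although it is stated in terms of polynomial expressions in commuting-looking variables $x,y$, is in fact a non-commutative identity by virtue of the convention (stated just before Proposition \ref{prop:sumsum}) that the four templates in \eqref{eq:monomx}--\eqref{eq:monomy} are considered distinct; everything else is a one-line substitution plus an appeal to \eqref{eq:nilpotent}. No additional computation beyond what has already been established is required.
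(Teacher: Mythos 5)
Your proof is correct and takes essentially the same approach as the paper: apply Proposition \ref{prop:sumsum} with $x=\bcL$, $y=\bcU$ (noting the identity is non-commutative by the stated convention), then observe $\cP(n,0)=\{x^n,y^n\}$ and invoke \eqref{eq:nilpotent}. The paper's proof is simply a terser version of what you wrote.
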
 
      \begin{proof}  The formula \eqref{eq:dagger} follows directly  from Proposition \ref{prop:sumsum}. To obtain the final statement, note that $\cP(n,0) = \{ x^n, y^n\}$, so by \eqref{eq:nilpotent}, when $n \geq N$, $p(\cL, \cU) = 0$, for $p \in  \cP(n,0)$. 
        
\end{proof}      
\begin{corollary} [Estimate for $\cT^n$ in terms of composite maps] \label{cor:composite}
  \ig{Suppose {$n\geq N$}.} Then, 
  \begin{align} \label{eq:composite} \Vert \cT^n\bv\Vert_{1,k,\partial} \leq 2 \left(\sum_{j=1}^{n-1} \left( \begin{array}{c} n-1\\j \end{array} \right) \max_{p\in \cP(n,j)} \Vert p(\cL, \cU) \Vert_{1,k,\partial}
  \right) \, \Vert \bv \Vert_{1,k,\partial},  \quad \text{for any} \quad  \bv \in \bbU_0.
\end{align}  \end{corollary}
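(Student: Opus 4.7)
The plan is to deduce Corollary \ref{cor:composite} as a direct consequence of Theorem \ref{thm:sumsumLU} combined with the cardinality count in Proposition \ref{prop:sumsum}. Since we assume $n \geq N$, Theorem \ref{thm:sumsumLU} gives us not only the expansion
\begin{equation*}
\cT^n \ =\ \sum_{j=0}^{n-1} \sum_{p\in\cP(n,j)} p(\cL,\cU)
\end{equation*}
but also the fact that the $j=0$ contribution vanishes (because $\cP(n,0)=\{x^n,y^n\}$ and $\cL^N = \cU^N = \mathbf{0}$ by \eqref{eq:nilpotent}, so $\cL^n = \cU^n = \mathbf{0}$ for $n\geq N$). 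Thus the sum effectively starts at $j=1$.

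Next, I would apply the $\|\cdot\|_{1,k,\partial}$ norm to $\cT^n \bv$, use the triangle inequality across both sums, and bound each $\|p(\cL,\cU)\bv\|_{1,k,\partial}$ by $\|p(\cL,\cU)\|_{1,k,\partial}\,\|\bv\|_{1,k,\partial}$. For each fixed $j$, all terms can be dominated uniformly by $\max_{p\in\cP(n,j)}\|p(\cL,\cU)\|_{1,k,\partial}$, producing a factor of the cardinality $\#\cP(n,j)$ in front.

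Finally, substituting $\#\cP(n,j) = 2\binom{n-1}{j}$ from \eqref{eq:count} yields exactly
\begin{equation*}
\|\cT^n \bv\|_{1,k,\partial}\ \leq\ \sum_{j=1}^{n-1} 2\binom{n-1}{j}\,\max_{p\in\cP(n,j)}\|p(\cL,\cU)\|_{1,k,\partial}\,\|\bv\|_{1,k,\partial},
\end{equation*}
which is \eqref{eq:composite} after pulling the factor of $2$ outside the sum. There is no real obstacle here: the corollary is a routine combination of the operator-expansion identity \eqref{eq:dagger}, the nilpotency \eqref{eq:nilpotent} (which kills the $j=0$ term once $n\geq N$), the triangle inequality, and the combinatorial count \eqref{eq:count}. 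The substantive work has already been done in Proposition \ref{prop:sumsum} and Theorem \ref{thm:sumsumLU}.
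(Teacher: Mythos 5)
Your argument is correct and is exactly the route the paper intends: the corollary is stated without a separate proof precisely because it follows from Theorem~\ref{thm:sumsumLU} (with the $j=0$ term dropped for $n\geq N$), the triangle inequality, a uniform bound over each $\cP(n,j)$, and the cardinality formula $\#\cP(n,j)=2\binom{n-1}{j}$ from \eqref{eq:count}. There is no gap and no meaningful deviation from the paper's approach.
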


\subsubsection{The impedance-to-impedance map on a canonical domain}
\label{subsec:canonical}

The properties \eqref{1d1}, \eqref{1d2} of the impedance-to-impedance map in 1-d can be understood via the fact that in 1-d 
the exact solution to the Helmholtz equation is given by \eqref{eq:exact} and thus 
  the action of the Dirichlet-to-Neumann map for the Helmholtz problem on a domain exterior to an interval
  is multiplication by $\ri k$.
  Multiplication by $\ri k$ no longer has this property in higher dimensions, 
  but we  see that,  under certain conditions, the relations \eqref{1d1} and \eqref{1d2}  still hold `approximately';
in the sense that $ \| \Imaps{\ell}{-}{\ell-1}{+} \|$ and $\| \Imaps{\ell}{+}{\ell+1}{-}\|$ can be small, with 
$ \|\Imaps{\ell}{+}{\ell-1}{+} \| \approx 1 $ and $\|\Imaps{\ell}{-}{\ell+1}{-} \| \approx  1$.
  We use these properties to prove a 2-d analogue of Proposition \ref{prop:1d} (ii), namely conditions under which $\cT^N$ is a contraction.  To obtain these properties, we first introduce a canonical domain on which the 2-d impedance-to-impedance maps can be studied.

The impedance-to-impedance map in the geometry Fig. \ref{fig:overlap} can be expressed in terms of a Helmholtz-harmonic solution on a  `canonical'  domain  
  ${\wOmega} =  [0,\hL] \times [0,1]$, 
  via an affine transformation $x \rightarrow x/H$.

 \begin{definition}[Canonical impedance-to-impedance map] \label{def:imp_unit}   For $\hL > 0$, let ${\wOmega} =  [0,\hL] \times [0,1]$ with boundary $\pwOmega$ and let  $\hGamma^{-}, \hGamma^+$ denote, respectively,  the left and right vertical boundaries of $\wOmega$.
       For any $\hdelta \in (0,\hL)$,  let 
 $$
 \hGamma_{\hdelta} := \{(\hdelta, y): y \in [0,1]\}, 
 $$
 i.e., $\hGamma_{\hdelta}$ is an  interior interface; see Figure \ref{fig:hat}.
  Let  ${u}$ be the solution to
\begin{equation}\label{eq:plane-wave-scattering1}\left.
  \begin{aligned}
  \Delta {u} + {k}^2 {u} &=0 \quad \text{ in} \quad {\wOmega},\\
     \frac{\partial {u}}{\partial n} - \ri {k} {u} &= {{g}}\quad \text{on}\quad  {\Gamma}^-,\\
    \frac{\partial {u}}{\partial n}  - \ri {k} {u} &=0\quad \text{on}\quad  \pwOmega\backslash {\Gamma}^-. \\
  \end{aligned}\right\}
  \end{equation}
  Then define the canonical left-to-right and left-to-left impedance-to-impedance maps by
  \begin{align} \label{eq:maps} 
  \rI_{-+} g:= \partial_x u - \ri k u , \quad  \rI_{--} g := -\partial_x u - \ri k u \quad \text{on} \quad \Gamma_\delta , \end{align} 
and define the following norms of these maps
\begin{equation}\label{def:rho_gamma}
 {\rho}({k},{\delta}, L) = \sup_{{g}\in L^2({\Gamma}^-)}\frac{\|\rI_{-+} g  \|_{L^{2}({\Gamma}_{\hdelta})} }
 {   \|{g}\|_{L^2({\Gamma}^{-})}},
 \quad\quad
{\gamma}({k},{\delta}, L)  = \sup_{{g}\in L^2({\Gamma}^-)}\frac{\| \rI_{--}g  \|_{L^{2}({\Gamma}_{\hdelta})} }
{  \|{g}\|_{L^2({\Gamma}^{-})}}.
\end{equation}
By Part (ii) of Lemma \ref{lem:Friday}, $\hrho$ and $\hgamma$ are well-defined. 
\end{definition}

\begin{figure}[H]
  \begin{center}
  \includegraphics[scale=0.8]{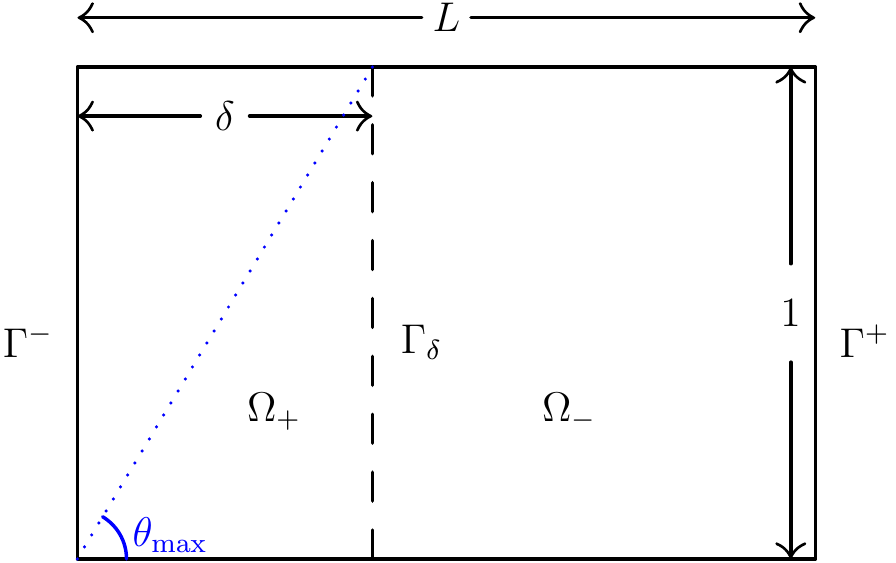}
\end{center}   
\caption{The canonical  domain ${\wOmega}$, composed of $\Omega_+$ (left) and $\Omega_-$ (right).  
The dotted diagonal with angle  $\theta_{\max}$ labelled in blue is used in \S\ref{subsec:SCA} below.
 \label{fig:hat}}
\end{figure}

We record the following simple relationship between $\gamma$ and $\rho$.

\begin{lemma}\label{lm:imp}
For ${\gamma} , {\rho}$ as defined in \eqref{def:rho_gamma}, 
\begin{align}\label{eq:triangle} 
\gamma(k,\delta, L) \le \sqrt{1+ {\rho}^2(k,\delta, L )}.
\end{align}
\end{lemma}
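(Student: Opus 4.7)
The plan is to exploit the Helmholtz-harmonic trace isometry of Lemma~\ref{lm:orth-trace-helm} on the right sub-rectangle $\Omega_- := (\delta, L) \times (0,1)$ of $\wOmega$, using that the solution $u$ of~\eqref{eq:plane-wave-scattering1} is Helmholtz-harmonic on $\Omega_-$. I would first identify the outgoing and incoming impedance traces on $\partial \Omega_-$: the outgoing impedance $(\partial/\partial n - \ri k) u$ vanishes on $\partial \Omega_- \cap \pwOmega$ by the zero-impedance condition in~\eqref{eq:plane-wave-scattering1}, and equals $\rI_{--} g$ on $\Gamma_\delta$ (since the outward normal of $\Omega_-$ on $\Gamma_\delta$ points in the $-\hat{x}$ direction). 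The incoming impedance $(-\partial/\partial n - \ri k) u$ equals $\rI_{-+} g$ on $\Gamma_\delta$, while on $\partial \Omega_- \setminus \Gamma_\delta$ the zero outgoing condition forces $\partial u/\partial n = \ri k u$, so the incoming impedance there equals $-2 \ri k u$, with squared modulus $4 k^2 |u|^2$. Equating the two equal $L^2$ norms from Lemma~\ref{lm:orth-trace-helm} applied to $u|_{\Omega_-}$ then gives the key identity
\begin{equation}\label{eq:pp1}
\|\rI_{--} g\|^2_{L^2(\Gamma_\delta)} = \|\rI_{-+} g\|^2_{L^2(\Gamma_\delta)} + 4 k^2 \|u\|^2_{L^2(\partial \Omega_- \setminus \Gamma_\delta)}.
\end{equation}

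Next I would bound the last term of~\eqref{eq:pp1} in terms of $\|g\|^2_{L^2(\Gamma^-)}$. Applying Green's first identity to $u$ on the whole of $\wOmega$ and taking the imaginary part, after inserting the boundary conditions from~\eqref{eq:plane-wave-scattering1}, yields
\begin{equation*}
k \|u\|^2_{L^2(\pwOmega)} = -\int_{\Gamma^-} \Im(g \bar u).
\end{equation*}
I would then estimate the right-hand side via Cauchy--Schwarz followed by the weighted Young inequality $\|g\|_{L^2(\Gamma^-)} \|u\|_{L^2(\Gamma^-)} \le \|g\|^2_{L^2(\Gamma^-)}/(4k) + k \|u\|^2_{L^2(\Gamma^-)}$, absorbing the $k\|u\|^2_{L^2(\Gamma^-)}$ contribution into the left to obtain $4 k^2 \|u\|^2_{L^2(\pwOmega \setminus \Gamma^-)} \le \|g\|^2_{L^2(\Gamma^-)}$. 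Since $\partial \Omega_- \setminus \Gamma_\delta \subset \pwOmega \setminus \Gamma^-$, substituting into~\eqref{eq:pp1} produces $\|\rI_{--} g\|^2_{L^2(\Gamma_\delta)} \le \|\rI_{-+} g\|^2_{L^2(\Gamma_\delta)} + \|g\|^2_{L^2(\Gamma^-)}$, and dividing through by $\|g\|^2_{L^2(\Gamma^-)}$ and taking the supremum over $g \in L^2(\Gamma^-)$ delivers $\gamma^2 \le 1 + \rho^2$, i.e.~\eqref{eq:triangle}.

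The main delicacy will be the choice of weight in the Young inequality: a crude Cauchy--Schwarz would yield only $\gamma^2 \le 4 + \rho^2$, and extracting the sharp constant $1$ depends on the factor $4 k^2$ in~\eqref{eq:pp1} matching exactly the factor $4$ produced when absorbing the $k \|u\|^2_{L^2(\Gamma^-)}$ term back into the Green's identity.
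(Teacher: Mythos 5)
Correct, and essentially the same approach as the paper's proof. Both arguments apply the isometry of Lemma~\ref{lm:orth-trace-helm} on $\Omega_-$ to arrive at your key identity (which is the paper's~\eqref{eq:isometry2}, with the incoming impedance trace on $\partial\Omega_-\setminus\Gamma_\delta$ evaluated explicitly as $-2\ri k u$); the only divergence is in how the residual term $4k^2\|u\|^2_{L^2(\partial\Omega_-\setminus\Gamma_\delta)}$ is bounded by $\|g\|^2_{L^2(\Gamma^-)}$ --- the paper obtains this by a second application of the isometry, now on all of $\wOmega$ (its~\eqref{eq:isometry1}), while you re-derive the same bound via Green's identity plus a weighted Young inequality, which is exactly the computation underlying Lemma~\ref{lm:orth-trace-helm}, so this is an unpacking of the paper's step rather than a genuinely different route, and it lands on the same sharp constant.
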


\begin{proof} 
Let $u\in U_0(\wOmega)$ be the solution to  \eqref{eq:plane-wave-scattering1}. 
 By Lemma \ref{lm:orth-trace-helm} 
\begin{align}
\int_{{\partial \wOmega}} \left| \frac{\partial {u}}{\partial {n}} -\ri \hk {u}\right|^2 ds \ =\ 
\int_{\partial {\wOmega}} \left( \left| \frac{\partial {u}}{\partial {n}}\right|^2   +  \hk^2  \left| {u}\right|^2 \right) ds \ =
\ \int_{\partial {\wOmega}} \left|- \frac{\partial {u}}{\partial {n}} -\ri \hk {u}\right|^2 ds. 
  \label{isom} \end{align}
Using the boundary conditions in \eqref{eq:plane-wave-scattering1} together with  \eqref{isom},  we obtain 
\begin{equation}\label{eq:isometry1}
  \int_{{\Gamma}^{-}} \left| -\partial_x {u}-\ri \hk {u}\right|^2 ds\  = \ \int_{{\Gamma}^{-}} \left| \frac{\partial {u}}{\partial \hn} -\ri \hk
    {u}\right|^2 ds \ =\  \int_{\igg{\partial} \wOmega} \left| \frac{\partial {u}}{\partial \hn} -\ri \hk {u}\right|^2 ds \ =\
  \int_{\partial {\wOmega}} \left|- \frac{\partial {u}}{\partial {n}} -\ri \hk {u}\right|^2 ds.
\end{equation}
Now let ${\Omega}_-$ denote the subdomain of $\wOmega$ with height $1$ and  vertical sides $\hGamma_{\hdelta}$ and  
${\Gamma}^+$ (see Figure \ref{fig:hat}).
Since $\hu \in U_0(\hOmega_-)$, repeating the argument above gives
\begin{align}
  \int_{{\Gamma}_{\hdelta}} \left| -\partial_x {u}-\ri \hk {u}\right|^2 ds &= \int_{\partial \hOmega_{-}}
                                                                             \left|  \frac{\partial {u}}{\partial \hn}  -\ri \hk {u}\right|^2 ds \ = \ \int_{\partial \hOmega_{-}} \left|  - \frac{\partial {u}}
                                                                             {\partial \hn}  -\ri \hk {u}\right|^2 ds \nonumber   
  \\ & = \int_{{\Gamma}_{\hdelta}} \left| \partial_x \hu -\ri \hk {u}\right|^2 ds +  \int_{ \partial {\Omega}_{-} \backslash {\Gamma}_{\hdelta}} \left|- \frac{\partial {u}}{\partial {n}} -\ri \hk {u}\right|^2 ds. \label{eq:isometry2}
\end{align}
Since $\partial {\Omega}_{-} \backslash {\Gamma}_{\hdelta}  \subset \partial {\Omega}$, we can use the definition \eqref{def:rho_gamma} of
$\hrho(\hk, \hdelta, \hL)$   to estimate the first term on the right-hand side of \eqref{eq:isometry2} and use   \eqref{eq:isometry1} to estimate the second term on the right-hand side of  \eqref{eq:isometry2}:
\begin{align*}
\int_{{\Gamma}_{\hdelta} } \left| -\partial_x {u}-\ri \hk {u}\right|^2
  &\ \le \ {\rho}^2 ({k}, {\delta}, \hL )  \int_{{\Gamma}^{-}} \left| -\partial_x {u}-\ri \hk {u}\right|^2 ds
    + \int_{{\Gamma}^{-}} \left| -\partial_x {u}-\ri \hk {u}\right|^2 ds\\
&\ =  \ \left(1+ {\rho}^2 ({k},{\delta}, \hL )\right)  \int_{{\Gamma}^{-}} \left| -\partial_x {u}-\ri \hk {u}\right|^2 ds.
\end{align*}
The result follows from the definition of $\hgamma(\hk, \hdelta, \hL)$ \eqref{def:rho_gamma}.
\end{proof}

\subsubsection{Main convergence results obtained by bounding the actions of $\cL$ and $\cU$ via single impedance-to-impedance maps}
\label{subsec:main} 

We now return to the physical domain, as depicted in Figure \ref{fig:overlap}. 

\begin{corollary}\label{coro:imp}
In 2-d, with $\Gamma_{\ell}^{\pm}$ as defined in Notation \ref{def:geom},
  \begin{align} 
    \|\Imaps{\ell}{-}{\ell-1}{+}\| &= \hrho(kH, \delta_\ell/H, L_\ell/H),   \label{rho1} \\
                                   \|\Imaps{\ell}{+}{\ell+1}{-}\| & =      \hrho(kH, \delta_{\ell+1}/H, L_\ell/H),  
\label{rho2} \end{align}
and 
\begin{align}
  \|\Imaps{\ell}{-}{\ell+1}{-}\| & =   \hgamma(kH,  (L_\ell-\delta_{\ell+1})/H, L_\ell/H),   \label{gamma1}\\
                                       \|\Imaps{\ell}{+}{\ell-1}{+}\| & = \hgamma(kH, (L_\ell-\delta_\ell)/H, L_{\ell}/H ).  \label{gamma2}
\end{align}
\end{corollary}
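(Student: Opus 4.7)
The plan is to identify each of the four physical impedance-to-impedance norms with a canonical one by an affine rescaling (and, in two cases, an additional reflection in the $x$-variable). Fix $\ell$ and write, after translation of the lower-left corner to the origin, $\Omega_\ell = (0,L_\ell) \times (0,H)$. Define the map $\Phi : \wOmega \to \Omega_\ell$ by $\Phi(\hat x, \hat y) = (H\hat x, H\hat y)$ with $\hL = L_\ell/H$, and set $\hat v(\hat x,\hat y) := v(\Phi(\hat x, \hat y))$. A direct chain-rule computation shows that $\Delta v + k^2 v = 0$ on $\Omega_\ell$ is equivalent to $\Delta \hat v + \hk^2 \hat v = 0$ on $\wOmega$ with $\hk = kH$, and that the impedance operator transforms as
\begin{equation*}
\Big(\tfrac{\partial}{\partial n} - \ri k\Big) v \,\circ\, \Phi \;=\; H^{-1}\Big(\tfrac{\partial}{\partial \hn} - \ri \hk\Big)\hat v.
\end{equation*}

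With this in hand, verify the first identity \eqref{rho1}. By Definition \ref{def:impmap}, $v\in U_0(\Omega_\ell)$ solves the Helmholtz impedance problem with data $g$ on $\Gamma_\ell^-=\{0\}\times(0,H)$ and zero on the rest of $\partial\Omega_\ell$; setting $\hat g(\hat y) := H\, g(H\hat y)$ and using the relation above, $\hat v$ solves \eqref{eq:plane-wave-scattering1} with data $\hat g$ on $\hGamma^-$. Since $\Gamma_{\ell-1}^+$ is an interior vertical interface of $\Omega_\ell$ at $x = \delta_\ell$ and the outward normal of $\Omega_{\ell-1}$ there points in $+x$, its image under $\Phi^{-1}$ is $\hGamma_{\hdelta}$ with $\hdelta = \delta_\ell/H$, and $(\partial_{n_{\ell-1}} - \ri k)v \circ \Phi = H^{-1}\rI_{-+}\hat g$. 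Substituting in the boundary $L^2$-norms via $\int_0^H|\cdot|^2\,dy = H\int_0^1|\cdot(H\hat y)|^2\,d\hat y$, the factors of $H$ from the Jacobian of the 1-d trace integrals and from the transformation of the impedance operator cancel exactly, so that
\begin{equation*}
\frac{\|\Imaps{\ell}{-}{\ell-1}{+}g\|_{L^2(\Gamma_{\ell-1}^+)}^2}{\|g\|_{L^2(\Gamma_\ell^-)}^2} \;=\; \frac{\|\rI_{-+}\hat g\|_{L^2(\hGamma_{\hdelta})}^2}{\|\hat g\|_{L^2(\hGamma^-)}^2}.
\end{equation*}
Since $g\mapsto \hat g$ is a bijection of $L^2(\Gamma_\ell^-)$ onto $L^2(\hGamma^-)$, taking the supremum gives \eqref{rho1}. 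The identity \eqref{gamma1} follows the same path with the only differences that $\Gamma_{\ell+1}^-$ sits at $x = L_\ell - \delta_{\ell+1}$ (hence $\hdelta = (L_\ell-\delta_{\ell+1})/H$) and that the outward normal of $\Omega_{\ell+1}$ on $\Gamma_{\ell+1}^-$ points in $-x$, so that the left-facing impedance $-\partial_x \hat v - \ri\hk\hat v$ produces $\rI_{--}$ rather than $\rI_{-+}$.

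For \eqref{rho2} and \eqref{gamma2}, where the input datum lives on $\Gamma_\ell^+$, additionally pre-compose $\Phi$ with the reflection $R(\hat x, \hat y) = (\hL - \hat x, \hat y)$ on $\wOmega$. The reflection preserves Helmholtz, swaps $\hGamma^-\leftrightarrow \hGamma^+$ and converts $+\partial_{\hat x}\leftrightarrow -\partial_{\hat x}$; after this reduction the same scaling argument applies, with the overlap on the new left becoming $\delta_{\ell+1}$ in \eqref{rho2} (so $\hdelta = \delta_{\ell+1}/H$) and the interior interface landing at $\hdelta = (L_\ell - \delta_\ell)/H$ in \eqref{gamma2}; the sign flip of the output normal turns $\rI_{-+}\to \rI_{-+}$ (two flips cancel) in \eqref{rho2} and $\rI_{--}\to \rI_{--}$ in \eqref{gamma2}, giving $\hrho$ and $\hgamma$ respectively.

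The work is almost entirely bookkeeping; the only step that requires care is tracking the four sign/position combinations (which overlap parameter $\delta_\ell$ versus $\delta_{\ell+1}$ appears, measured from which side, and which of $\rI_{-+}, \rI_{--}$ results). The well-definedness of all quantities (so that taking the sup is meaningful) is already guaranteed by Lemma \ref{lem:Ibounded} on the physical side and by the remark after Definition \ref{def:imp_unit} on the canonical side, so no new analytic input is required.
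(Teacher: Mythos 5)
Your proof is correct and follows the same affine-rescaling argument as the paper's proof, which likewise rescales by $1/H$ to land on the canonical domain with wavenumber $kH$ and interior interface at position $\delta_\ell/H$. You add useful detail that the paper omits: the explicit cancellation of the $H$-factors in the boundary $L^2$ norms and the impedance operator, and the reflection $\hat x \mapsto \hL - \hat x$ that handles the $\Gamma_\ell^+$-input cases, where the paper simply says the remaining three proofs are ``similar.''
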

\begin{proof}
We outline how to prove \eqref{rho1}; the proofs of \eqref{rho2}-\eqref{gamma2} are similar.
Following the discussion in \S\ref{sec:notation},
 the definition of $\Imaps{\ell}{-}{\ell-1}{+}g$ involves a homogeneous Helmholtz problem
  on $\Omega_\ell$, which has  length $L_\ell$ and height $H$. \ig{Using  an affine transformation
with scaling factor $1/H$, we transform this   to  a Helmholtz problem on the (canonical)  domain with length $L_\ell/H$ and height $1$ with wavenumber  $kH$.}
  The required  impedance data comes from evaluating in the right-facing direction at the
  interior interface situated at  position $\delta_\ell/H$ on the canonical domain, yielding \eqref{rho1}.   
\end{proof}

Up to now  we have developed the theory with general $L_\ell$ and $\delta_\ell$ to emphasise that these can vary with $\ell$. To reduce technicalities in the remainder of the theory, we introduce the simplifying notation.
\begin{align}  \rho & = \max_\ell \Big\{ \hrho(kH, \delta_\ell/H, L_\ell/H),  \,\, \hrho(kH, \delta_{\ell+1}/H, L_\ell/H)\Big\},  \label{simplify1} \\
  \gamma & = \max_\ell \Big\{ \hgamma(kH,(L_\ell -  \delta_\ell)/H, L_\ell/H),  \,\, \hgamma(kH, (L_\ell -\delta_{\ell+1})/H, L_\ell/H)\Big\}. 
\label{simplify2} \end{align} 
We make this slight abuse of notation to avoid introducing additional symbols for the maxima above.

\begin{lemma}\label{lm:TLU}
Let $\rho$ and $\gamma$ be defined as in \eqref{simplify1}, \eqref{simplify2},
and let $\|\cdot\|_{1,k,\partial}$ be  as in \eqref{eq:pseudo-energy1}.
Then,  
\begin{align}
  \|\bcL\bcU \bv \|_{1,k,\partial} \le \rho \|\bcU \bv \|_{1,k,\partial}\quad &\text{and}\quad \|\bcU\bcL\bv\|_{1,k,\partial} \le \rho\|\bcL \bv \|_{1,k,\partial}  \quad \text{for all} \  \bv \in \bbU, 
\label{48b}\\
  \|\bcL^2 \bv \|_{1,k,\partial} \le \gamma  \|\bcL\bv \|_{1,k,\partial}\quad &\text{and} \quad \|\bcU^2\bv \|_{1,k,\partial} \le \gamma \|\bcU \bv \|_{1,k,\partial} \quad \text{for all} \  \bv \in \bbU,\label{48a} \\
\|\bcL\bv \|_{1,k,\partial} \le \sqrt{\gamma^2+\rho^2}\|\bv \|_{1,k,\partial}\quad &\text{and}\quad \|\bcU\bv\|_{1,k,\partial} \le \sqrt{\gamma^2+\rho^2}\|\bv \|_{1,k,\partial} \quad  \text{for all} \  \bv \in \bbU_0. \label{48c} 
\end{align}
\end{lemma}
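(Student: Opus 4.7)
The plan is to treat the three pairs of inequalities separately. Inequalities \eqref{48b} and \eqref{48a} follow by expanding the matrix products in \eqref{products}, observing that each nonzero diagonal entry is a composition $\cT_{j,m}\cT_{m,j'}$ with consecutive indices, and invoking \eqref{cor_T21new} of Corollary \ref{cor:main_T2strip} together with Corollary \ref{coro:imp} to identify the resulting impedance-to-impedance map.

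For \eqref{48b}: since $(\bcL\bcU\bv)_\ell = \cT_{\ell,\ell-1}\cT_{\ell-1,\ell}v_\ell$, Corollary \ref{cor:main_T2strip} applied with $j=j'=\ell$ and middle index $m=\ell-1$ yields the pointwise bound $\|(\bcL\bcU\bv)_\ell\|_{1,k,\partial\Omega_\ell} \le \|\Imap{\ell-1}{\ell}{\ell}{\ell-1}\|\, \|(\bcU\bv)_{\ell-1}\|_{1,k,\partial\Omega_{\ell-1}}$; Corollary \ref{coro:imp} then identifies $\|\Imap{\ell-1}{\ell}{\ell}{\ell-1}\| = \hrho(kH, \delta_\ell/H, L_{\ell-1}/H) \le \rho$. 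Squaring and summing over $\ell$ gives the first bound in \eqref{48b}, and $\bcU\bcL$ is analogous. For \eqref{48a}: $(\bcL^2\bv)_\ell = \cT_{\ell,\ell-1}\cT_{\ell-1,\ell-2}v_{\ell-2}$, so the same strategy gives a bound involving $\|\Imaps{\ell-1}{-}{\ell}{-}\|$, which by \eqref{gamma1} is a $\gamma$-type quantity; similarly for $\bcU^2$.

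For \eqref{48c}, the argument is longer. Since $v_{\ell-1}\in U_0(\Omega_{\ell-1})$, the simplification \eqref{Martin} used in proving Corollary \ref{cor:main_T2strip} gives
\begin{equation*}
\|(\bcL\bv)_\ell\|_{1,k,\partial\Omega_\ell} = \|\imp_\ell(\cT_{\ell,\ell-1}v_{\ell-1})\|_{L^2(\Gamma_\ell^-)} = \|\imp_\ell(v_{\ell-1})\|_{L^2(\Gamma_\ell^-)}.
\end{equation*}
The plan is to decompose $v_{\ell-1} = w^- + w^+$, where $w^\pm \in U_0(\Omega_{\ell-1})$ is the Helmholtz-harmonic function with impedance data $g^\pm := \imp_{\ell-1}(v_{\ell-1})|_{\Gamma_{\ell-1}^\pm}$ on $\Gamma_{\ell-1}^\pm$ and zero on the remainder of $\partial\Omega_{\ell-1}$ (well-posedness and uniqueness from Lemma \ref{lem:IIPU} justify the splitting; in the iterative context the impedance data on $\partial\Omega_{\ell-1}\cap\partial\Omega$ vanishes by \eqref{eq33}, so the decomposition is exhaustive). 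By linearity, $\imp_\ell(v_{\ell-1}) = \imp_\ell(w^-) + \imp_\ell(w^+)$ on $\Gamma_\ell^-$. By Definition \ref{def:imp_unit}, the map $g^- \mapsto \imp_\ell(w^-)|_{\Gamma_\ell^-}$ is a left-to-left canonical map of norm $\hgamma(kH, (L_{\ell-1}-\delta_\ell)/H, L_{\ell-1}/H) \le \gamma$; reflecting the $x$-axis so that $\Gamma_{\ell-1}^+$ becomes the left boundary converts $g^+ \mapsto \imp_\ell(w^+)|_{\Gamma_\ell^-}$ into a left-to-right canonical map of norm $\hrho(kH, \delta_\ell/H, L_{\ell-1}/H) \le \rho$. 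Hence by the triangle inequality and then Cauchy--Schwarz,
\begin{equation*}
\|\imp_\ell(v_{\ell-1})\|_{L^2(\Gamma_\ell^-)}^2 \le \bigl(\gamma\|g^-\|+\rho\|g^+\|\bigr)^2 \le (\gamma^2+\rho^2)\bigl(\|g^-\|^2+\|g^+\|^2\bigr) = (\gamma^2+\rho^2)\|v_{\ell-1}\|_{1,k,\partial\Omega_{\ell-1}}^2.
\end{equation*}
Squaring and summing over $\ell$ completes \eqref{48c} for $\bcL$; the bound for $\bcU$ follows by the mirror-image argument.

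The main obstacle lies in step 3: correctly identifying the four impedance-to-impedance maps arising after splitting $v_{\ell-1}$ and relating each (via reflection of the canonical domain when necessary) to either $\gamma$ or $\rho$, and verifying that the boundary data on $\partial\Omega_{\ell-1}\cap\partial\Omega$ can indeed be taken to vanish in the intended setting. A minor bookkeeping obstacle in steps 1 and 2 is tracking the index shifts (e.g.\ $\ell\to\ell-1$) in Corollary \ref{coro:imp} to confirm that every resulting canonical norm is bounded by the maxima $\rho$ and $\gamma$ defined in \eqref{simplify1}--\eqref{simplify2}.
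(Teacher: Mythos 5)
Your proof takes essentially the same route as the paper's. For \eqref{48b} and \eqref{48a} the paper also expands the block products via \eqref{products}, then applies \eqref{cor_T21new} and Corollary \ref{coro:imp} to bound each nonzero diagonal block; for \eqref{48c} the paper also uses the simplification \eqref{Martin} to reduce $\|(\bcL\bv)_\ell\|_{1,k,\partial\Omega_\ell}$ to an impedance trace of $v_{\ell-1}$, decomposes $v_{\ell-1}$ into two one-sided components (the paper writes them as $v_\ell^\pm$, specified by which interface trace they annihilate, but the orthogonality $\|v_\ell\|^2 = \|v_\ell^+\|^2 + \|v_\ell^-\|^2$ forces disjoint supports, so this matches your $w^\pm$), bounds the two propagated traces by $\gamma$ and $\rho$ respectively via Corollary \ref{coro:imp}, and combines with Cauchy--Schwarz exactly as you do. One genuinely useful observation you add: both your argument and the paper's implicitly need the impedance data of $v_{\ell-1}$ on $\partial\Omega_{\ell-1}\cap\partial\Omega$ to vanish, so that the two-piece splitting is exhaustive and so that the canonical maps $\hgamma$, $\hrho$ (defined with data posed on a single vertical interface) apply as stated. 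You explicitly note this restriction and justify it via \eqref{eq33}, which indeed holds for the error iterates $\be^n$ with $n\ge1$ to which the lemma is applied; the paper leaves this tacit, so your remark is a real clarification rather than an error, though it would be cleaner to state the lemma for the subspace of $\bbU_0$ with vanishing exterior impedance traces.
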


\begin{proof}  To prove the first estimate  in \eqref{48b}, we use  \eqref{products}, the bound \eqref{cor_T21new} 
(recalling that $\Imaps{\ell, \ell+1}{}{\ell+1,\ell}{}=\Imaps{\ell}{+}{\ell+1}{-}$), \eqref{rho2},
  and  \eqref{simplify1}
        to obtain 
$$
\begin{aligned}
\|\bcL\bcU \bv\|_{1,k,\partial}^2 &\ = \   \sum_{\ell = 1}^{N-1}  \N{\cT_{\ell+1,\ell}\cT_{\ell,\ell+1} \, v_{\ell+1}  }_{1,k,\partial \Omega_{\ell+1}}^2 
\ \le \ \max_{\ell = 1, \ldots, N-1} \|\Imaps{\ell}{+}{\ell+1}{-}   \|^2\,  \sum_{\ell=1}^{N-1}    \|\cT_{\ell,\ell+1} \, v_{\ell+1} \|_{1,k,\partial \Omega_{\ell}}^2 \\
&\leq \rho^2 \sum_{\ell = 1}^{N-1}  \|\cT_{\ell,\ell+1} \, v_{\ell+1} \|_{1,k,\partial \Omega_\ell}^2 \ = \ \rho^2 \|\bcU\bv\|_{1,k,\partial}^2\,  .
\end{aligned}
$$
The remaining estimates in \eqref{48b}, \eqref{48a}  are proved similarly.
We now focus on the first estimate in \eqref{48c} (the proof of the second one is similar).   Using the definition of $\bcL$, 
the definition \eqref{eq41}-\eqref{eq43} of $\cT_{\ell+1,\ell}$, 
the definition \eqref{eq:norm2} of the norm $\|\cdot\|_{1,k,\partial \Omega_{\ell+1}}$,
and the fact that $\chi_\ell = 1$ on $\Gamma_{\ell+1}^{-}$ and $\chi_\ell$ vanishes on $\Gamma_{\ell+1}^{+}$, we obtain 
\beq\label{eq:RAM1}
   \Vert \bcL\bv \Vert_{1,k,\partial}^2= \sum_{\ell=1}^{N-1} \Vert \cT_{\ell+1, \ell} \, v_{\ell}  \Vert_{1,k,\partial \Omega_{\ell+1}}^2
    \ = \ \sum_{\ell =1 }^{N-1}  \Vert (-\partial_x -\ri k) v_{\ell}  \Vert_{L^2(\Gamma_{\ell+1}^-)}^2.    
   \eeq
Since $v_{\ell}\in U_0(\Omega_{\ell})$, we can write $v_{\ell} = v_{\ell}^+ +v_{\ell}^-$ with components $v_{\ell}^\pm \in U_0(\Omega_{\ell})$ satisfying  $$\left( (\partial_x - \ri k) v_{\ell}^{+} \right)\big|_{\Gamma_{\ell}^+} = 0, \quad \text{and} \quad  \left( (-\partial_x - \ri k) v_{\ell}^{-} \right)\big|_{\Gamma_{\ell}^-} = 0.$$
Then observe that $\|v_\ell\|^2_{1,k,\partial \Omega_{\ell}}= \|v_\ell^+\|^2_{1,k,\partial \Omega_{\ell}} + \|v_\ell^-\|^2_{1,k,\partial \Omega_{\ell}}$,
and, by Corollary \ref{coro:imp},
\begin{align}\nonumber
\Vert (-\partial_x -\ri k) v_{\ell}  \Vert_{L^2(\Gamma_{\ell+1}^-)}  &\ \le\  \Vert (-\partial_x -\ri k) v_{\ell}^-   \Vert_{L^2(\Gamma_{\ell+1}^-)}  + \Vert (-\partial_x -\ri k) v_{\ell}^+  \Vert_{L^2(\Gamma_{\ell+1}^-)} \\ \nonumber
&\ \le\  \rho \|(\partial_x - \ri k) v_{\ell}^-\|_{L^2(\Gamma_{\ell}^+)} + \gamma \|(-\partial_x - \ri k)v_{\ell}^+\|_{L^2(\Gamma_{\ell}^-)}  \\ 
                                                                     &\ = \  \rho \|v_{\ell}^-\|_{1,k,\partial \Omega_{\ell}} + \gamma \|v_{\ell}^+\|_{1,k,\partial \Omega_{\ell}}
                                                                       \ \le \
                                                \sqrt{\gamma^2+\rho^2}\|v_{\ell}\|_{1,k,\partial \Omega_{\ell}}.\label{eq:RAM2}
\end{align}
Combining \eqref{eq:RAM1} and \eqref{eq:RAM2} yields
  \begin{align*}
   \Vert \bcL\bv \Vert_{1,k,\partial}^2   & \ \le \ \sum_{\ell=1}^{N-1}\sqrt{\gamma^2+\rho^2}\|v_{\ell}\|_{1,k,\partial \Omega_{\ell}}^2\ \le \   \sqrt{\gamma^2+\rho^2} \Vert \bv \Vert_{1,k,\partial}^2.
        \end{align*}
\end{proof}

The following two results are most useful when       $\rho$ is controllably small and $\gamma$ is  bounded independently of the important parameters; this situation is motivated by the fact that, in 1-d, $\rho = 0$ and $\gamma = 1$.
  
\begin{theorem}[Estimate of  $\cT^N$] \label{thm:TLU}
If  the number of subdomains $N \geq 2$, then,  for any $\bv\in \bbU_0,$
\begin{align}\label{eq:TLU}
\|\bcT^N  \bv\|_{1,k,\partial} \ \le \ 2\sqrt{\gamma^2+\rho^2}\left[(\gamma+\rho)^{N-1} -\gamma^{N-1}\right]  \|  \bv\|_{1,k,\partial} ,
\end{align}
where $\rho, \gamma$ are defined in \eqref{simplify1} and \eqref{simplify2}. 
\end{theorem}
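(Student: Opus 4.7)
The plan is to combine the monomial expansion of $\bcT^N$ in Theorem \ref{thm:sumsumLU} with a uniform estimate for each monomial derived from Lemma \ref{lm:TLU}, and then resum using the binomial theorem. Since $n=N$, the $j=0$ term in \eqref{eq:dagger} vanishes by the nilpotence \eqref{eq:nilpotent}, and Corollary \ref{cor:composite} immediately gives
$$
\|\bcT^N\bv\|_{1,k,\partial}\leq 2\sum_{j=1}^{N-1}\binom{N-1}{j}\max_{p\in\cP(N,j)}\|p(\bcL,\bcU)\bv\|_{1,k,\partial}.
$$
The task therefore reduces to a uniform estimate on $\|p(\bcL,\bcU)\bv\|_{1,k,\partial}$ for $p\in\cP(N,j)$ and $\bv\in\bbU_0$.

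The key claim I would establish is
$$
\|p(\bcL,\bcU)\bv\|_{1,k,\partial}\leq \sqrt{\gamma^2+\rho^2}\,\gamma^{N-j-1}\rho^j\,\|\bv\|_{1,k,\partial}\qquad\text{for all }p\in\cP(N,j),\ 1\leq j\leq N-1.
$$
To prove it I would apply the factors of $p(\bcL,\bcU)$ to $\bv$ one by one from right to left. A monomial in $\cP(N,j)$ consists of $j+1$ alternating blocks of $\bcL$'s and $\bcU$'s with total degree $N$ and $j$ transitions between blocks. The very first application (the rightmost factor acting on $\bv$) incurs a factor $\sqrt{\gamma^2+\rho^2}$ by \eqref{48c}; this is the only place where the hypothesis $\bv\in\bbU_0$ is used. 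Each of the $j$ transitions then incurs $\rho$ by \eqref{48b}, since at that step the input is already a vector produced by the opposite operator. Each of the remaining $N-j-1$ intra-block applications incurs $\gamma$ by \eqref{48a}. Throughout, every iterate stays in $\bbU_0$ because $\bcL,\bcU$ map $\bbU$ into $\bbU_0$ (remark after Theorem \ref{main_T}), so the three bounds of Lemma \ref{lm:TLU} are applicable at every step.

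Substituting the monomial bound and using the binomial identity $\sum_{j=0}^{N-1}\binom{N-1}{j}\gamma^{N-1-j}\rho^j=(\gamma+\rho)^{N-1}$, with the $j=0$ term $\gamma^{N-1}$ peeled off, then yields
$$
\|\bcT^N\bv\|_{1,k,\partial}\leq 2\sqrt{\gamma^2+\rho^2}\bigl[(\gamma+\rho)^{N-1}-\gamma^{N-1}\bigr]\|\bv\|_{1,k,\partial},
$$
which is \eqref{eq:TLU}.

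The only delicate point is the right-to-left bookkeeping in the monomial estimate: each invocation of \eqref{48b} requires the operator being applied to see a vector that already begins with the opposite operator, and each invocation of \eqref{48a} requires one that already begins with the same operator. Both conditions are automatic from the alternating-block structure of monomials in $\cP(N,j)$, so no case analysis is needed; the only care is to remember that exactly one factor $\sqrt{\gamma^2+\rho^2}$ (and not $\gamma$ or $\rho$) enters, from the very first application via \eqref{48c}. The real content of this accounting is that each block transition replaces $\sqrt{\gamma^2+\rho^2}$ by the smaller factor $\rho$, which is precisely what makes the overall bound scale like $(\gamma+\rho)^{N-1}$ rather than the crude $(2\sqrt{\gamma^2+\rho^2})^{N}$ that one would obtain by applying \eqref{48c} at every step.
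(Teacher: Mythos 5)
Your proof is correct and follows essentially the same route as the paper: expand $\bcT^N$ via the monomial decomposition of Theorem \ref{thm:sumsumLU} (with the $j=0$ term killed by nilpotence), bound each monomial by $\sqrt{\gamma^2+\rho^2}\,\rho^j\gamma^{N-1-j}$ through repeated use of \eqref{48a}, \eqref{48b}, \eqref{48c}, and resum with the binomial theorem. The paper carries out the monomial estimate explicitly only for $j=1$ (peeling factors of $\bcL$ from the left, then the transition, then the $\bcU$ block), asserting the general $j$ case by induction; your right-to-left per-factor bookkeeping with the explicit $j$-transition / $(N-j-1)$-intra-block count is a slightly more detailed but entirely equivalent way to organize that same induction.
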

\begin{proof} 
We use Theorem \ref{thm:sumsumLU} with $n=N$, so the $j=0$ term in \eqref{eq:dagger} vanishes. We now claim
  that,  for each $p \in \cP(N,j)$ with  $j \in \{1, \ldots, N-1\}$,  and  for any $\bv \in \bbU_0$,
  \begin{align} \label{eq:notproved} \Vert p(\bcL, \bcU)\bv \Vert_{1,k,\partial } \ \leq \ \sqrt{\gamma^2 + \rho^2} \, \rho^j\,  \gamma^{N-1-j} \|  \bv\|_{1,k,\partial}  .\end{align}
  \ig{We prove \eqref{eq:notproved} in the case $j = 1$,  where  $p(x,y) = x^{s_0} y^{s_1}$ with $s_0 + s_1 = N$;  the case of higher $j$ is obtained by induction.  Then  
    using  Lemma \ref{lm:TLU}  freely,
    \begin{align}  \Vert p(\bcL, \bcU) \bv\Vert_{1,k,\partial}  & = \Vert \bcL^{s_0} \bcU^{s_1} \bv \Vert_{1,k,\partial} \leq \gamma^{s_0-1}  \Vert \bcL \bcU^{s_1} \bv \Vert_{1,k,\partial} \leq \rho \, \gamma^{s_0-1}   \Vert  \bcU^{s_1} \bv \Vert_{1,k,\partial} \nonumber \\
                                                              & \leq \rho\,  \gamma^{s_0 + s_1 - 2}    \Vert \bcU \bv \Vert_{1,k,\partial} \leq \sqrt{\gamma^2 + \rho^2} \, \rho \, 
                                                                \gamma^{N-2}    \Vert \bv \Vert_{1,k,\partial}\, .   \nonumber \end{align}}
  Hence, combining  \eqref{eq:notproved} and \eqref{eq:composite},
   $$ \Vert \bcT^N\bv \Vert_{1,k,\partial}  
   \  \leq  \ 2 \sqrt{\gamma^2 + \rho^2} \, \left[ \sum_{j=1}^{N-1} \left(\begin{array}{c} N-1\\j \end{array} \right) \,  \rho^j\,  \gamma^{N-1-j} \right] \|\bv\|_{1,k,\partial}  , $$
   and an application of the Binomial Theorem gives   \eqref{eq:TLU}. 
\end{proof} 

\begin{corollary}[Estimate of  $\cT^N$, useful for  $\rho$ small]\label{coro:TLU}  Assume $\rho \leq \rho_0\leq \gamma$ \ig{and $N \geq 3$}. For any $\bv\in \bbU_0,$
\begin{align} \label{eq:rough}
\|\bcT^N \bv  \|_{1,k,\partial } \ \le \ \left(\left[2\sqrt{2}    \gamma^{N-1} (N-1)\right] \rho    \ + \ C(N,\gamma) \rho^2  \right) \| \bv \|_{1,k,\partial}, 
\end{align}
where $C(N,\gamma) :=\sqrt{2} (N-1)(N-2) \gamma(\gamma+\rho_0)^{N-3}$. 
Thus, if $\rho $ is small (relative to $\gamma$ and $N$),  then $\bcT^N$ is a contraction.
\end{corollary}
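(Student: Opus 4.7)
The plan is to derive the estimate \eqref{eq:rough} directly from the bound \eqref{eq:TLU} of Theorem \ref{thm:TLU} by splitting off the linear-in-$\rho$ term from the remainder. Concretely, I would expand $(\gamma+\rho)^{N-1}$ around $\gamma$, writing
\begin{equation*}
(\gamma+\rho)^{N-1} - \gamma^{N-1} \ =\ (N-1)\gamma^{N-2}\rho \ +\ \sum_{k=2}^{N-1}\binom{N-1}{k}\gamma^{N-1-k}\rho^{k},
\end{equation*}
so that the first term contributes the $O(\rho)$ part of \eqref{eq:rough} and the sum contributes the $O(\rho^{2})$ part.

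To control the sum, I would use $\rho \leq \rho_{0}$ to replace the $k\geq 3$ factors of $\rho$ by $\rho_{0}$, which gives (via the binomial theorem applied to $(\gamma+\rho_{0})^{N-3}$, recognising this as a second-order Taylor remainder)
\begin{equation*}
\sum_{k=2}^{N-1}\binom{N-1}{k}\gamma^{N-1-k}\rho^{k} \ \leq \ \frac{(N-1)(N-2)}{2}(\gamma+\rho_{0})^{N-3}\rho^{2}.
\end{equation*}
Next, since $\rho\leq\rho_{0}\leq\gamma$, the prefactor $2\sqrt{\gamma^{2}+\rho^{2}}$ in \eqref{eq:TLU} satisfies $2\sqrt{\gamma^{2}+\rho^{2}}\leq 2\sqrt{2}\gamma$. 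Multiplying the two pieces out and collecting terms yields exactly the coefficients $2\sqrt{2}(N-1)\gamma^{N-1}$ in front of $\rho$ and $\sqrt{2}(N-1)(N-2)\gamma(\gamma+\rho_{0})^{N-3} = C(N,\gamma)$ in front of $\rho^{2}$, as required.

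For the final claim on contractivity, the bracketed quantity in \eqref{eq:rough} is a polynomial in $\rho$ without a constant term, with coefficients depending only on $N$ and $\gamma$ (and the fixed $\rho_{0}$). Hence, for $\rho$ small enough relative to $N$ and $\gamma$, this quantity is strictly less than $1$, which means $\bcT^{N}$ is a contraction on $\bbU_{0}$ in the $\|\cdot\|_{1,k,\partial}$ norm. No step is really an obstacle here; the only thing to be careful about is the Taylor-remainder bound, which I would either verify by the explicit binomial identity above or justify by the Lagrange form of the remainder applied to $x\mapsto(\gamma+x)^{N-1}$ on $[0,\rho]\subset[0,\rho_{0}]$.
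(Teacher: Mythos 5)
Your proof is correct and follows essentially the same route as the paper: starting from Theorem \ref{thm:TLU}, isolating the linear-in-$\rho$ term from $(\gamma+\rho)^{N-1}-\gamma^{N-1}$, bounding the remainder by the second-order Taylor term evaluated at $\rho_0$ (the paper invokes the Lagrange form of the remainder and monotonicity of $x\mapsto(\gamma+x)^{N-3}$ on $[0,\rho_0]$, while you give the equivalent explicit binomial-coefficient estimate), and using $\rho\leq\gamma$ to replace $2\sqrt{\gamma^2+\rho^2}$ by $2\sqrt{2}\gamma$. The two presentations are interchangeable and lead to the same constants.
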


   \begin{proof}[Proof of Corollary \ref{coro:TLU}]
By Theorem \ref{thm:TLU}, Taylor's theorem, and the fact that $\rho\le \gamma,$ 
\begin{align*}
  \|\bcT^N \bv \|_{1,k,\partial } 
  \le 2 \sqrt{2} \gamma \left((N-1) \gamma^{N-2}\rho + \frac{(N-1)(N-2)}{2} (\gamma + \rho)^{N-3} \rho^2\right)\|\bv \|_{1,k,\partial },
\end{align*}
where we have used the fact that the function $x\mapsto (\gamma+x)^{N-3}$ is increasing on $[0,\rho_0]$ to bound the Taylor-theorem remainder.
  \end{proof}

Corollary \ref{coro:TLU} provides  an estimate for $\Vert \cT^N\Vert_{1,k,\partial}$ that is first order in $\rho$. An estimate with a higher order in $\rho$ can be obtained by considering higher powers of $\cT$.

\begin{corollary} [Estimate of higher powers of $\cT^N$] \label{coro:higherrho}
For $s\geq 1$, and $\bv \in \bbU_0$, 
$$\|\bcT^{sN} \bv  \|_{1,k,\partial } \ \le \ 2 \sqrt{\gamma^2 + \rho^2} \, \left[ \sum_{j=s}^{sN-1} \left( \begin{array}{c} sN-1 \\ j \end{array} \right) \gamma^{sN-1-j} \rho^j \right] \,  \| \bv \|_{1,k,\partial}.  
$$
  \end{corollary}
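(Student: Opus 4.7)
My plan is to mimic the proof of Theorem \ref{thm:TLU}, but apply the general expansion of Theorem \ref{thm:sumsumLU} with $n=sN$ rather than $n=N$, and then exploit the nilpotency \eqref{eq:nilpotent} more aggressively to kill all terms with fewer than $s$ transitions between $\bcL$ and $\bcU$.

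First I would write, by Theorem \ref{thm:sumsumLU},
\begin{equation*}
\bcT^{sN} \;=\; \sum_{j=0}^{sN-1}\sum_{p\in\cP(sN,j)} p(\bcL,\bcU).
\end{equation*}
The key observation is that any monomial $p\in\cP(sN,j)$ is a product of $j+1$ consecutive blocks of $\bcL$'s or $\bcU$'s whose exponents $s_0,\ldots,s_j$ are positive integers summing to $sN$. If $j\le s-1$, then $j+1\le s$, so by the pigeonhole principle at least one exponent $s_i$ satisfies $s_i\ge sN/(j+1)\ge N$. Hence $p(\bcL,\bcU)$ contains a factor $\bcL^{s_i}$ or $\bcU^{s_i}$ with $s_i\ge N$, and \eqref{eq:nilpotent} forces $p(\bcL,\bcU)=\mathbf{0}$. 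Therefore the summation effectively starts at $j=s$:
\begin{equation*}
\bcT^{sN}\bv \;=\; \sum_{j=s}^{sN-1}\sum_{p\in\cP(sN,j)} p(\bcL,\bcU)\bv.
\end{equation*}

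Next, I would recycle the estimate \eqref{eq:notproved} from the proof of Theorem \ref{thm:TLU}, whose induction argument (alternating applications of the two estimates in \eqref{48b} and \eqref{48a}, closed out by \eqref{48c}) works verbatim for monomials of any total degree $n$ in place of $N$. This yields, for every $p\in\cP(sN,j)$ with $j\ge 1$ and every $\bv\in\bbU_0$,
\begin{equation*}
\Vert p(\bcL,\bcU)\bv\Vert_{1,k,\partial} \;\le\; \sqrt{\gamma^2+\rho^2}\,\rho^j\,\gamma^{sN-1-j}\,\Vert\bv\Vert_{1,k,\partial}.
\end{equation*}

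Finally, combining the above with the cardinality formula \eqref{eq:count}, namely $\#\cP(sN,j)=2\binom{sN-1}{j}$, and applying the triangle inequality, gives
\begin{equation*}
\Vert\bcT^{sN}\bv\Vert_{1,k,\partial}\;\le\;2\sqrt{\gamma^2+\rho^2}\left[\sum_{j=s}^{sN-1}\binom{sN-1}{j}\gamma^{sN-1-j}\rho^j\right]\Vert\bv\Vert_{1,k,\partial},
\end{equation*}
which is the claimed bound. The only genuinely new step beyond Theorem \ref{thm:TLU} is the pigeonhole-plus-nilpotency argument that raises the lower limit of $j$ from $1$ to $s$; this is the main (and essentially only) obstacle, and it is clean once phrased correctly. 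Everything else is algebraic bookkeeping that is already in place.
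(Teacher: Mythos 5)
Your proposal is correct and takes essentially the same approach as the paper: apply the expansion of $\bcT^{sN}$, use nilpotency \eqref{eq:nilpotent} to kill all monomials with $j\le s-1$ transitions (the paper asserts these must contain a string of length $\ge N$, which your pigeonhole argument simply makes explicit), and then re-use the per-monomial bound \eqref{eq:notproved} and the cardinality count \eqref{eq:count} exactly as in Theorem \ref{thm:TLU}.
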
 

  \begin{proof}
\ig{The proof uses estimate  \eqref{eq:composite} with $n = sN$.}  Consider any  monomial $p \in \cP(sN , j)$ with $j \leq s-1$. This is a monomial of order $sN$ with $j \leq s-1$ transitions from $x$ to $y$ or from $y$ to $x$. Thus it must contain at least one string of length $\geq N$ without jumps. (For example, any   $p \in \cP(2N,1)$ must contain one string of length $\geq N$ without a jump.) Hence, using \eqref{eq:nilpotent}, 
    $$ p(\bcL, \bcU) = 0 \quad \text{for all} \quad p \in \cP(sN , j) \quad \text{when } \quad j \leq s-1.$$
    and thus the result follows in a similar way to that in the proof of Theorem \ref{thm:TLU}. \end{proof}

\subsubsection{Estimating  the canonical map $ \rI_{-+}$ using semiclassical analysis}
\label{subsec:SCA}

Theorem \ref{thm:TLU} and Corollaries \ref{coro:TLU} and \ref{coro:higherrho} show that convergence of the iterative method improves as $\rho$ gets smaller. We now describe results from \cite{LaSp:21} that give sharp bounds on the large-$k$ limit of $\rho$ (with other parameters, such as $\delta$, fixed).

In the canonical domain (Figure \ref{fig:hat}) for the strip decomposition, the impedance boundary conditions on the top and bottom sides are due to the (outer) impedance boundary condition \eqref{eq:BC}, and the impedance boundary conditions on the left and right sides are due to the (inner) impedance boundary conditions imposed by the domain-decomposition algorithm.

For simplicity, \cite{LaSp:21} considers the case when the (outer) boundary condition \eqref{eq:BC} is replaced by a condition
that the solution is ``outgoing'' (in a sense made precise by the notion of the \emph{wavefront set}); i.e., that no outgoing rays hitting $ \pwOmega$ are reflected. Studying this situation therefore focuses on the effect of the impedance boundary conditions coming from the domain decomposition, and ignores the effect of any high-frequency reflections from absorbing boundary conditions on $\pwOmega$ (see 
\cite{GaLaSp:20} for a precise description of these reflection effects).
The outgoing condition replacing \eqref{eq:BC} is, in some sense, the ideal absorbing boundary condition at high frequency on $\pwOmega$. Since perfect matched layers approximate the outgoing condition exponentially well at high frequency \cite{GaLaSp:21}, we expect that the results of \cite{LaSp:21} will also hold when the outgoing condition is replaced by perfectly matched layers (and this is work in progress).
  
 The paper \cite{LaSp:21} considers the following two model problems. 

\emph{Model Problem 1:} the canonical problem specified in Definition \ref{def:imp_unit} with outgoing conditions on the top and bottom, impedance data posed on the left, and zero impedance data on the right (i.e., that discussed above), and 

\emph{Model Problem 2:}
 the canonical problem with outgoing conditions on the top, bottom, and right sides, and impedance data posed on the left. 

Model Problem 2 is the canonical problem for the strip-decomposition algorithm applied with two subdomains when the global problem is \eqref{eq:Helm} with outgoing boundary conditions. The reason for considering this further simplification is that in Model Problem 1 a ray moving from left to right can still be reflected an infinite number of times,
and the reflection coefficient on $\Gamma^-$ depends on the data; 
thus  an upper bound for general impedance data in this situation is more challenging to prove.

\paragraph{Upper and lower bounds on $\| \rI_{-+}\|$ for Model Problem 2.}

Let $\wOmega$ be the canonical domain of Figure \ref{fig:hat}, so that 
$\Gamma^- := \{0\} \times [0,1].$ Let $\Gamma_\delta := \{\delta\} \times [0,1]$ and we define 
$\Gamma_{\rm out}:= \partial {\wOmega} \backslash \Gamma^-$ (the subscript ``out'' indicates that this part of the boundary has the ``outgoing'' condition on it).
Given $g\in L^2(\Gamma^-)$, let $u$ be the solution to 
\begin{equation} \label{eq:model_cell}
\begin{cases}
(\Delta + k^2) u = 0 \quad \text{ in } \{ x_1>0 \} \\
(-\partial_{x} -\ri  k) u = g \quad \text{ on } \Gamma^-, \\
\text{$u$ is outgoing near $\Gamma_{\rm out}$},
\end{cases}
\end{equation}
where the outgoing condition near $\Gamma_{\rm out}$ is defined in terms of the wavefront set, as will
be explained in  \cite{LaSp:21}.
In analogy with \eqref{eq:maps}, $\rI_{-+}: L^2(\Gamma^-) \rightarrow L^2(\Gamma_\delta)$ is defined by 
\beq\label{eq:David_imp_def}
  \rI_{-+} g:= \partial_{x} u - \ri k u \quad \ton \Gamma_\delta.
\eeq

\begin{theorem}[Upper and lower bounds on $\|\rI_{-+}\|$ for Model Problem 2 from \cite{LaSp:21}]\label{th:EandD}
Let
\beq\label{eq:thetamax}
\theta_{\rm max} := \arctan (\delta^{-1}).
\eeq
Then, for any $\epsilon>0$, there exists $k_0(\epsilon)>0$ such that, for all $k\geq k_0$,
\beq\label{eq:David1}
\N{\rI_{-+}}_{L^2(\Gamma^-)\rightarrow L^2(\Gamma_\delta)} \leq 
\frac{1-\cos \theta_{\rm max}}{1 + \cos \theta_{\rm max}} + \epsilon.
\eeq
Furthermore, for any $0 < \epsilon' < \theta_{\rm max}$,
\beq\label{eq:David2}
\lim_{k\tendi} \N{\rI_{-+}}_{L^2(\Gamma^-)\rightarrow L^2(\Gamma_\delta)} \geq 
 \frac{1-\cos (\theta_{\rm max} - \epsilon')}{1 + \cos (\theta_{\rm max}-\epsilon')}.
\eeq
\end{theorem}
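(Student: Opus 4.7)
The plan is to prove both bounds by a semiclassical (geometric optics) analysis in the parameter $h = 1/k \to 0$. The key geometric observation is that a ray starting from a point $(0, y_0) \in \Gamma^-$ and travelling at angle $\theta$ from the horizontal reaches the vertical line $\{x = \delta\}$ at height $y_0 + \delta \tan \theta$; staying inside the strip $[0,1]$ in the $y$-direction forces $|\tan\theta| \le 1/\delta$, i.e.\ $|\theta| \le \theta_{\max}$. Rays with $|\theta| > \theta_{\max}$ exit through the top or bottom before hitting $\Gamma_\delta$, and under the outgoing condition they are lost without return. A direct plane-wave computation with $u(x,y) = \exp(\ri k(x\cos\theta + y\sin\theta))$ shows that a single plane wave at angle $\theta$ gives exactly
\[
\frac{\|(\partial_x - \ri k) u\|_{L^2(\Gamma_\delta)}}{\|(-\partial_x - \ri k) u\|_{L^2(\Gamma^-)}} \ = \ \frac{1 - \cos\theta}{1 + \cos\theta}.
\]
Hence the whole theorem amounts to saying that in the limit $k \to \infty$ the norm of $\rI_{-+}$ is controlled by plane-wave contributions whose angles $\theta$ lie in the admissible cone $|\theta| < \theta_{\max}$.

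For the lower bound I would construct a family of Gaussian (coherent-state) wave packets concentrated near a boundary point $(0, y_0)$ with momentum $\xi_0 = \sin\theta_0$, where $\theta_0 := \theta_{\max} - \epsilon'$ and $y_0$ is chosen so that the ray from $(0,y_0)$ at angle $\theta_0$ reaches $\Gamma_\delta$ strictly inside $[0,1]$. Taking as impedance data $g_h(y) := \chi_h(y)\exp(\ri k\sin\theta_0\, y)$ with a suitably $h$-scaled Gaussian cut-off $\chi_h$, one verifies (using the semiclassical parametrix of the outgoing Helmholtz problem near $\Gamma^-$) that the solution $u_h$ is, up to $O(h^\infty)$ errors outside the classical flow-out, exactly the plane wave $\exp(\ri k(x\cos\theta_0 + y\sin\theta_0))$ multiplied by the transported Gaussian. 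The plane-wave computation then gives $\|\rI_{-+} g_h\|/\|g_h\|\to(1-\cos\theta_0)/(1+\cos\theta_0)$ as $k\to\infty$, establishing \eqref{eq:David2}.

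For the upper bound I would use the semiclassical wavefront-set decomposition of the boundary data. Writing $\xi = \sin\theta$ and using a semiclassical pseudodifferential partition of unity on $\Gamma^-$, I would split $g = g_{\rm evan} + \sum_j g_{\theta_j}$, with $g_{\rm evan}$ microlocalized in $|\xi|\ge 1 + h^\alpha$ (evanescent) and the $g_{\theta_j}$ microlocalized in narrow angular windows around propagating angles. The evanescent part contributes $O(h^\infty)$ to $\|\rI_{-+}g\|_{L^2(\Gamma_\delta)}$ by exponential decay. Each propagating piece $g_{\theta_j}$ produces a wave that, up to $O(h^\infty)$, is transported along straight rays at angle $\theta_j$. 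For $|\theta_j| > \theta_{\max}$ (plus an arbitrarily small buffer) the rays exit through $\Gamma_{\rm out}$ before reaching $\Gamma_\delta$; by the outgoing condition (defined via $\mathrm{WF}_h$) this contribution has no wavefront set on $\Gamma_\delta$ and is therefore $O(h^\infty)$. For $|\theta_j|\le\theta_{\max}$ the plane-wave formula together with almost-orthogonality of the microlocal pieces yields
\[
\|\rI_{-+} g\|_{L^2(\Gamma_\delta)}^2 \ \le \ \max_{|\theta|\le \theta_{\max}}\!\Bigl(\tfrac{1-\cos\theta}{1+\cos\theta}\Bigr)^{\!2}\|g\|_{L^2(\Gamma^-)}^2 \,+\, o(1),
\]
which, since the function $\theta \mapsto (1-\cos\theta)/(1+\cos\theta)$ is increasing on $[0,\pi/2)$, gives \eqref{eq:David1}.

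The principal obstacle is the rigorous microlocal treatment of the outgoing boundary condition on the corners of $\wOmega$ and its interplay with the impedance boundary $\Gamma^-$: one must justify, for the genuinely diffractive geometry of a rectangle with mixed boundary conditions, a $k$-uniform parametrix that propagates semiclassical $L^2$ mass along the billiard-free flow from $\Gamma^-$ to $\Gamma_\delta$, controls the reflection coefficient angle-by-angle by the plane-wave formula above, and shows that rays escaping through $\Gamma_{\rm out}$ (the outgoing side) produce no wavefront set on $\Gamma_\delta$. This requires careful use of Egorov's theorem for the propagator, trace estimates on the interior interface $\Gamma_\delta$, and control of the tangential (grazing) directions $|\theta|\approx \pi/2$, which is why the upper bound is stated with an arbitrarily small slack $\epsilon$ rather than equality.
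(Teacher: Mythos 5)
This theorem is imported from the companion paper \cite{LaSp:21}; the present paper states it but does not prove it, giving only the heuristic sketch in the paragraph ``The idea behind Theorem~\ref{th:EandD}.'' Your proposal follows that heuristic closely: the plane-wave ratio $(1-\cos\theta)/(1+\cos\theta)$ and the observation that rays reaching $\Gamma_\delta$ from $\Gamma^-$ have angle at most $\theta_{\max}$ are exactly the ingredients the paper highlights, and for the lower bound the paper indeed describes data concentrated at a boundary point travelling at an angle close to $\theta_{\max}$, formalized there via semiclassical defect measures (your Gaussian coherent-state construction is the standard alternative and would serve the same purpose).

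Two caveats. First, Model Problem~2 is posed on the half-plane $\{x_1>0\}$ with $g$ supported on the bounded interval $\Gamma^-$ and an outgoing condition everywhere else; your ``obstacle'' paragraph describes the diffractive geometry of a rectangle with mixed boundary conditions, which is closer to Model Problem~1 (where the paper explains the upper bound is harder precisely because rays can reflect infinitely often between $\Gamma^-$ and $\Gamma^+$ and the reflection coefficient on $\Gamma^-$ depends on the data). In Model Problem~2 the right side is outgoing, so the genuine difficulties are the corner-type transitions at the endpoints of $\Gamma^-$, where the impedance condition meets the outgoing condition, and uniform control up to grazing angles --- both of which you do flag. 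Second, your upper bound hinges on the outgoing condition converting into ``rays that exit through $\Gamma_{\rm out}$ leave no wavefront set on $\Gamma_\delta$''; this is indeed the microlocal content of the outgoing condition as defined in \cite{LaSp:21}, but making that step rigorous, along with the almost-orthogonality of the angular decomposition and the trace estimates on the interior interface, is precisely the technical work deferred to that reference. Your sketch is consistent with the heuristic in the paper, but it cannot be regarded as self-contained without the machinery of \cite{LaSp:21}, and the paper itself does not supply a proof against which to check the details.
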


Observe that there exist $C_1, C_2>0$ such that
\beq\label{eq:thetabound}
C \delta^{-2}\leq C_1 (\theta_{\rm max})^2 
\leq 
\frac{1-\cos (\theta_{\rm max})}
{1+\cos (\theta_{\rm max})}
\leq 
C_2 (\theta_{\rm max})^2 \leq C \delta^{-2}
\eeq
and thus Theorem \ref{th:EandD} shows that $\lim_{k\tendi} \|\rI_{-+}\|_{L^2(\Gamma^-)\rightarrow L^2(\Gamma_\delta)}$ is bounded above and below by multiples of $(\theta_{\max})^2$, and hence multiples of $\delta^{-2}$,
where we recall that   $\delta$ is the distance of  $\Gamma_\delta$ from $\Gamma^-$.

\textbf{The idea behind Theorem \ref{th:EandD}.} 
The tools of semiclassical/microlocal analysis decompose solutions of PDEs in both frequency and space variables.
 These tools show that, at high-frequency, Helmholtz solutions  
propagate along 
the rays of geometric optics, in the sense that the wavefronts are perpendicular to the ray direction. 
The ideas behind Theorem \ref{th:EandD} can therefore be understood by first looking at the impedance-to-impedance map for plane-wave solutions of the Helmholtz equation (since these are simple Helmholtz solutions travelling along rays),
ignoring the fact that these do not satisfy the outgoing condition on all of $\Gamma_{\rm out}$, and thus are not solutions of Model Problem 2.

Let $u$ be a plane-wave in $\mathbb{R}^2$ with direction $(\cos\theta,\sin\theta)$ (i.e., propagating at angle $\theta$ to the horizontal), i.e.,
\beqs
u(x,y) = \exp\big(\ri k (x \cos\theta + y \sin \theta)\big).
\eeqs
Then
\begin{align*}
(-\partial_{x} - \ri k )u|_{\Gamma^-}&= \ri k (-\cos\theta-1) \exp\big(\ri k  y \sin\theta\big), \\
(\partial_{x} - \ri k )u|_{\Gamma_\delta}&= \ri k (\cos\theta-1) \exp\big(\ri k (\delta \cos\theta + y \sin\theta)\big),
\end{align*}
so that, for this class of $u$, 
\beq\label{eq:ratio}
\frac{
\N{(\partial_{x} - \ri k )u}_{L^2(\Gamma_\delta)}
}{
\N{(-\partial_{x} - \ri k )u}_{L^2(\Gamma^-)}
}
=
\frac{1-\cos\theta}{1+\cos\theta}.
\eeq
We now use \eqref{eq:ratio} as a heuristic for the behaviour of the impedance-to-impedance map on solutions of Model Problem 2 travelling on rays at angle $\theta$ to the horizontal.
Since the solution of Model Problem 2 is outgoing on $\Gamma_{\rm out}$, anything reaching $\Gamma_\delta$ must arrive on a ray emanating from $\Gamma^-$ and hitting $\Gamma_\delta$, and the maximum angle such rays can 
have with the horizontal satisfies $\tan \theta_{\max} =\delta^{-1}$; see Figure \ref{fig:hat}.
The right-hand side of \eqref{eq:ratio} is largest when $\theta=\theta_{\max}$, with this expression then (modulo the presence of $\epsilon$ and $\epsilon'$) the right-hand sides of \eqref{eq:David1} and \eqref{eq:David2}. 

The arguments in \cite{LaSp:21} use these ideas in a rigorous way; for example, to prove the lower bound \eqref{eq:David2}, we take a sequence of data $(g(k))_{k>0}$ where the Helmholtz solutions it creates are concentrated at high frequency in a beam coming from one point of $\Gamma^-$ and traveling in one direction $(\cos \theta, \sin \theta)$, and we take $\theta$ to be arbitrarily close to $\theta_{\rm max}$. The notion of concentration at high frequency is understood in a rigorous way using so-called \emph{semiclassical defect measures}; see \cite[\S9.1]{LaSpWu:19a} for an informal overview of these, and \cite[Chapter 5]{Zw:12}, \cite{Bu:02}, \cite{Mi:00}, \cite{GaSpWu:20}, \cite{GaLaSp:21}.

Finally, we highlight that these ray arguments and angle considerations are similar to those in \cite[\S5]{GaHa:05} used to optimise boundary conditions in domain decomposition for the wave equation.
  
\subsubsection{Estimating higher order products of  $\cL$ and $\cU$}\label{sec:composite}

The estimates in Theorem \ref{thm:TLU} and Corollaries  \ref{coro:TLU} and \ref{coro:higherrho} use  Lemma \ref{lm:TLU} repeatedly to bound $\Vert \cT^n\Vert_{1,k,\partial}$  in terms of powers of  $\rho$ and $\gamma$. For example, to bound the term
 $
 \bcL^{s}\bcU 
 $
for an integer $s > 0$, the argument in Theorem \ref{thm:TLU}  uses  \eqref{48b}--\eqref{48c} to obtain
\begin{equation}\label{eq:compositeLU}
\| \bcL^{s}\bcU  \|_{1,k,\partial} \ \le \ \gamma \| \bcL^{s-1}\bcU  \|_{1,k,\partial}  \ \le \ \gamma^2 \| \bcL^{s-2}\bcU  \|_{1,k,\partial}\ \le\  \cdots \ \le \   \sqrt{\gamma^2 + \rho^2} \, \gamma^{s-1} \,  \rho .
\end{equation}
Thus  if $\rho$ is  controllably small,  Corollary \ref{coro:TLU} implies   power contractivity for  $\cT$. The use of Corollary \ref{coro:TLU} is illustrated in  
Experiment \ref{Expt1} below, which  shows that the convergence rate of the domain decomposition method improves  as $\rho$ decreases.
However, we expect that estimates like  that in Corollary \ref{coro:TLU}  {are not} in general sharp.
In particular, looking at the case $k = 80$ in  Figure \ref{fig:error} and Table \ref{tb:composite-imp1} of \S \ref{sec:numerical} we see a case when $\rho \approx 0.15$, but 
the method converges effectively for $N = 4,8,16$, even though \eqref{eq:rough} grows linearly in $N$.
Thus, we expect that sharper results  
may be possible  by
bounding composite maps such as $\bcL^{s} \bcU$ directly, rather than estimating each of their factors, as in
\eqref{eq:compositeLU}. 
In fact, in \cite{LaSp:21}, ray arguments are used to give insight into the behaviour of these composite maps in the $k\tendi$ limit, and these arguments do indeed indicate that the compositions of the maps behave better than the products of the norms of the   individual components. 

To illustrate the use of composite maps,  we consider  the dominant ($j=1$) 
term in 
\eqref{eq:composite}: 
\begin{align}  2 (N-1) \max_{p \in \cP(N,1)} \Vert p(\bcL, \bcU) \Vert_{1,k,\partial} . \label{eq:first_order} 
\end{align} 
 One of the terms appearing inside  the maximum corresponds to  $p(\bcL, \bcU) = \bcL^{N-1} \bcU$. 
This operator is blockwise very sparse; for $N\geq 2$ all its nonzero blocks lie on the $(N-2)$th diagonal below the main diagonal (see 
\eqref{products} for the case $N=2$). The  $(N,2)$th element of $\bcL^{N-1}\bcU$ is     
\begin{align} \label{eq:sentry} 
\cT_{N,N-1}\cT_{N-1,N-2}\cdots\cT_{3,2}\cT_{2,1}\cT_{1,2} = \left(\prod_{j=1}^{N-1} \cT_{j+1,j}  \right) \cT_{1,2},
\end{align} 
where the operator product is understood as concatenated on the left  as the counting index $j$ increases. 

Rewriting \eqref{eq:impit2new}   using the notation \eqref{notstrip}, we see that, for any $s$, 
$$ \imp_{N} (\cT_{N,N-1} \cT_{N-1, N-2} z_{N-2} ) = \Imaps{N-1}{-}{N}{-}  \, \imp_{N-1} ( \cT_{N-1,N-2} z_{N-2}).  $$
A straightforward  induction argument {then shows that} 
\begin{align} 
\label{eq:induction} \imp_{N} \left(\left(\prod_{j=1}^{N-1} \cT_{j+1,j}  \right) \cT_{1,2}z_2\right)\  =\  \left(\left(\prod_{j=2}^{N-1} \Imaps{j}{-}{j+1}{-}\right) \Imaps{1}{+}{2}{-}\right)  \, 
\imp_1 ( \cT_{1,2} z_{2}) .  
\end{align}
In Experiment \ref{Expt3} we use \eqref{eq:induction} to  compute the norm of the composite operator $\bcL^{N-1} \bcU $
directly.

\section{Finite-element approximations}
\label{sec:fem}

In this section we describe how we use finite-element computations to illustrate our theoretical results.
Due to space considerations, we restrict here to a description of  algorithms and brief remarks  on  finite-element convergence; more details are in  \cite{GoGrSp:21}.

For any domain $\Omega$, let  ${T}^h$ be a  family of  shape-regular   
meshes  on   $\Omega$ with mesh diameter $h \rightarrow 0$. We assume each mesh resolves the boundaries of all subdomains.
Let ${V}^h$ be an $H^1$-conforming nodal finite-element space of polynomial degree $p$ defined with respect to
${T}^h$. For any  subset (domain or surface) $\Lambda$ that is resolved by $T^h$, we define 
$\mathrm{V}^h(\Lambda) = \{ w_h\vert_\Lambda : w_h \in V^h\}$. We let  $ {N}(\Lambda)$ denote the set of
nodes of the space $V^h$ that lie in $\Lambda$.

\subsection{The iterative method}
\label{subsec:iterative} 
Here we describe the computation of finite-element approximations of the iterates $u^n$ defined in \eqref{eq21} -- \eqref{star}.
With   $a$   as   in \eqref{eq:sesq1},  and for any $F \in H^1(\Omega)'$, 
we consider  finding $u \in H^1(\Omega)$ satisfying
\begin{align} \label{eq:weakfe}a(u,v) \ =\ F(v) \quad \text{for all} \quad v \in H^1(\Omega);
 \end{align} 
this includes the weak form of \eqref{eq:Helm}, \eqref{eq:BC} as a special case.
To discretize \eqref{eq:weakfe}, we define $\cA_h:V^h \mapsto (V^h)'$ and $F_h \in V_h'$ by
$
  (\cA_h u_h)(v_h) : = a(u_h,v_h)$ \text{and}  $F_h(v_h) :=  F(v_h)$  for  $u_h , v_h \in V^h$.   
  The  finite-element solution $u_h\in V^h$ of  \eqref{eq:weakfe} satisfies
  \begin{align} \label{eq:weakfeh} \cA_h u_h = F_h . \end{align}

  To formulate the discrete version of  \eqref{eq21} -- \eqref{star} on each subdomain $\Omega_\ell$,
  we introduce the  local space
$V^h_\ell: = V^h(\Omega_\ell)$,   
 and define the local operators $\cA_{h,\ell}: V^h_\ell \rightarrow (V^h_\ell)'$ by
  $(\cA_{h,\ell}u_{h,\ell})(v_{h,\ell}) : = a_\ell(u_{h,\ell}, v_{h,\ell}),$
with $a_\ell$ as defined in \eqref{eq:sesq2}.  
We also introduce  prolongations $\cR_{h,\ell} ^\top, \tcR_{h, \ell}^\top : V_\ell^h \rightarrow V^h$ defined for all $v_{h,\ell} \in V^h_\ell$ by
\begin{equation}\label{nodewise}
(\cR_{h,\ell} ^\top  v_{h,\ell})(x_j)  = \left\{ \begin{array}{ll} v_{h,\ell} (x_j) & \quad {x_j \in N(\overline{\Omega_\ell})}, \\ 
                                          0 & \quad \text{otherwise, }\end{array} \right.  \quad \text{and} \quad \tcR_{h,\ell}^\top v_{h,\ell}  = \cR_{h,\ell}^\top (\chi_\ell v_{h,\ell}). 
\end{equation}
Note that the  extension  $\cR_{h,\ell}^\top v_{h,\ell} \in V^h $ is defined {\em nodewise}: it  coincides with $v_{h,\ell}$ at  nodes in  $\overline{\Omega_\ell}$ and vanishes at nodes in $\Omega\backslash \overline{\Omega_\ell}$. Thus $\cR_{h,\ell} ^\top  v_{h,\ell} \in H^1(\Omega)$  is  a  finite-element approximation of the operator of extension by zero,  even though the (true) extension by zero does not, in general, map $H^1(\Omega_\ell)$ to $H^1(\Omega)$. We define the restriction operator $\cR_{h,\ell}: V_h' \rightarrow V_{h, \ell}'$  by duality, i.e., for all
$F_h \in V_h'$,  $$ (\cR_{h,\ell}F_h)(v_{h,\ell}) := F_{h}(\cR_{h,\ell}^\top v_{h,\ell}), \quad v_{h, \ell} \in V_\ell^h .$$ 

It is shown in \cite{GoGaGrSp:21} that a natural discrete analogue of \eqref{eq21}-\eqref{star} is
\begin{align} \label{eq:discT}
  u_{h, j}^{n+1} \ : = \  u_h^n|_{\Omega_j}  +  \cA_{h,j}^{-1} \cR_{h,j} (F_h - \cA_h  u_h^n) \quad \text{for} \quad j = 1, \ldots , N, \quad \quad  n= 1,2,\ldots,
  \end{align} 
  \begin{align} \label{eq:update}
\text{where} \qquad    u_h^n = \sum_\ell {\tcR_{h,\ell}^\top} u_{h,\ell}^n \quad \text{for } \ n = 0,1, \ldots. 
     \end{align}

       The algorithm \eqref{eq:discT}, \eqref{eq:update}  is derived in  \cite{GoGaGrSp:21} as a finite-element approximation of \eqref{eq21}--\eqref{star}.   
  In fact \eqref{eq:discT}, \eqref{eq:update} is equivalent to the well-known  Restricted Additive Schwarz method with impedance transmission condition (also known as WRAS-H \cite{KiSa:07} and  ORAS \cite[Definition 2.4]{DoJoNa:15} and \cite{StGaTh:07}). 

  Moreover, since  $u_h$  is the exact solution of \eqref{eq:weakfe}, we have, trivially, 
\beq\label{eq:trivial}
u_h \vert_{\Omega_j} = u_h \vert_{\Omega_j} + \cA_{h,j}^{-1} \cR_{h,j}(F_h - \cA_h u_h).
\eeq
 The error is then
  $\mathbf{e}_{h}^n := (e_{h,1}^n, e_{h,2}^n,\cdots,e_{h,N}^n)^\top,$ where $ e_{h,\ell}^n = u_h|_{\Omega_\ell} - u_{h,\ell}^n.$
      Subtracting  \eqref{eq:discT} from \eqref{eq:trivial}, 
       we obtain the error equation
    \begin{align} \label{eq:erroreq}
  e_{h, j}^{n+1} \ : = \  e_h^n|_{\Omega_j}  -  \cA_{h,j}^{-1} \cR_{h,j}  \cA_h  e_h^n \quad \text{for} \quad j = 1, \ldots , N,  \quad \text{where} \quad    e_h^n := \sum_\ell {\tcR_{h,\ell}^\top} e_{h,\ell}^n  . 
     \end{align} 
     The two expressions in \eqref{eq:erroreq} can be combined and written in  the operator matrix form:  \begin{align} \label{eq:disc_err} \be_h^{n+1} = \cT_h \be_h^n, \end{align}  providing a finite element analogue  of \eqref{eq:errit}. The matrix form of $\cT_h$  is discussed in \cite[\S5]{GoGaGrSp:21}.
   
  In \S \ref{sec:numerical} we plot error histories for this method. To do this, we need to choose a suitable norm in which to measure the error.
  Since ${e}_{h,\ell}^n \approx {e}^n_\ell \in U_0(\Omega_\ell)$ {(defined in Definition \ref{def:U})}, 
  it is natural to try to analyse $e_{h,\ell}^n$ in a finite-element  analogue of $U_0(\Omega_\ell)$. In fact, one can show   that, for each $n$,  
$$
e_{h,\ell}^n \in V^h_{\ell,0} \ :=\ \{v_{h,\ell}\in V^h_\ell~:~ a_\ell(v_{h,\ell}, w_{h,\ell}) = 0 \text{ for any }
w_{h,\ell}\in V^h_\ell\cap H^1_0(\Omega_\ell) \},
$$
which indicates that the error is `discrete Helmholtz harmonic'. Therefore we define the norm: 
   \begin{align}\nonumber
     \|v_{h,\ell}\|_{V_{\ell, 0}^h } \ &:= \sup_{w_{h,\ell} \in V_\ell^h, w_{h,\ell} |_{\partial \Omega_\ell} \neq 0}
                                                     \frac{\left|a_\ell(v_{h,\ell} ,w_{h,\ell}) \right|}{ \|w_{h,\ell}\|_{L^2(\partial \Omega_\ell)}}. \end{align}
           This is a norm for $h$ sufficiently small because  the sesquilinear  form $a_\ell$ satisfies a discrete inf-sup condition on $V^h(\Omega_\ell) \times V^h(\Omega_\ell)$  (with $h$-independent constant) \cite[Theorem 4.2]{MeSa:10}.
    The  norm of the error  vector $\mathbf{e}_{h}^n$ is then given by 
  \begin{align}
\|\mathbf{e}_{h}^n\|_{\mathbb{V}_0^h}& = \left(\sum_\ell\|e^n_{h,\ell}\|_{V_{\ell,0}^h}^2 \right)^{1/2}, \quad \text{where} \quad \mathbb{V}_0^h := \prod_\ell V_{\ell,0}^h  . \label{eq:pseudo-energy2}
\end{align}

\subsection{The impedance-to-impedance maps} 
\label{subsec:Impedance} 

We now describe the  computation of the canonical impedance-to-impedance  maps
$\rI_{s,t} $, defined on the canonical domain $\wOmega$ in Figure \ref{fig:hat}, for any $s,t \in \{ -,+\}$. {We emphasise that} this computation is used only to verify the theory of this paper, and is not needed in the implementation of the domain decomposition solver.

To construct  finite-element approximations of these  maps,  
we first derive  a  variational {problem} satisfied by them.   
To do this, we introduce the space  $V(\wOmega)$,  defined as the completion of $C^\infty(\overline{\wOmega})$ in the norm $\|\cdot\|_{V(\wOmega)}:=
 \left(\N{v}_{L^2(\wOmega)}^2 + \N{v}^2_{L^2(\partial \wOmega)}\right)^{1/2}. $
 Then we define the   sesquilinear form
    \beq\label{eq:alpha}
      \alpha(u,v) :=  -(\Delta u + k^2 u , v)_{\wOmega}  \ + \ \langle \partial u/\partial n - \ri k u, v\rangle_{\partial \wOmega}  \quad \text{for all} \ u \in U({\wOmega}), \ v \in V(\wOmega).  
      \eeq
      This  {form}   arises when  considering problem \eqref{eq:Helm}, \eqref{eq:BC} in strong (classical) form.  When $v {\in H^1(\wOmega)}$,    \eqref{eq:alpha} simplifies,  via  Green's first identity \cite[Lemma 4.3]{Mc:00},
      to
    \begin{align} \label{eq:equiv} \alpha(u,v) = a(u,v) \quad \text{for all}\   u \in U(\wOmega), \ v \in H^1(\wOmega),
    \end{align}
where $a$ denotes the sesquilinear form \eqref{eq:sesq1} defined on $\wOmega$.
    With $t\in \{+,- \}$ and   $v_t \in H^1(\Omega_t)$, let $\cR_t^\top v_t \in V(\wOmega)$ denote
    the function  
    that coincides with $v_t$ on $\Omega_t$ and is  zero elsewhere on $\wOmega$. Another  application of
    Green's first identity yields  the following 
    result.
            \begin{proposition}[Variational formulation of impedance-to-impedance map] \label{prop:alphaa} For  $s,t \in \{-,+\}$,  let $g \in L^2(\Gamma^s)$, and let  $u_s \in U_0(\wOmega)$ be the Helmholtz-harmonic function  with impedance data $g$ on $\Gamma^s$ and zero elsewhere. Then  
    \begin{equation}\label{eq:imp-map-eq}
      \langle \rI_{s,t} g, v_{t} \rangle_{\Gamma_\delta} = a_t (u_s,v_t) - \alpha(u_s,\cR_t^\top v_{t}), \quad \text{for all}    \ v_t \in H^1(\Omega_t),
    \end{equation}
where $$a_t(v,w) = \int_{\Omega_t} (\nabla v \cdot \nabla \overline{w} - k^2 v \overline{w}) -  \ri k \int_{\partial \Omega_t} v \overline{w}. $$ 
   \end{proposition}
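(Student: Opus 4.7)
The plan is to prove \eqref{eq:imp-map-eq} by a direct application of Green's first identity on $\Omega_t$ to $u_s$ and the test function $v_t$, and then match the two resulting boundary contributions against the two terms on the right-hand side of \eqref{eq:imp-map-eq}.

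First I would expand $a_t(u_s,v_t)$. Since $u_s\in U_0(\wOmega)$ is Helmholtz-harmonic, it lies in $U(\Omega_t)$ by Corollary \ref{cor:restrict} and satisfies $(\Delta+k^2)u_s=0$ on $\Omega_t$. Green's first identity \cite[Lemma 4.3]{Mc:00} applied on the Lipschitz subdomain $\Omega_t$, followed by cancellation of the volume term, yields
\begin{equation*}
a_t(u_s,v_t) \ = \ \langle (\partial/\partial n_t - \ri k)u_s,\,v_t\rangle_{\partial\Omega_t},
\end{equation*}
where $n_t$ is the outward normal to $\partial \Omega_t$. I would then split the boundary $\partial\Omega_t$ into the interior interface $\Gamma_\delta$ and the exterior part $\partial\Omega_t\cap\partial\wOmega$. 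By Definition \ref{def:imp_unit}, and noting that the outward normal $n_t$ on $\Gamma_\delta$ points in the $+x$ direction when $t=+$ and the $-x$ direction when $t=-$, the contribution from $\Gamma_\delta$ is exactly $\langle \rI_{s,t}g,\,v_t\rangle_{\Gamma_\delta}$. Hence the identity to be proved reduces to showing that $\alpha(u_s,\cR_t^\top v_t)$ matches the remaining exterior-boundary contribution.

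To identify this contribution, I would use once more that $u_s$ is Helmholtz-harmonic on the whole of $\wOmega$ to annihilate the volume term in \eqref{eq:alpha}, leaving
\begin{equation*}
\alpha(u_s,\cR_t^\top v_t)\ =\ \langle (\partial/\partial n - \ri k)u_s,\,\cR_t^\top v_t\rangle_{\partial\wOmega}.
\end{equation*}
Because $\cR_t^\top v_t$ is the extension of $v_t$ by zero outside $\Omega_t$, its trace on $\partial\wOmega$ agrees with $v_t$ on $\partial\Omega_t\cap\partial\wOmega$ and vanishes on $\partial\wOmega\setminus\partial\Omega_t$; on $\partial\Omega_t\cap\partial\wOmega$ the outward normals $n$ (of $\wOmega$) and $n_t$ (of $\Omega_t$) coincide. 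Substituting, one reads off exactly the desired exterior-boundary contribution, and subtracting it from $a_t(u_s,v_t)$ delivers \eqref{eq:imp-map-eq}.

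The main subtlety is that $\cR_t^\top v_t$ is in general only an element of $V(\wOmega)$ and not of $H^1(\wOmega)$: its trace jumps discontinuously at the two points where $\Gamma_\delta$ meets $\partial\wOmega$. This is why $\alpha$ cannot simply be replaced by $a$ via \eqref{eq:equiv}, and one must keep $\alpha$ in its strong (non-integrated-by-parts) form. Nevertheless, the pairing $\langle(\partial/\partial n-\ri k)u_s,\cR_t^\top v_t\rangle_{\partial\wOmega}$ makes sense as an $L^2(\partial\wOmega)$ inner product: Theorem \ref{thm:H32} together with Lemma \ref{lem:Friday}(i) ensures $(\partial/\partial n-\ri k)u_s\in L^2(\partial\wOmega)$, while the trace of $\cR_t^\top v_t$ on $\partial\wOmega$ is in $L^2(\partial\wOmega)$ by the standard trace theorem on $\Omega_t$. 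This justifies the boundary computations above and completes the argument.
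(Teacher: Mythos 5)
Your argument is correct and fills in exactly the computation the paper only sketches (``Another application of Green's first identity yields the following result''): you apply Green's identity on $\Omega_t$ to $a_t(u_s,v_t)$, use Helmholtz-harmonicity to kill both volume terms, and match the contribution on $\partial\Omega_t\cap\partial\wOmega$ against $\alpha(u_s,\cR_t^\top v_t)$, leaving the $\Gamma_\delta$ term. Your remark that $\cR_t^\top v_t\in V(\wOmega)\setminus H^1(\wOmega)$ (so $\alpha$ must be kept in strong form) is precisely the subtlety the paper's setup is designed to handle, and the regularity justifications are sound, so this is the same approach as the paper.
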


   Motivated by  \eqref{eq:equiv} and  \eqref{eq:imp-map-eq},    we  define a finite-element approximation $\rI_{h,s,t}: L^2(\Gamma^s) \rightarrow V^h(\Gamma_\delta)$ to the map $\rI_{s,t}$ as follows.  Analogously to \eqref{nodewise},  for any $v_h \in V^h(\Gamma_\delta)$, we define its node-wise zero extension to all of $V^h({\wOmega})$ by
$$
\cR_{\Gamma_\delta,h}^\top v_{h} (x)= \left\{
\begin{aligned}
v_{h} (x)\quad &\text{for all} \quad  x \in {N}(\overline{\Gamma_\delta}), \\
0\quad &\text{for all}\quad   x \in {N}(\overline{\wOmega}\backslash \overline{\Gamma_\delta}). 
\end{aligned}\right.
$$
Note that  $\cR_{\Gamma_\delta,h}^\top v_{h}\in V^h \subset H^1(\wOmega)$  but is supported  only on the union of all elements of the mesh $T^h$
that touch $\Gamma_\delta$.
Using this,    we   define $\rI_{h,s,t} $ by the variational {problem}  
\begin{equation}\label{eq:discrete-imp-map}
\langle \rI_{h,s,t}  g, v_{h} \rangle_{\Gamma_\delta} = a_t (u_{h,s}, \cR_{\Gamma_\delta,h}^\top v_{h}) - a(u_{h,s},\cR_{\Gamma_\delta,h}^\top v_{h}) \quad \text{for all} \quad  v_{h}\in \mathrm{V}^h(\Gamma_\delta),
\end{equation}
where $u_{h,s} \in V^h(\wOmega)$ is  the standard finite-element approximation of the function $u_s$ (from  Proposition   \ref{prop:alphaa}),  obtained by  solving the homogeneous Helmholtz problem on  $\wOmega$ with impedance data $g$ on $\Gamma^s$ and zero elsewhere.

Note that several approximations have been made here. First, in {going from} \eqref{eq:imp-map-eq} {to \eqref{eq:discrete-imp-map}}, the test function $v_t \in H^1(\Omega_t) $ has been replaced by
$ v_{h} \in V^h(\Gamma_\delta)$ on the left-hand side and $\cR_{\Gamma_\delta,h} ^\top v_h$ on the right-hand side. Moreover the formula \eqref{eq:equiv}, which requires $u \in U(\wOmega)$, has been formally applied here with $u$ replaced by $u_{s,h} \in V^h \not \subset U(\wOmega)$. Despite these `non-conforming' approximations, it  can be
shown (with details in \cite{GoGrSp:21}) 
that, with $\Vert \cdot \Vert$ denoting the operator norm from $L^2(\Gamma^s) \rightarrow L^2(\Gamma_\delta)$, the following convergence result holds. 
\begin{corollary}[Convergence of discrete maps as $h \rightarrow 0$]\label{cor:main} 
  $$ \Vert \rI_{s,t} - \rI_{h,s,t} \Vert  \rightarrow 0 \ \quad \text{as} \ h \rightarrow 0. $$
\end{corollary}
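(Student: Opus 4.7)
The plan is to exploit the variational formulations \eqref{eq:imp-map-eq} and \eqref{eq:discrete-imp-map} defining $\rI_{s,t}$ and $\rI_{h,s,t}$, respectively. Fix $g \in L^2(\Gamma^s)$ with $\|g\|_{L^2(\Gamma^s)} \leq 1$, let $u_s$ be the exact Helmholtz-harmonic solution (bounded in $U(\wOmega)$ uniformly in $g$ by Theorem \ref{cor:27} and Lemma \ref{lem:IIPU}, since the rectangular canonical domain is Lipschitz star-shaped), and $u_{h,s}$ its finite-element approximation; set $e_h := u_s - u_{h,s}$. For each $v_h \in V^h(\Gamma_\delta)$, write $\tilde{v}_h := \cR_{\Gamma_\delta,h}^\top v_h \in V^h \subset H^1(\wOmega)$ and test \eqref{eq:imp-map-eq} with $v_t := \tilde{v}_h|_{\Omega_t}$, which lies in $H^1(\Omega_t)$ and has trace $v_h$ on $\Gamma_\delta$.

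First I would establish pointwise convergence. Subtracting \eqref{eq:discrete-imp-map} from the resulting continuous identity and using Green's first identity on $\Omega_t$ and $\Omega_{\bar t}$ — together with the fact that $\cR_t^\top v_t$ (the jump extension used in the continuous formulation) differs from $\tilde{v}_h \in H^1(\wOmega)$ only by the contribution of the neighbouring subdomain $\Omega_{\bar t}$ — one expresses
\begin{equation*}
\langle (\rI_{s,t} - \rI_{h,s,t}) g, v_h \rangle_{\Gamma_\delta}
\end{equation*}
as an $H^1$-type pairing between $e_h$ and $\tilde{v}_h$ localised to the narrow strip $\omega_h$ of elements touching $\Gamma_\delta$, together with lower-order boundary terms supported on $\Gamma_\delta$ itself. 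Applying the Helmholtz finite-element theory via a Schatz duality argument (justified because Lemma \ref{lem:IIPU} on $\wOmega$ has $k$-independent constants) gives $\|e_h\|_{H^1(\wOmega)} \to 0$ as $h \to 0$, using $u_s \in H^{3/2}(\wOmega)$ from Theorem \ref{cor:27}. Combining this with $\|(I - P_h)\rI_{s,t} g\|_{L^2(\Gamma_\delta)} \to 0$ (where $P_h$ is the $L^2(\Gamma_\delta)$ projection onto $V^h(\Gamma_\delta)$), one deduces pointwise convergence $\|\rI_{h,s,t} g - \rI_{s,t} g\|_{L^2(\Gamma_\delta)} \to 0$ for each fixed $g$.

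To promote pointwise convergence to operator-norm convergence, I would invoke compactness of $\rI_{s,t}$: by Theorem \ref{cor:27} and Lemma \ref{lem:Friday} the map $g \mapsto (\partial_n - \ri k) u_s$ on $\Gamma_\delta$ factors through $H^{3/2}(\wOmega;\Delta)$, and interior elliptic regularity ensures that away from the corners $\partial\Gamma_\delta \subset \partial\wOmega$ the trace is smoother than $L^2$, so the range sits in a space compactly embedded in $L^2(\Gamma_\delta)$. Combined with uniform boundedness of the family $\{\rI_{h,s,t}\}_h$ — a discrete analogue of Lemma \ref{lem:Ibounded} resting on a discrete inf-sup estimate for the Helmholtz problem on $\wOmega$ for $h$ sufficiently small — the classical argument (compact $+$ pointwise $\Rightarrow$ uniform on the unit ball) delivers the operator-norm convergence stated in Corollary \ref{cor:main}.

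The main obstacle is controlling the \emph{variational crime} introduced by the mismatch between the jump extension $\cR_t^\top v_t \in V(\wOmega) \setminus H^1(\wOmega)$ used in \eqref{eq:imp-map-eq} and the conforming extension $\tilde{v}_h \in V^h \subset H^1(\wOmega)$ used in \eqref{eq:discrete-imp-map}: the resulting jump terms across $\Gamma_\delta$ must be handled by a weighted finite-element error analysis localised to the one-element-thick strip $\omega_h$, which is the ``novel weighted FE error analysis'' advertised in the introduction. A secondary but essential difficulty is justifying the uniform bound on $\{\rI_{h,s,t}\}_h$, since Lemma \ref{lem:Ibounded} rests on the well-posedness estimate of Lemma \ref{lem:IIPU}, whose discrete counterpart requires $h$ sufficiently small relative to $k$; the most delicate regularity questions concern the behaviour of $\rI_{s,t} g$ near the two endpoints of $\Gamma_\delta$ lying on $\pwOmega$, where corner singularities limit the compactness-generating regularity.
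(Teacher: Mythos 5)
The paper does not actually prove Corollary \ref{cor:main} here; it states the result and defers the argument, in its entirety, to the companion paper \cite{GoGrSp:21}, remarking only that the proof requires control of the non-conforming approximations made in passing from \eqref{eq:imp-map-eq} to \eqref{eq:discrete-imp-map} and that this is done via a ``novel weighted finite-element error analysis.'' So there is no proof in this paper to compare against; I can only assess your sketch on its own merits and against that declared strategy.

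Your identification of the two variational crimes --- replacing $\cR_t^\top v_t \in V(\wOmega)\setminus H^1(\wOmega)$ by the conforming $\cR_{\Gamma_\delta,h}^\top v_h$, and applying \eqref{eq:equiv} with $u_{h,s}\notin U(\wOmega)$ --- matches exactly what the paper flags as the difficulty, and the suggestion that the resulting commutator terms are localised to the strip of elements touching $\Gamma_\delta$ and must be handled by a localised, weighted argument is consistent with the paper's description of the proof in \cite{GoGrSp:21}. Your observations about corner regularity near $\partial\Gamma_\delta\cap\pwOmega$ and about the need for a discrete inf-sup bound (hence $h$ small enough relative to $k$) are also apposite.

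However, the final step of your argument contains a genuine logical gap. You invoke what you call ``the classical argument (compact $+$ pointwise $\Rightarrow$ uniform on the unit ball),'' i.e.\ that if $\rI_{s,t}$ is compact, $\rI_{h,s,t}\to\rI_{s,t}$ pointwise, and $\sup_h\N{\rI_{h,s,t}}<\infty$, then $\N{\rI_{h,s,t}-\rI_{s,t}}\to 0$. This is false. In $\ell^2$, take $T=0$ (compact) and $T_h x := \langle x, e_{[1/h]}\rangle\, e_1$; then $T_h\to 0$ strongly, $\N{T_h}=1$ uniformly, yet $\N{T_h - T}=1$ for all $h$. Compactness of the \emph{limit} is not the right hypothesis. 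What \emph{is} true, and is what the Arzel\`a--Ascoli/total-boundedness argument actually yields, is the statement: if $A_h\to 0$ pointwise with $\sup_h\N{A_h}<\infty$ and $K$ is compact, then $\N{A_h K}\to 0$. To use this you would have to exhibit the error $\rI_{h,s,t}-\rI_{s,t}$ in the factored form $A_h\circ K$ with $K$ compact and \emph{independent of $h$} acting on the data $g$, and with $A_h\to 0$ pointwise; alternatively you could invoke Anselone's theory, which requires the family $\{\rI_{h,s,t}\}_h$ to be \emph{collectively} compact (i.e.\ $\overline{\bigcup_h\rI_{h,s,t}(B)}$ compact), a hypothesis you have not established and which does not follow from uniform boundedness plus compactness of the limit. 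Neither factorisation nor collective compactness is straightforward here, because the discrete operator goes through a discrete solve, not a post-processing of $u_s$. A cleaner and more robust route --- and the one indicated by the paper's phrase ``weighted finite-element error analysis'' --- is a direct quantitative estimate of the form $\N{(\rI_{h,s,t}-\rI_{s,t})g}_{L^2(\Gamma_\delta)}\le C(h,k)\N{g}_{L^2(\Gamma^s)}$ with $C(h,k)\to 0$ as $h\to 0$, obtained by combining a Schatz-type bound on $u_s-u_{h,s}$ (using the $k$-independent stability in Lemma \ref{lem:IIPU} and the regularity of Theorem \ref{cor:27}) with a local, weight-sharpened estimate in the one-element neighbourhood of $\Gamma_\delta$. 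With such an estimate no compactness is needed at all, and the subtle endpoint-regularity issues you correctly flag are absorbed into the local weighting rather than into a hard-to-quantify compactness argument.
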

\noindent Thus, 
the computations of $\Vert \rI_{h,s,t}\Vert$, given in \S \ref{sec:numerical}, 
are reliable approximations of    $\Vert \rI_{l,j}\Vert$.

A key point in the computation is the realisation that, for any $g \in L^2(\Gamma^s)$, $\rI_{h,s,t} g = \rI_{h,s,t} g_h$, with $g_h$ denoting the $L^2$-orthogonal projection of $g$ onto $V^h(\Gamma^s)$. (This is because the finite-element solution of the Helmholtz problem only `sees' the impedance data through  its $L^2$ moments against the finite-element basis functions.)
The operator  $\rI_{h,s,t}$ thus acts only on finite-dimensional spaces, and its norm can be computed by solving an appropriate matrix eigenvalue problem.       
In   \S \ref{sec:numerical} this is done using the code {\tt SLEPc}, within the finite-element package {\tt FreeFEM++}.

 \section{Numerical experiments }\label{sec:numerical}
 In this section, we verify the theoretical results 
in Theorems \ref{thm:TLU} and  \ref{th:EandD} and  Corollaries
  \ref{cor:composite}, \ref{coro:TLU}, \ref{coro:higherrho}
 using the finite-element approximations described  in \S \ref{sec:fem}. 
    We also perform some extra experiments that provide insight into  the performance of the iterative method in situations not covered by  the  theory.
 All  experiments are on rectangles,  the domain is discretized using a  uniform triangular mesh with  diameter  $h$, and we  use the Lagrange conforming element of degree 2. 
 {We use}  mesh diameter $h \sim k^{-5/4}$,  which is sufficient to ensure a bounded relative error as $k$ increases \cite[Corollary 5.2]{DuWu:15}.  The experiments are implemented using the package {\tt FreeFEM++} \cite{hecht2019freefem++}.
    
\subsection{Numerical illustration of   our theory} 

In this subsection we consider  the 2-d strip domain 
as in Notation \ref{def:geom}. The global domain $\Omega$ has height $H=1$ and length $L_\Omega$. 
  For the domain decomposition, we divide $\Omega$ into  $N$ equal non-overlapping rectangular domains and then extend
  each subdomain by adding to it neighbouring elements of distance $\leq r L_\Omega / N$ away, where $r>0$ is a parameter. 
  Thus the interior subdomains have length
$L=  (1+2r) L_\Omega / N$, while the end subdomains have length  $(1+r) L_\Omega / N$. The global overlap size is $\delta = 2r L_\Omega / N$. 
In the first two experiments we examine how  the convergence rate 
{depends on} the  parameters $\rho,\gamma$,   defined in \eqref{simplify1}, \eqref{simplify2} and \eqref{def:rho_gamma}.

\begin{experiment}\mythmname{{Computation of $\rho$ and $\gamma$ and} convergence {of the iterative method} as $\rho$ decreases} \label{Expt1}
      Corollaries \ref{coro:TLU} and \ref{coro:higherrho}  suggest that the convergence rate should 
      improve  as  $\rho$ decreases, and  Theorem \ref{th:EandD} 
      suggests that the large-$k$ limit of $\rho$ should decrease as $\delta$ increases.
      
Table \ref{tb:imp-left2right-aspect} {gives}  values of  $\rho$ and $\gamma$ as functions of  $k$, $\delta$ and   $L$
    as defined in \eqref{def:rho_gamma}. These are  computed  using  the method outlined in \S \ref{subsec:Impedance}. Here $r$ is chosen so that $\delta = L/3$. 
The top part of Table \ref {tb:imp-left2right-aspect} shows
that  $\rho$ decreases as $L$ increases,
as suggested by  Theorem \ref{th:EandD}. 

For fixed $k$, the observed decay rate of $\rho$ is slightly faster than  $\mathcal{O}(\delta^{-1})$.
The bottom part of this table  {shows the} corresponding values of $\gamma$.
Here  ${\gamma}\leq 1$, somewhat smaller than the upper bound  predicted by Lemma \ref{lm:imp}.   There is a very modest growth of the values of $\rho$ and $\gamma$ as $k$ increases, for each fixed $L$; given the lower bound in Theorem \ref{th:EandD}, we expect that the values of $\rho$ in Table \ref{tb:imp-left2right-aspect}  are in the preasymptotic regime for $k\tendi$.

  Figure \ref{fig:errorN3} 
  {shows} the corresponding convergence of the iterative method 
  {for} $N = 3$ subdomains and $\delta = L/3$  on a sequence of domains of increasing global length
$\mathrm{L_\Omega} =  4,8,16 $ (blue, black and red lines respectively); here the 
length of each subdomain, $L$, is also doubling for each experiment.

To   obtain the relative error in the iterative method, we solve the problem \eqref{eq:weakfeh} with  right-hand side $F=0$,
so that the   finite-element solution is $u_h = 0$ and the relative error is simply
\begin{align}
  \label{eq:relerr} \frac{\Vert  \mathbf{e}_h^n\Vert_{\mathbb{V}^h_0} }{\Vert   \mathbf{e}_h^1\Vert_{\mathbb{V}^h_0}}  = \frac{ \Vert   \mathbf{u}_h^n\Vert_{\mathbb{V}^h_0 } }{ \Vert   \mathbf{u}_h^1\Vert_{\mathbb{V}^h_0 }},   \end{align}
{where $\|\cdot\|_{\mathbb{V}^h_0}$ is} defined in  \eqref{eq:pseudo-energy2}.  The nodal values of the starting guess  $u_h^0\in V^h$ were chosen to be  uniformly  distributed in  the unit disc in the complex plane. The relative error \eqref{eq:relerr} was computed with respect to the first iterate $ \mathbf{u}_h^1 \in \mathbb{V}_0^h$, because  the initial guess $u_h^0$ is not in this space. 

\begin{table}[H]
\setlength\extrarowheight{2pt} 
\centering
\scalebox{0.9}{
\begin{tabularx}{\textwidth}{CC|CCCCC}
\hline
\hline
&${k} \backslash {L}$ & $1$  &  $2$	& $4$  &  $8$	&$16$  		\\ 
\hline 
\multirow{4}{*}{${\rho}({k}, \frac{1}{3} {L},  {L})$} &$10$	&0.169	&0.0863	&0.0385	&0.0153	&0.00952		\\
&$20$&0.190	&0.0997	&0.0382	&0.0175	&0.00909		\\
&$40$&0.234	&0.116	&0.0434	&0.0205	&0.00884		\\
&$80$&0.284	&0.148	&0.0557	&0.0231	&0.0115	\\
\hline
	\hline
\end{tabularx}}

\vspace{0.2cm}

\scalebox{0.9}{
\begin{tabularx}{\textwidth}{CC|CCCCC}
\hline
\hline
&${k} \backslash {L}$ & $1$  &  $2$	& $4$  &  $8$	&$16$  		\\ 
\hline 
\multirow{4}{*}{${\gamma}({k}, \frac{2}{3} {L},  {L})$} &$10$	&0.958	&0.834	&0.641	&0.382	&0.135		\\
&$20$&0.999	&0.982	&0.896	&0.786	&0.603		\\
&$40$&0.999	&0.999	&0.990	&0.943	&0.883		\\
&$80$&1.000	&1.000	&0.999	&0.995	&0.970	\\
\hline
	\hline
\end{tabularx}}
\caption{Numerical computation of ${\rho}({k},L/{3}, L )$ and ${\gamma}({k},2L/{3}, L )$ for increasing  ${L}$, $h=80^{-{5}/{4}}$}
\label{tb:imp-left2right-aspect}
\end{table}
\begin{figure}[H]
    \centering
    \begin{subfigure}[t]{0.33\textwidth}
        \centering
        \includegraphics[width=.95\textwidth]{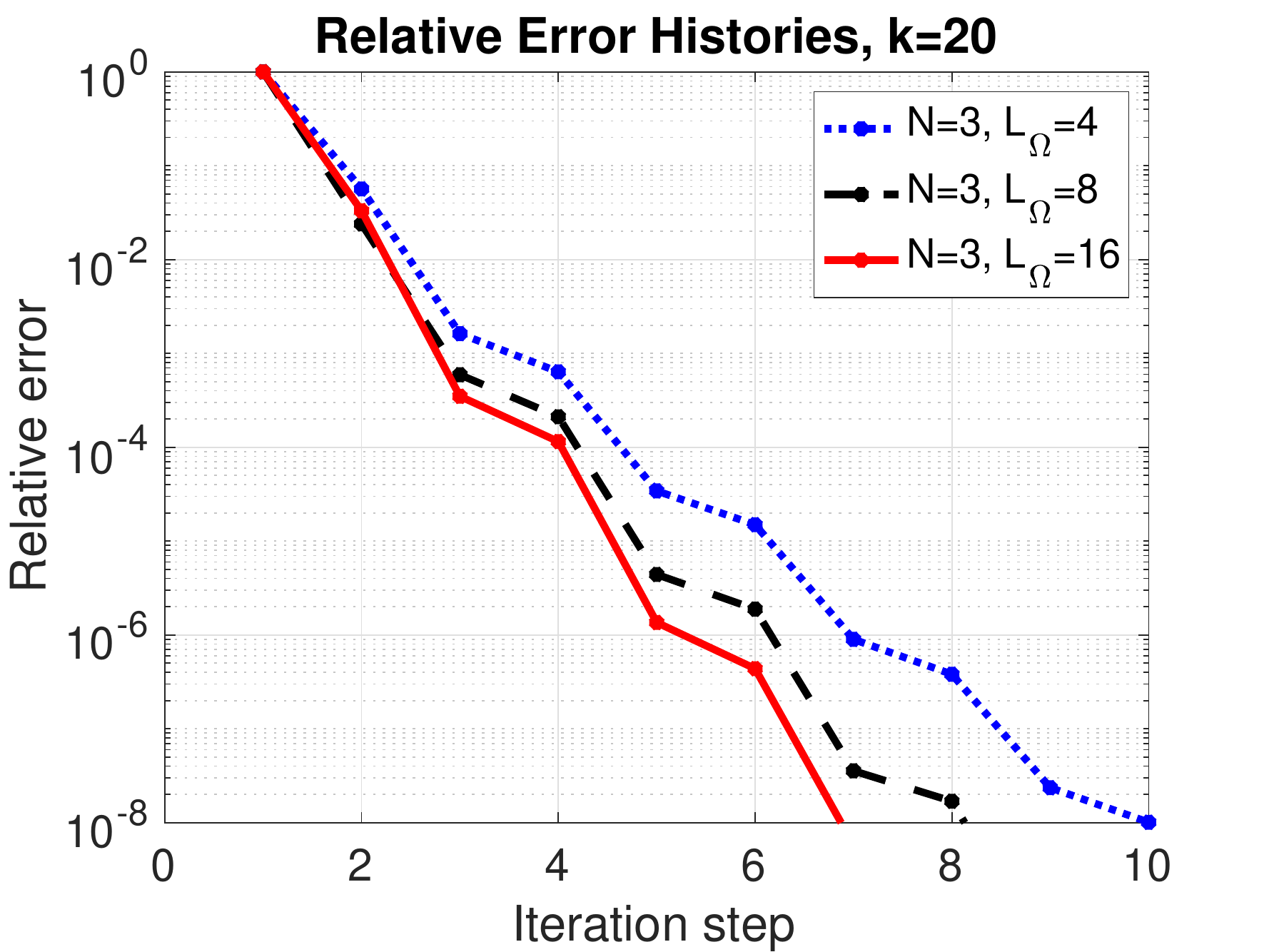}
        \caption{$k=20$\label{fig:errork20N3}}
         \end{subfigure}%
     \hfill
    \begin{subfigure}[t]{0.33\textwidth}
        \centering
        \includegraphics[width=.95\textwidth]{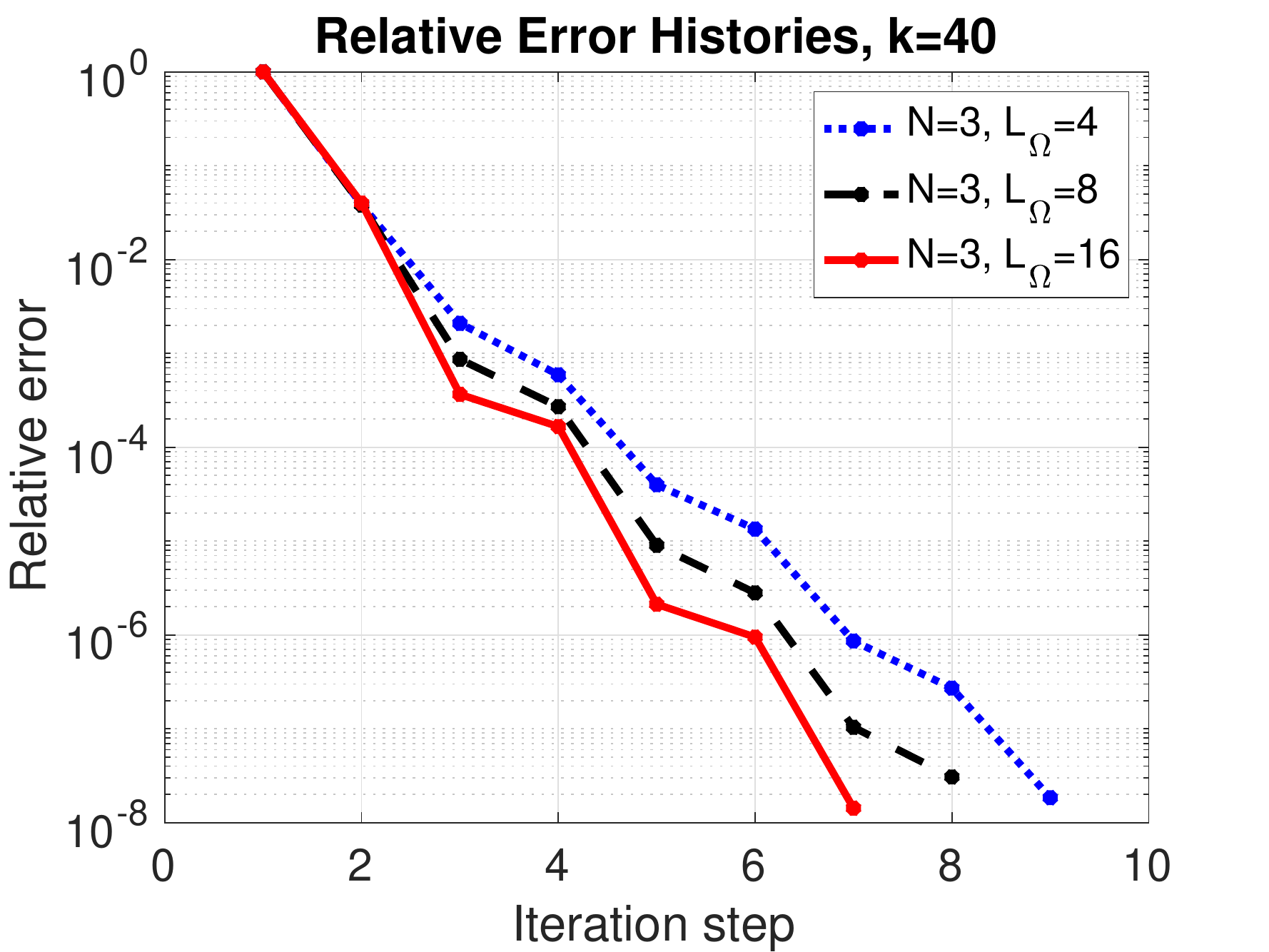}
        \caption{$k=40$\label{fig:errork40N3}}
        \end{subfigure}%
           \begin{subfigure}[t]{0.33\textwidth}
        \centering
        \includegraphics[width=.95\textwidth]{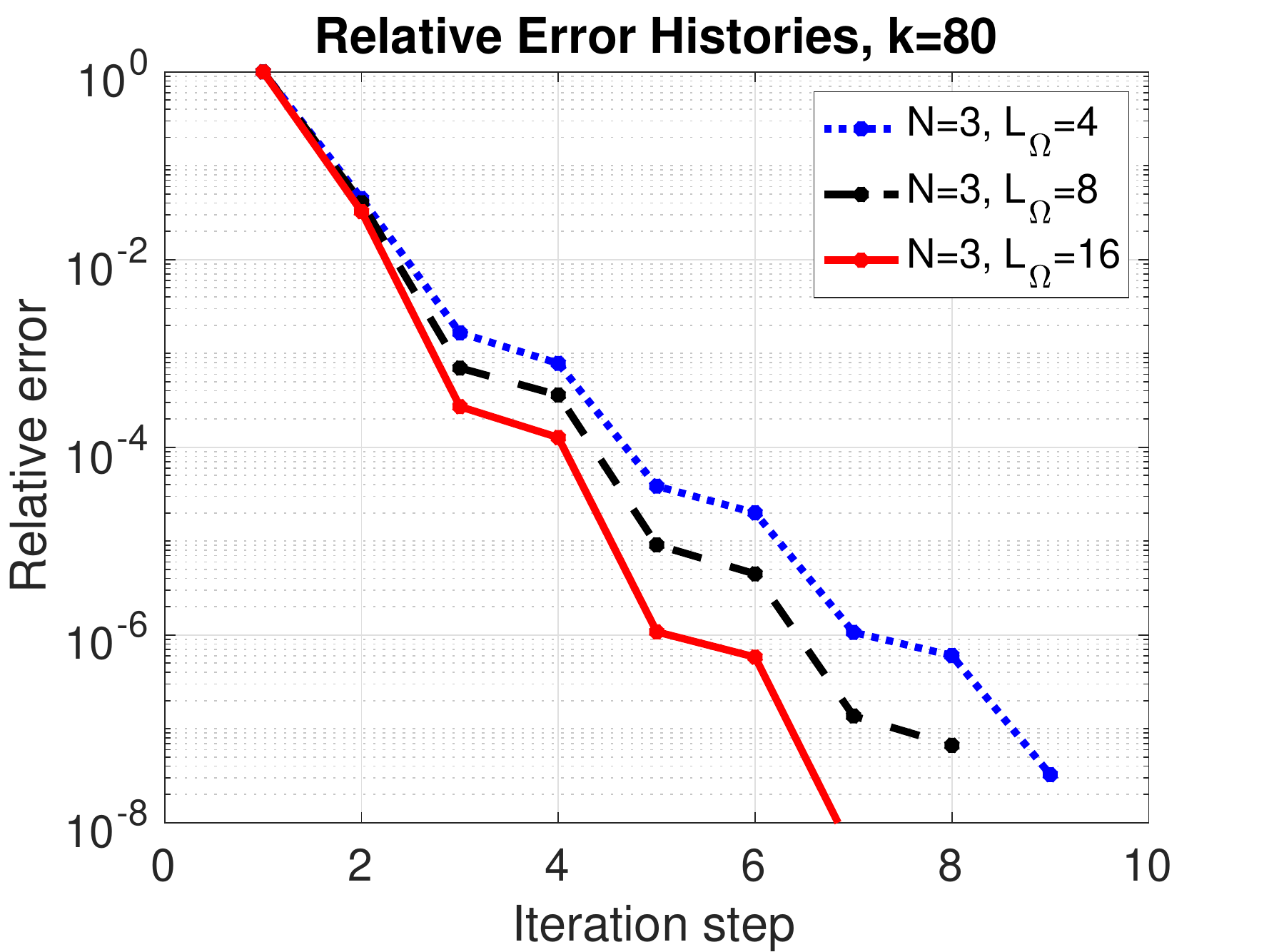}
        \caption{$k=80$ \label{fig:errork80N3}}
         \end{subfigure}%
           \caption{Relative error histories of the iterative method with 3 strip-type subdomains}\label{fig:errorN3}
\end{figure}

Figure \ref{fig:errorN3} shows that the convergence rate improves 
 as $L$ and hence $L_\Omega$ increases. This is 
 consistent with Corollary \ref{coro:TLU}, which shows that with $N$ fixed and $\gamma$ bounded, the iterative method is power contractive for small enough $\rho$. 
 The convergence rate is apparently unaffected by increasing  $k$, a bit better than  expected from the $k$-dependence of $\rho$ in Table \ref{tb:imp-left2right-aspect}. 
 \end{experiment}

The next experiment  investigates the effect of letting the number of subdomains $N$ grow. In this case,  Corollary \ref{coro:TLU} guarantees contractivity of  $\cT^N$
only for small  enough $N$.
However we  see that in fact the iterative method continues to work well as $N$ grows. The explanation for this is 
that, as discussed in \S\ref{sec:composite}, the composite impedance-to-impedance maps are better behaved than the individual ones; this is illustrated in
Experiment \ref{Expt3} {below}.

\begin{experiment}[Dependence on $N$]  \label{Expt2}
  We repeat the experiments in Figure \ref{fig:errorN3} but instead of $N=3$ (i.e, $3$ subdomains) we use  $N=4,8,16$. For each $N$, we  choose $\mathrm{L_\Omega}$  so  that the sizes of the subdomains and  overlaps do not depend on  $N$,  and thus   $\rho$ and $\gamma$ remain fixed as $N$ grows.
  The subdomain length is $L = 2$ and the overlap is $\delta = L/3$. 
In Figure \ref{fig:error} we plot the relative error histories for $ k= 20,40,80$.

  The relative error histories show  a  sudden reduction of the error after each batch of $N$ steps, and, after each such reduction,   the convergence rate appears to be 
  higher than before.
  This can be partially explained by  Corollary \ref{coro:higherrho}; {indeed,}   
  as the number of iterations  $n$ passes through $sN$ for $s = 2,3, \ldots$, {the}  order of the
  estimate for the norm of $\cT^{n}$ increases from $\mathcal{O}(\rho^{s-1})$ to $\mathcal{O}(\rho^s)$. However this  explanation can not be completely rigorous because the coefficient of the powers of $\rho$ in Corollary \ref{coro:higherrho} also grows with $N$. To understand the behaviour of the iterative method better we need to consider composite maps,
  which is the purpose of  Experiment \ref{Expt3}.

\begin{figure}[H]
    \centering
    \begin{subfigure}[t]{0.33\textwidth}
        \centering
        \includegraphics[width=.95\textwidth]{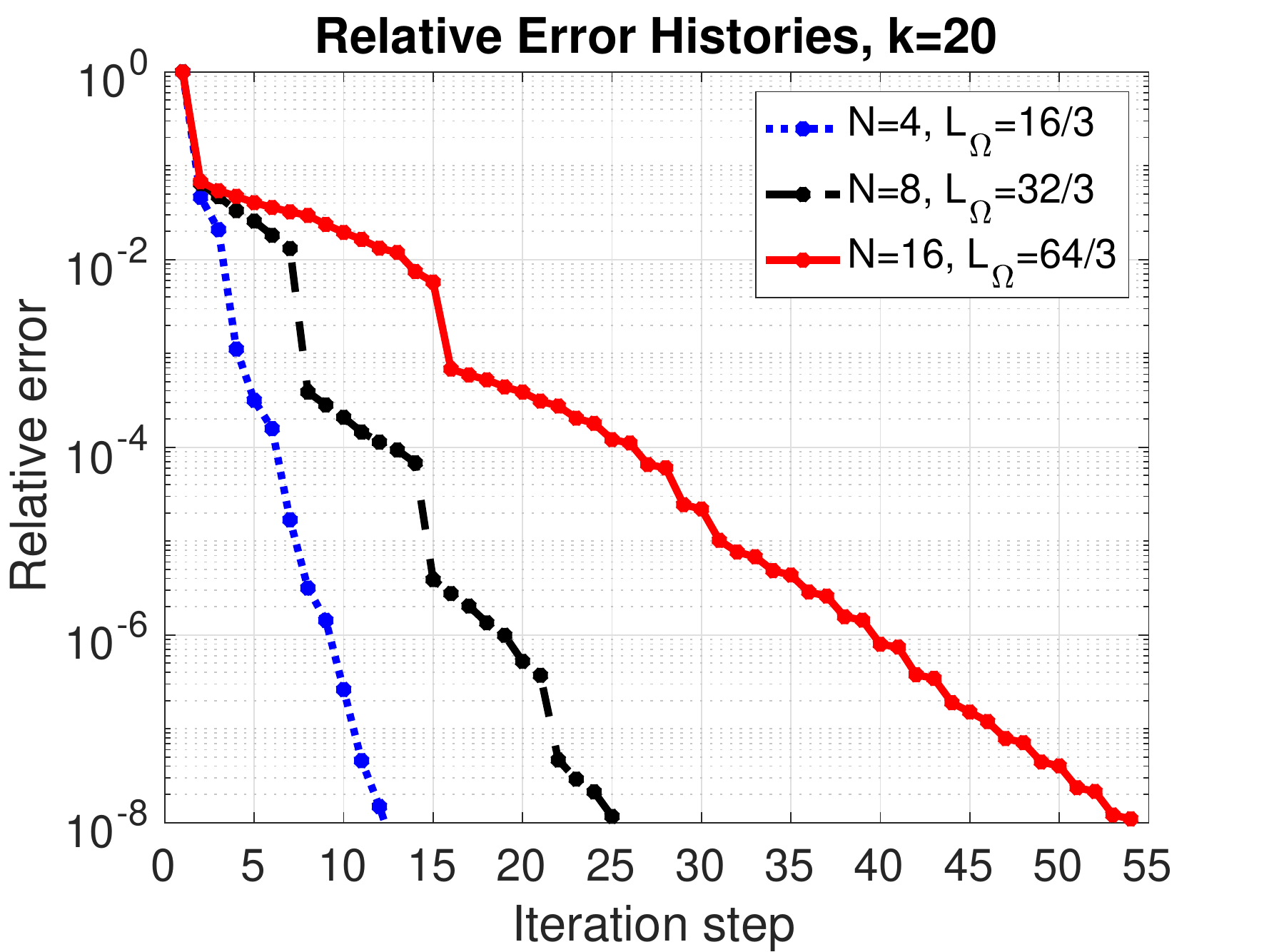}
        \caption{k=20\label{fig:errork20}}
         \end{subfigure}%
     \hfill
    \begin{subfigure}[t]{0.33\textwidth}
        \centering
        \includegraphics[width=.95\textwidth]{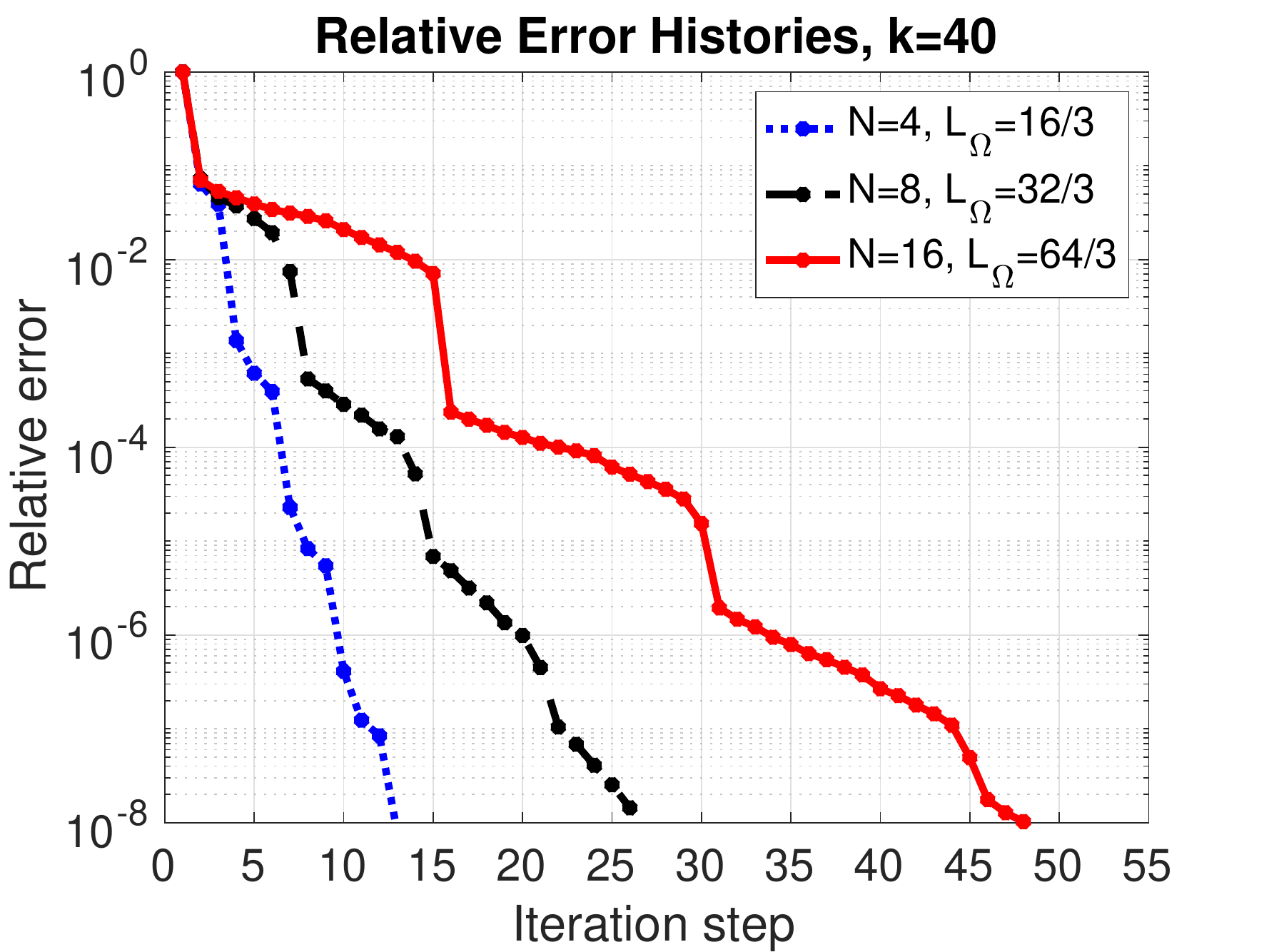}
        \caption{k=40 \label{fig:errork40}}
        \end{subfigure}%
           \begin{subfigure}[t]{0.33\textwidth}
        \centering
        \includegraphics[width=.95\textwidth]{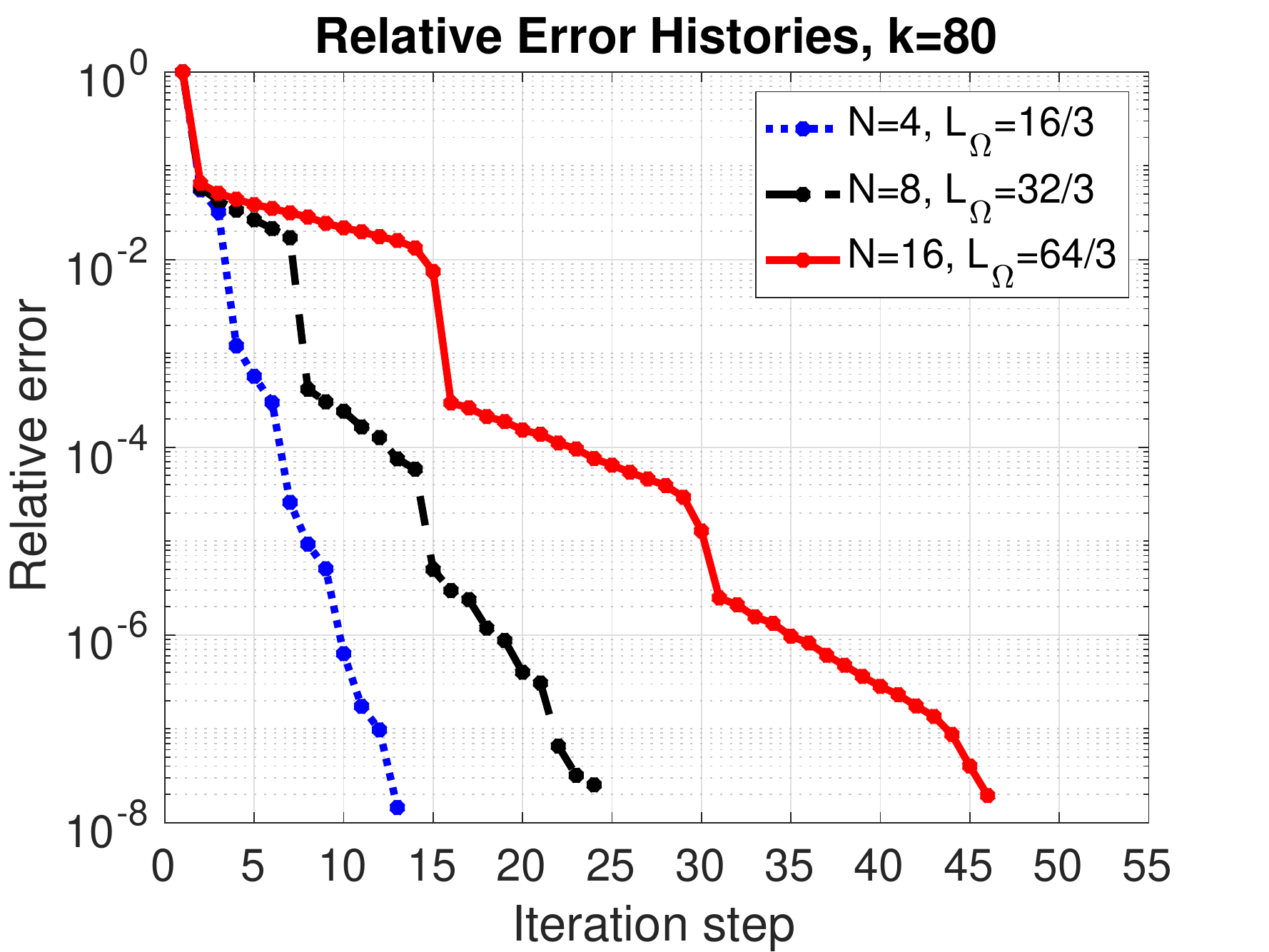}
        \caption{k=80 \label{fig:errork80}}
         \end{subfigure}%
           \caption{Relative error histories of the iterative method with many strip-type subdomains }\label{fig:error}
\end{figure}

Before that, Table \ref{tab:new1} {gives} the average number of iterations needed to reach a relative error of $10^{-6}$ for each of the scenarios  depicted in Figure   \ref{fig:error}, computed over 50 random starting guesses.  This table clearly indicates that the number of  iterations needed to obtain a fixed error tolerance is roughly  ${\mathcal{O}}(N)$ as $N$ grows.    We also observe modest improvement in the iteration numbers as $k$ increases; similar results were  seen in \cite[Table 3]{GrSpZo:20}.

\begin{table}[H]
\begin{center}
\scalebox{0.8}{
  \begin{tabularx}{\textwidth}{C|CCC}
\hline
\hline
$k \backslash N$ & $4$    & $8$	& $16$		\\ 
\hline 
$20$ & 6.00      & 12.34      & 25.12   		\\
$40$ & 5.58    & 10.16 &   16.96 		\\
$80$ &    4.44   & 8.00 & 15.88	\\

\hline
	\hline
      \end{tabularx}
    }
    \end{center} 
\caption{Average number of iterations to reach a relative error of $10^{-6}$ in Figure  \ref{fig:error}}
\label{tab:new1}
\end{table}

\end{experiment}

\begin{experiment}[Robustness to $N$ explained via composite  maps]  
\label{Expt3}  

As discussed in \S\ref{sec:composite}, the dominant term in \eqref{eq:composite} with $n=N$ is the $j=1$ term \eqref{eq:first_order}
  The goal of this experiment is to show that the behaviour of  \eqref{eq:first_order} is better than that predicted by estimating its norm by the product of the norms of its components (as in  \eqref{eq:compositeLU}).
Following  \eqref{eq:induction}, 
for $ N = 4, 8, 16$, $L=2$ and $\delta =L/3$, 
we  compute 
\begin{align}\label{eq:composit-imp-norm}
{\zeta}_N  & : =  2(N-1) \left \Vert \left(\prod_{j=2}^{N-1} \Imaps{j}{-}{j+1}{-}\right) \Imaps{1}{+}{2}{-}\right \Vert_{L^2(\Gamma_1^+) \ra L^2(\Gamma_{N}^-) } ,
\end{align}
{and use this as a proxy for \eqref{eq:first_order}, with this replacement justified by \eqref{eq:induction}, and the fact that $\bcL^{N-1}\bcU$ is a representative element of $\{ p(\bcL, \bcU) : p \in \cP(N,1)\}$.}
The results in Table \ref{tb:composite-imp1} show that $\zeta_N$ remains small and bounded
as $N$ increases. Although we have here computed only one  term in \eqref{eq:first_order},  
this gives some explanation why the convergence {rate} of the iterative method remains stable as $N$ increases, as observed 
in Figure  \ref{fig:error} and Table \ref{tab:new1}.

\begin{table}[H]
\setlength\extrarowheight{2pt} 
\centering
\scalebox{0.9}{
\begin{tabularx}{\textwidth}{C|CCCC}
\hline
\hline
 $   {k}$    &10   &  20	&40&80	\\ 
\hline
${\zeta}_2 = 2 \rho$& 		$1.74$e$-1$& 	$1.95$e$-1$&	$2.32$e$-1$&	$2.98$e$-1$	\\
$ {\zeta}_4$&	$4.06$e$-2$&	$9.28$e$-2$&	$1.20$e$-1$&  $1.33$e$-1$	\\
$ {\zeta}_8$&	$3.32$e$-2$&	$8.52$e$-2$&	$1.28$e$-1$&  $1.14$e$-1$		\\
$ {\zeta}_{16}$&$8.86$e$-3$&	$1.08$e$-1$&	$1.12$e$-1$&	$1.35$e$-1$	\\
\hline
	\hline
\end{tabularx}}
\caption{Norm of the composite impedance-to-impedance map \eqref{eq:composit-imp-norm}, $\delta=L/3$ 
}\label{tb:composite-imp1}
\end{table}
\end{experiment}

For the most efficient parallel implementations, the overlap $\delta$ should be as small as possible.
In our final experiment for the strip domain we {therefore} study the dependence of the convergence of the iterative method on the overlap parameter.

\begin{experiment}[Dependence on  overlap]
  \label{Expt4} 

  In this experiment we fix $ k = 40$ 
   and repeat Experiment \ref{Expt2}, with $N = 4,8, 16$,  comparing  the previous overlap choice {$\delta = L/3$  with   $\delta = L/6$ and $ 2h$.} 
   Here, the length of the global domain $L_\Omega= N(L-\delta)$ is chosen so that $L=2$, i.e., the subdomains have length $2$. 
   In Figure \ref{fig:error-overlap} we plot the relative error histories.
  These {histories} indicate that for small $N$ there is quite a big difference in performance between $\delta = 2h$ and the other two choices of $\delta$. However, as  $N$ increases the difference between the three choices of overlap becomes less pronounced.  With $N=16$ we again see clearly the `staircase' form of the error decay, as in Experiment \ref{Expt2}.
  
  \begin{figure}[H]
    \centering
    \begin{subfigure}[t]{0.33\textwidth}
        \centering
        \includegraphics[width=.95\textwidth]{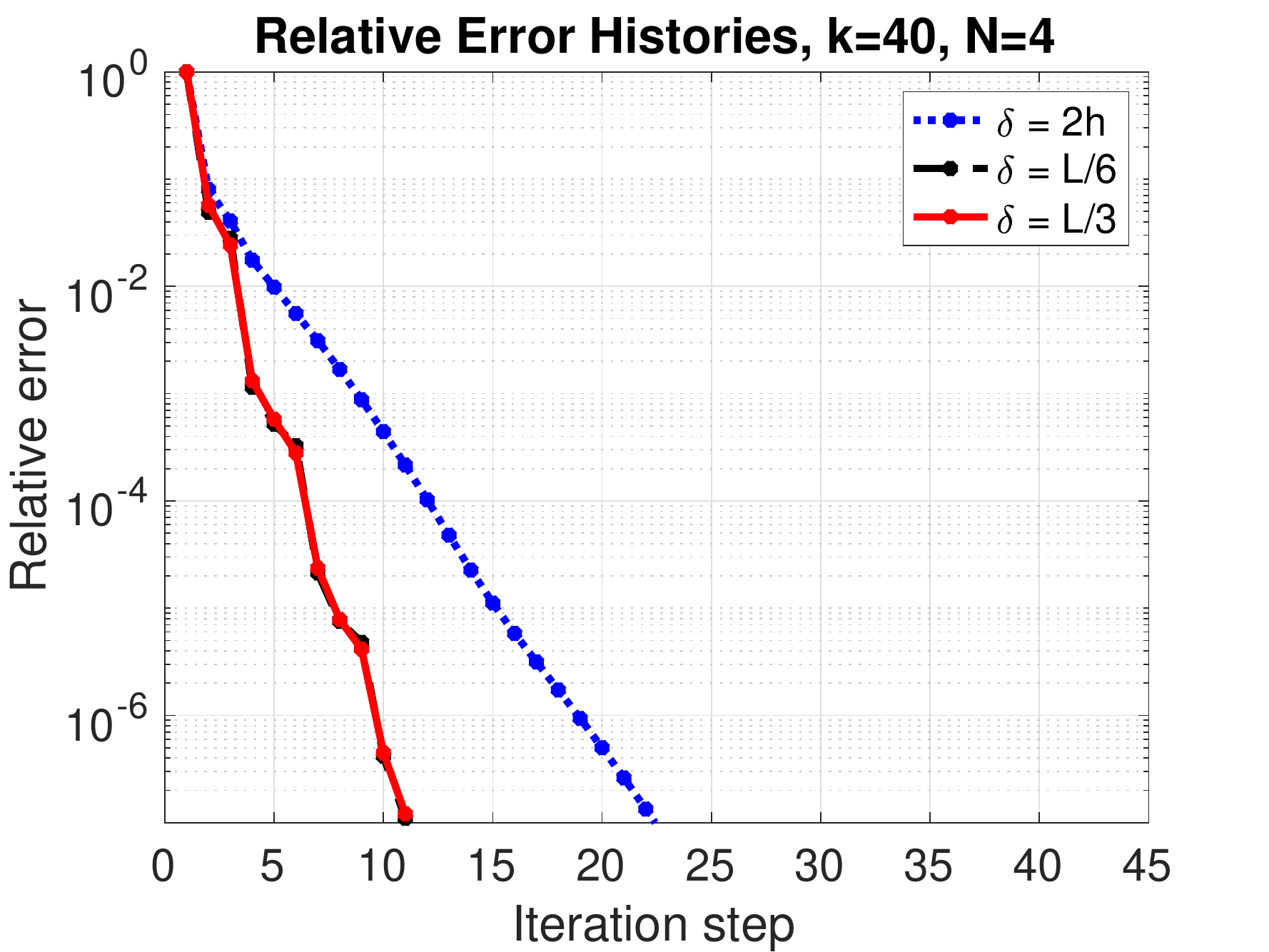}
        \caption{$N=4$\label{fig:errork80N4}}
         \end{subfigure}%
     \hfill
    \begin{subfigure}[t]{0.33\textwidth}
        \centering
        \includegraphics[width=.95\textwidth]{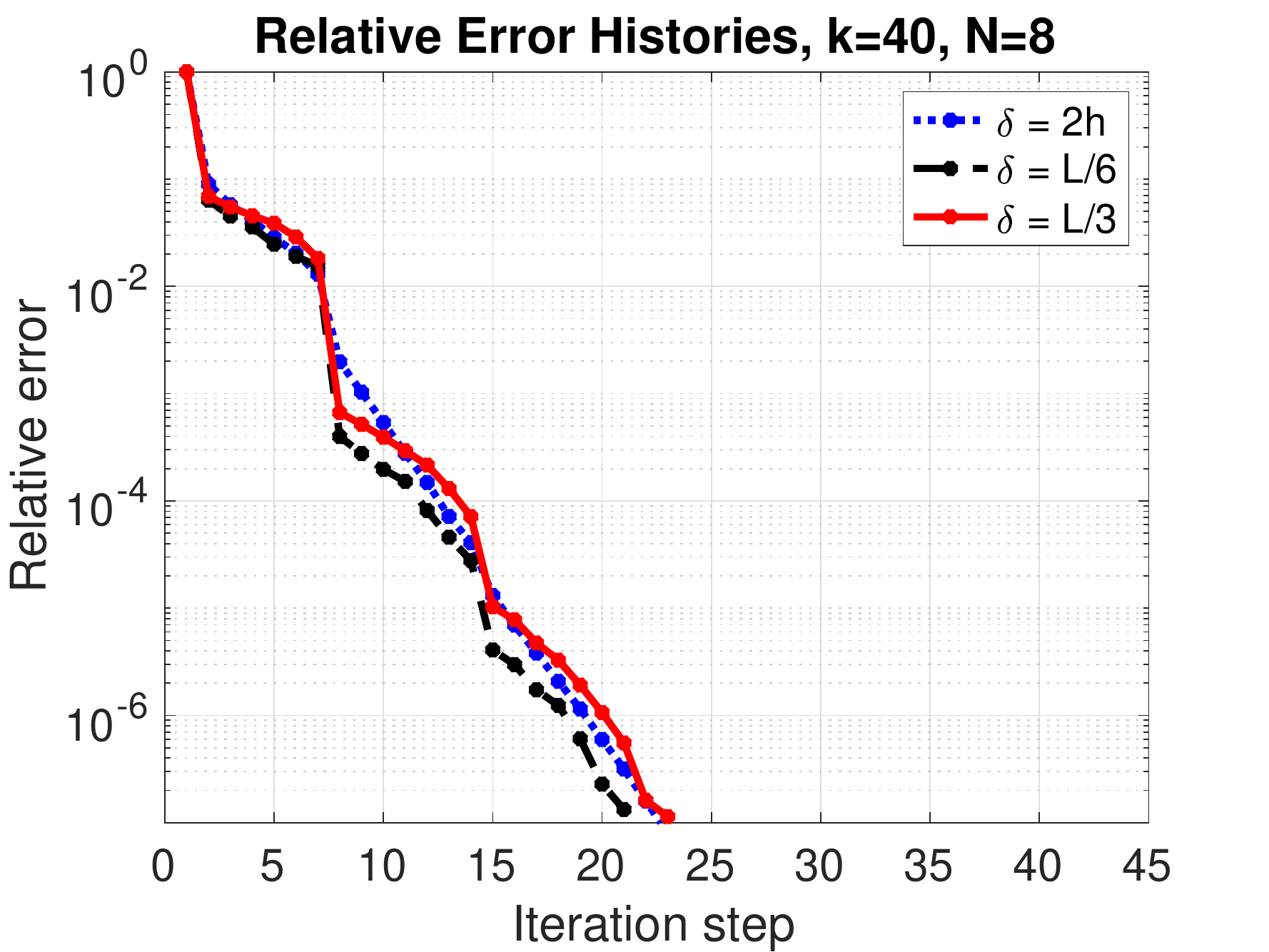}
        \caption{$N=8$ \label{fig:errork80N8}}
        \end{subfigure}%
           \begin{subfigure}[t]{0.33\textwidth}
        \centering
        \includegraphics[width=.95\textwidth]{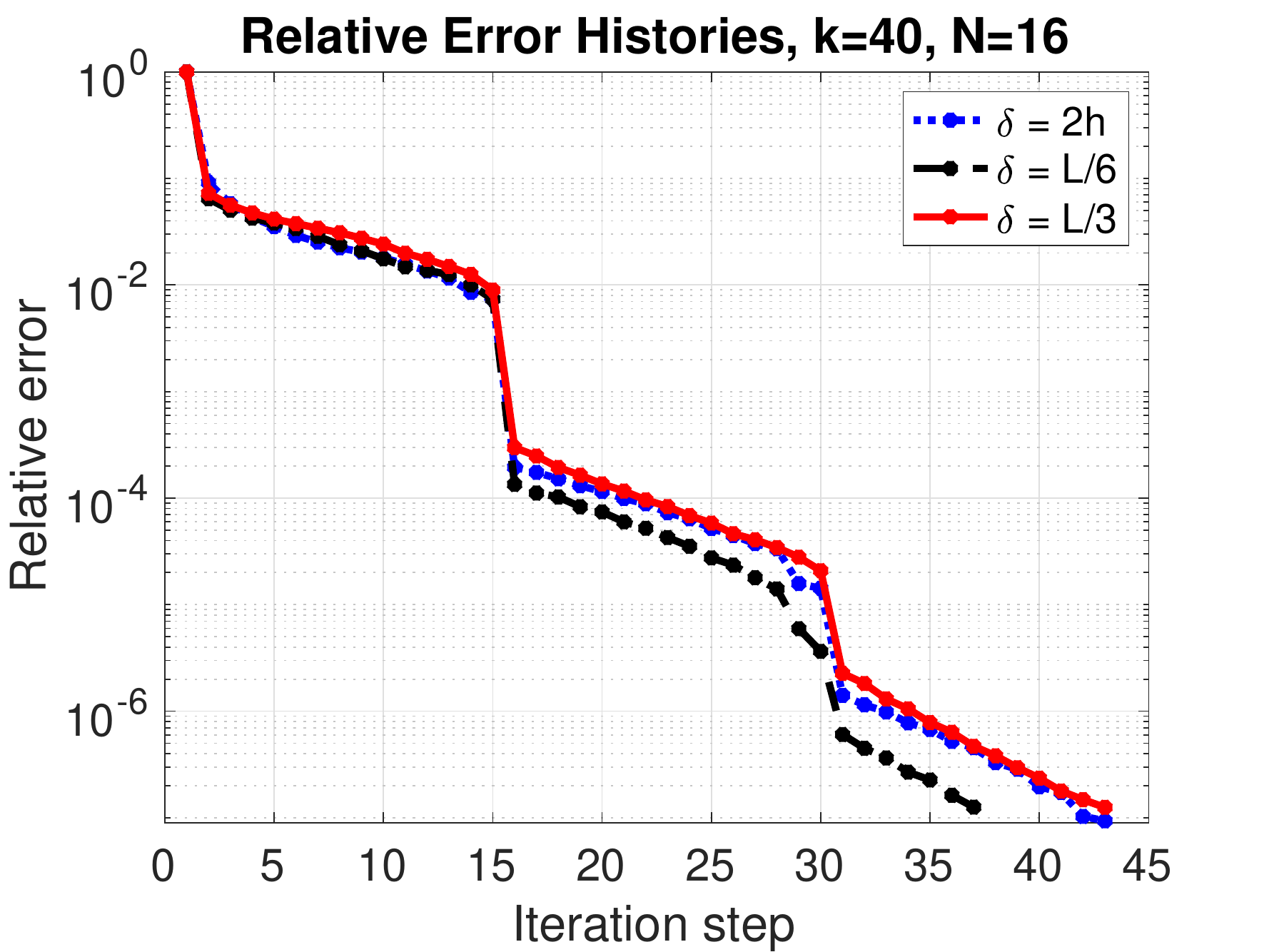}
        \caption{$N=16$ \label{fig:errork80N16}}
         \end{subfigure}%
           \caption{Relative error histories of the iterative methods with different overlaps }\label{fig:error-overlap}
\end{figure}

To give some heuristic explanation for Figure \ref{fig:error-overlap}, 
Tables \ref{tb:composite-imp2} and \ref{tb:composite-imp3} {provide} the analogous results to  Table \ref{tb:composite-imp1} for the new choices of overlap. 
As $N$ and $k$ increase, 
{the different choices of overlap all give similar values of $\zeta_N$}, thus explaining the competitiveness of the small  overlap method in this case.  

\begin{table}[H]
\setlength\extrarowheight{2pt} 
\centering
\scalebox{0.9}{
\begin{tabularx}{\textwidth}{C|CCCC}
\hline
\hline
 $   {k}$    &10   &  20	&40&80	\\ 
\hline
${\zeta}_2 = 2\rho$& 		$3.38$e$-1$& 	$3.60$e$-1$&	$4.60$e$-1$&	$5.64$e$-1$	\\
$ {\zeta}_4$&	$4.46$e$-2$&	$6.72$e$-2$&	$1.06$e$-1$&  $1.08$e$-1$	\\
$ {\zeta}_8$&	$2.82$e$-2$&	$8.88$e$-2$&	$1.02$e$-1$&  $1.06$e$-1$		\\
$ {\zeta}_{16}$&$4.48$e$-3$&	$9.92$e$-2$&	$4.66$e$-2$&	$7.86$e$-2$	\\
\hline
	\hline
\end{tabularx}}
\caption{Norm of the composite impedance-to-impedance map \eqref{eq:composit-imp-norm}, $\delta=L/6$ 
}\label{tb:composite-imp2}
\end{table}

\begin{table}[H]
\setlength\extrarowheight{2pt} 
\centering
\scalebox{0.9}{
\begin{tabularx}{\textwidth}{C|CCCC}
\hline
\hline
 $   {k}$    &10   &  20	&40&80	\\ 
\hline
${\zeta}_2 = 2\rho$& 		$7.68$e$-1$& 	$1.12$e$0$&	$1.40$e$0$&	$1.60$e$0$	\\
$ {\zeta}_4$&	$8.32$e$-2$&	$6.34$e$-2$&	$9.54$e$-2$&  $1.07$e$-1$	\\
$ {\zeta}_8$&	$4.44$e$-2$&	$7.00$e$-2$&	$8.48$e$-2$&  $7.30$e$-2$		\\
$ {\zeta}_{16}$&$4.98$e$-3$&	$6.92$e$-2$&	$2.94$e$-2$&	$8.34$e$-2$	\\
\hline
	\hline
\end{tabularx}}
\caption{Norm of the composite impedance-to-impedance map \eqref{eq:composit-imp-norm}, $\delta=2h$ 
}\label{tb:composite-imp3}
\end{table}
\end{experiment}

As discussed in \S\ref{subsec:Impedance}, 
the parameters $\rho, \gamma$ are computed {above} by the finite-element method, and 
Corollary \ref{cor:main} {ensures} that these {approximations} converge to the true values of $\rho, \gamma$ as $h \rightarrow 0$.
In practice, we compute $\rho, \gamma$ with
$h \sim k^{-5/4}$,  which is sufficient for ensuring a bounded error for the   Helmholtz problem as $k $ increases {by \cite[Corollary 5.2]{DuWu:15}}.
The following experiment 
{shows}  that this choice of $h$ also leads to accurate {computation of the}  impedance-to-impedance maps.

\begin{experiment}[Accuracy of the impedance-to-impedance map computation]\label{Expt5} 
  We compute  $\rho$  on the canonical domain ${\wOmega}$ depicted  in Figure
    \ref{fig:hat}, with $L=1$ and $\delta = 1/3$ for increasing $k$.
  In Table \ref{tb:imp-2algs}, we list computed values for  ${\rho}({k},{1}/{3},1)$
  (i.e., the norm of the left-to-right impedance-to-impedance map  - see \eqref{def:rho_gamma}), using mesh sizes  $h$,
  chosen as decreasing multiples of $k^{-5/4}$.  
  A `brute force' computation of
  the impedance-to-impedance map by numerical differentiation of the finite-element solution 
  {gave} almost identical results to those given in Table \ref{tb:imp-2algs}; we {therefore} conclude that the  computation of $\rho$ is sufficiently
  accurate when $h ={k}^{-{5}/{4}}$.   

\begin{table}[H]
\setlength\extrarowheight{2pt} 
\centering
\scalebox{0.9}{
\begin{tabularx}{\textwidth}{C|CCCC}
\hline
\hline
\multirow{1}{*}{${k} \backslash h$}& \multicolumn{1}{c}{${k}^{-{5}/{4}}$}& \multicolumn{1}{c}{$\frac12 {k}^{-{5}/{4}}$ }& \multicolumn{1}{c}{$\frac13 {k}^{-{5}/{4}}$}& \multicolumn{1}{c}{$\frac14 {k}^{-{5}/{4}}$} \\ 
\hline 
$10$	&0.171		&0.171		&0.171		&0.171	\\
$20$&0.188		&0.189		&0.190		&0.191	\\
$40$&0.235		&0.234		&0.236		&0.236	\\
\hline
	\hline
\end{tabularx}}
\caption{Numerical computation of ${\rho}({k},{1}/{3},1)$ 
}\label{tb:imp-2algs}
\end{table}

\end{experiment}

\subsection{Domain decompositions that are not of strip type}

\begin{experiment}[Unit square with uniform checkerboard decomposition] \label{Expt6} 
We partition  \\$\Omega := (0,1)^2$ into $N^2$ non-overlapping equal subsquares each with side length  $H = 1/N$, and then extend to an overlapping cover    by adding to each subdomain  neighbouring elements that have  distance  $\leq \delta$ 
from its boundary (so the actual overlap is $2\delta$). Tables \ref{tb:checkerboard1}-\ref{tb:checkerboard3} give {the} iteration counts   for the
    method \eqref{eq21}--\eqref{star} required to achieve a reduction of $10^{-6}$ in the 
Euclidean norm of the relative residual (with zero right-hand side and starting from a random initial guess), with overlap parameter $\delta = H/4, H/10$, and $h$, respectively. 
    We also give {(in brackets in each table)} the number of iterations needed by the  corresponding GMRES-accelerated iteration (that is GMRES using the  \ig{`ORAS' preconditioner implicit in \eqref{eq:discT}, \eqref{eq:update} -- see also \cite{GoGaGrSp:21}} ) to obtain a relative residual of $10^{-6}$.

The number of iterations of the iterative method grows  somewhere between $\mathcal{O}(N)$ and $\mathcal{O}(N^2)$, where $N^2$ is the number of subdomains.
    In contrast,  the number of GMRES iterations seems to grow somewhat more slowly -  close to $\mathcal{O}(N)$.   It appears that while the iterative method is not effective as a solver when the subdomains do not contain enough wavelengths, the GMRES method continues to function acceptably and is robust or even improving as $k$ increases. (Related theory and observations are given around \cite[Table 3]{GrSpZo:20}.)
    In contrast to the strip domain case, we also observe  that reducing the overlap does significantly affect the performance of both the iterative method and GMRES.

\begin{table}[H]
\setlength\extrarowheight{2pt} 
\centering
\scalebox{0.9}{
\begin{tabularx}{\textwidth}{C|CCC}
\hline
\hline
 $ k\backslash N\times N$ & $2\times2$  &  $4\times4$	 & $8\times8$ 	\\ 
\hline 			
40&	5 (5)&	14 (13)&	45 (28)\\
80&	5 (5)&	13 (12)&	29 (25)\\
120&	5 (5)&	12 (11)&	41 (24)\\
160&	4 (4)&	11 (10)&	29 (23)\\
\hline
	\hline
\end{tabularx}}
\caption{Checkerboard: Iteration counts for the iterative method (GMRES),  $\delta=H/4$
 }\label{tb:checkerboard1}
\end{table}

\begin{table}[H]
\setlength\extrarowheight{2pt} 
\centering
\scalebox{0.9}{
\begin{tabularx}{\textwidth}{C|CCC}
\hline
\hline
 $ k\backslash N\times N$ & $2\times2$  &  $4\times4$	 & $8\times8$ 	\\ 
\hline 			
40&	7 (6)&	32 (21)&	200+ (35)\\
80&	6 (6)&	19 (21)&	200+ (37)\\
120&	6 (6)&	18 (20)&	200+ (35)\\
160&	5 (5)&	18 (20)&	76 (30)\\
\hline
	\hline
\end{tabularx}}
\caption{ Checkerboard: Iteration counts for the iterative method (GMRES), $\delta=H/10$
 }\label{tb:checkerboard2}
\end{table}

\begin{table}[H]
\setlength\extrarowheight{2pt} 
\centering
\scalebox{0.9}{
\begin{tabularx}{\textwidth}{C|CCC}
\hline
\hline
 $ k\backslash N\times N$ & $2\times2$  &  $4\times4$	 & $8\times8$ 	\\ 
\hline 			
40&	16 (12)&	27 (21)&	101 (39)\\
80&	21 (14)&	30 (22)&	200+ (42)\\
120&	25 (16)&	34 (23)&	200+ (43)\\
160&	30 (17)&	36 (24)&	200+ (42)\\
\hline
	\hline
\end{tabularx}}
\caption{ Checkerboard: Iteration counts for the iterative method (GMRES), $\delta=h$
 }\label{tb:checkerboard3}
\end{table}

\end{experiment}

\begin{experiment}[Uniform square with domain decomposition via METIS]\label{Expt7}   
{We consider the same set-up as in}  Experiment \ref{Expt6}, {but} instead of using the checkerboard domain decomposition,
  we use METIS to generate a non-overlapping domain decomposition -- see Figure \ref{fig:metis} for {plots for} $N=4,16,64$ subdomains -- and then we extend to an overlapping cover in the same way as before.  Tables \ref{tb:metis1}-\ref{tb:metis3} give iteration counts  for  the iterative method (and GMRES in brackets) for each choice of $\delta$.  The iteration counts 
  {behave similarlly} to those given in Experiment \ref{Expt6}. Again, we notice the almost-robustmess of the
  GMRES method as $k$ increases for all choices of $\delta$, and particularly 
  for  generous overlap.

\begin{figure}[H]
    \centering
    \begin{subfigure}[t]{0.33\textwidth}
        \centering
        \includegraphics[width=.95\textwidth]{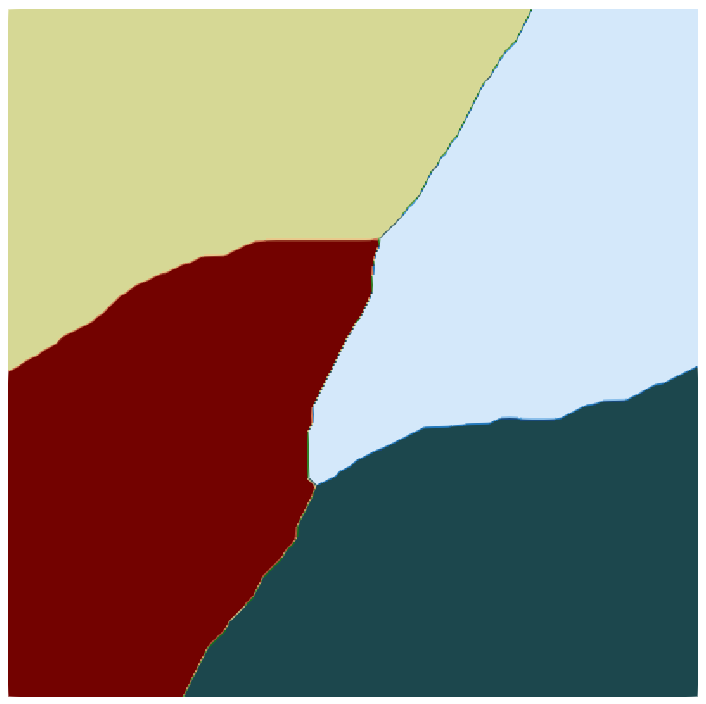}
        \caption{4 subdomains}\label{fig:metis4}
         \end{subfigure}%
     \hfill
    \begin{subfigure}[t]{0.33\textwidth}
        \centering
        \includegraphics[width=.95\textwidth]{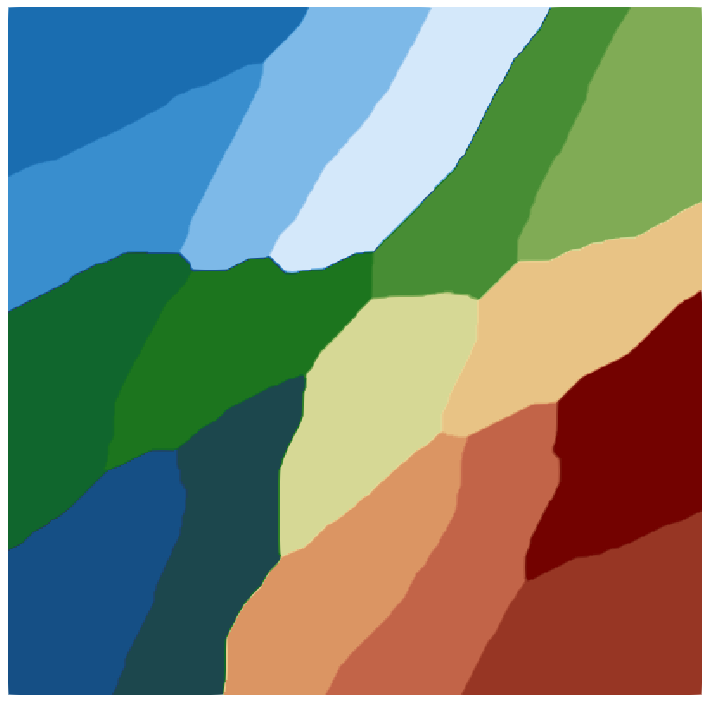}
        \caption{16 subdomains}\label{fig:metis16}
        \end{subfigure}%
           \begin{subfigure}[t]{0.33\textwidth}
        \centering
        \includegraphics[width=.95\textwidth]{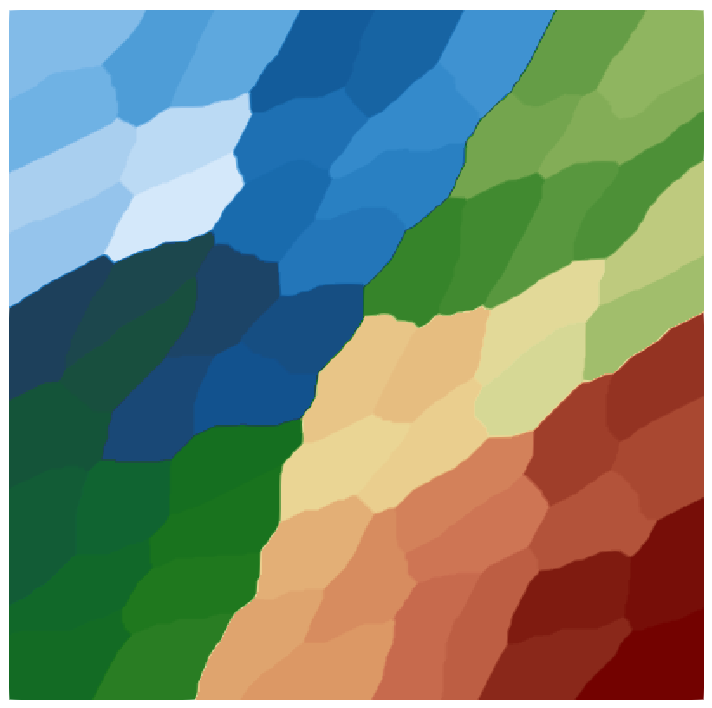}
        \caption{64 subdomains}\label{fig:metis64}
         \end{subfigure}%
           \caption{METIS non-overlapping domain decompositions}\label{fig:metis}
\end{figure}

\begin{table}[H]
\setlength\extrarowheight{2pt} 
\centering
\scalebox{0.9}{
\begin{tabularx}{\textwidth}{C|CCC}
\hline
\hline
 $ k\backslash N$ & $4$  &  $16$	 & $64$ 	\\ 
\hline 			
40&	8 (7)&	20 (17)&	73 (39)\\
80&	7 (7)&	19 (17)&	57 (37)\\
120&	6 (6)&	17 (16)&	41 (33)\\
160&	6 (6)&	16 (15)&	40 (33)\\
\hline
	\hline
\end{tabularx}}
\caption{Number of iterations of the iterative method and GMRES counts (in brackets),  METIS domain decomposition for the unit square, $\delta=H/4$
 }\label{tb:metis1}
\end{table}

\begin{table}[H]
\setlength\extrarowheight{2pt} 
\centering
\scalebox{0.9}{
\begin{tabularx}{\textwidth}{C|CCC}
\hline
\hline
 $ k\backslash N$ & $4$  &  $16$	 & $64$ 	\\ 
\hline 			
40&	9 (9)&	27 (21)&	109 (45)\\
80&	9 (9)&	24 (21)& 	159 (47)\\
120&	8 (8)&	24 (20)&	104 (43)\\
160&	8 (7)&	23 (20)&	104 (41)\\
\hline
	\hline
\end{tabularx}}
\caption{Number of iterations of the iterative method, METIS domain decomposition for the unit square, $\delta=H/10$
 }\label{tb:metis2}
\end{table}

\begin{table}[H]
\setlength\extrarowheight{2pt} 
\centering
\scalebox{0.9}{
\begin{tabularx}{\textwidth}{C|CCC}
\hline
\hline
 $ k\backslash N$ & $4$  &  $16$	 & $64$ 	\\ 
\hline 			
40&	20 (14)&	33 (25)&	86 (48)\\
80&	27 (17)&	33 (26)&	86 (53)\\
120&	28 (18)&	36 (27)&	82 (51)\\
160&33 (20)&	200+ (30)& 200+ (53)	\\
\hline
	\hline
\end{tabularx}}
\caption{Number of iterations of the iterative method, METIS domain decomposition for the unit square, $\delta=h$
 }\label{tb:metis3}
\end{table}

  \end{experiment}

  \subsection{Possible approach to the theory for the checkerboard case}
  \label{subsec:possible}

  To finish the paper  we include some  remarks on how the fundamental general  theory developed here could be
  extended to get convergence estimates for  more general decompositions. Here we only (and rather tentatively) discuss  the case of a $2\times 2$ checkerboard decomposition depicted in Figure \ref{fig:checker}.
More   general decompositions could be approached by extending this reasoning, although, we also admit such reasoning
  will become increasingly complex for more general domain decompositions.

Consider the unit square divided into four quarters, yielding non-overlapping subdomains labelled 1,2,3,4 in Figure \ref{fig:checker}. These are extended to  overlapping subdomains $\Omega_\ell$, $\ell = 1,2,3,4$. We depict only $\Omega_1$ (in red) and $\Omega_2$ (in blue) in the figure.
  \begin{figure}[H]
    \centering
\vspace{-2cm} 
    \includegraphics[scale=0.4]{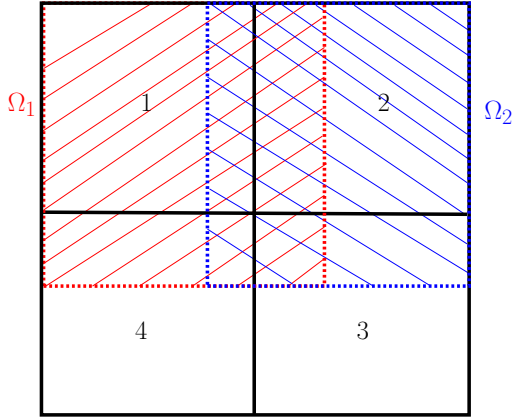}
    \vspace{-3cm}
    \caption{A simple checkerboard decomposition }\label{fig:checker} 
\end{figure} 
All the results of this paper up to the end of \S \ref{sec:fixedpoint} apply to this geometry.  With four subdomains, it is natural to look at the fourth power of the propagation operator $\cT$. It can easily be shown by induction that
  \begin{align} \label{eq:T4} 
  (\cT^4)_{j,\ell} = \sum_{\boldsymbol{m}   \in \cR(j,\ell,4)} \cT_{j,m_1}\cT_{m_1,m_2}\cT_{m_2,m_3}\cT_{m_3,\ell},\end{align} 
where $\cR(j,\ell,4)$ denotes the collection of all paths from domain $j$ to domain $\ell$ passing through three other domains en route.  Then  $\boldsymbol{m}=(m_1,m_2,m_3,m_4,m_5)$ represents a typical  path
\begin{align} \label{eq:sequence} j = m_1 \rightarrow m_2 \rightarrow m_3 \rightarrow m_4 \rightarrow m_5 = \ell\end{align}
from $\Omega_j$ to $\Omega_\ell$, with
  $m_1 \not = m_2$, $m_2 \not = m_3$, $m_3 \not = m_4$, and $m_4 \not = m_5$.

  The first thing to note is that many of the component products in   \eqref{eq:T4} have already been  investigated earlier in this paper.
  For example, consider the product $\cT_{1,2}\cT_{2,1}$; by Theorem \ref{thm:main_T2} we see that, as $k$ gets larger then the norm of this product is well-estimated by the norm of the impedance-to-impedance map
  $\cI_{\Gamma_{2,1}\rightarrow \Gamma_{1.2}}$.  This map operates only in the $x-$direction and was analysed  in \S  \ref{subsec:canonical}. In fact it corresponds to the left-to-right map defined by (a) finding the Helmholtz-harmonic function on $\Omega_2$ with data on $\partial \Omega_2 \cap \Omega_1$ and (b)  evaluating the right-facing impedance data of the solution on  $\partial \Omega_1\cap \Omega_2$. This map was analysed in \S \ref{sec:conv} and shown to have norm of   order $\mathcal{O}(\rho)$. Similar remarks apply to  $\cT_{2,1}\cT_{1,2}$, $\cT_{1,4}\cT_{4,1}$, etc. However the `diagonal switches' $\cT_{1,3}\cT_{3,1}$, $\cT_{3,1}\cT_{1,3}$, $\cT_{1,4}\cT_{4,1}$ and $\cT_{4,1}\cT_{1,4}$ are genuinely two-dimensional and the properties of the corresponding impedance maps would need to be studied numerically.

  More generally we could conjecture that if the sequence in \eqref{eq:sequence} visits the same subdomain twice then the norm of the corresponding term in \eqref{eq:T4} is small in size. This conjecture would have  to be examined numerically, but would then give estimates  for  all off-diagonal terms in \eqref{eq:T4}.    

  Finally, one would have to analyse the cycles which appear in the diagonal terms in \eqref{eq:T4}  e.g. the term corresponding to  $1 
   \rightarrow 2 \rightarrow 3 \rightarrow 4 \rightarrow 1$.

  This further analysis is outside the scope of the present paper. However, while  it is clear from experiments that higher powers of $\cT$ are still contractive in the checkerboard (and more general) cases, the convergence profile (at least in certain norms \cite{GoGaGrSp:21}) does not exhibit the same jumps when one passes from $n = sN$ to $n = (s+1)N$ as are present in the strip domain case (see Figures 
\ref{fig:errorN3}, \ref{fig:error}). 

\bigskip

\paragraph{Acknowledgements.} 
We gratefully acknowledge support from the UK Engineering and Physical Sciences Research Council  Grants EP/R005591/1 (DL and EAS) and   EP/S003975/1
(SG, IGG, and EAS).  This research made use of the Balena High Performance Computing (HPC) Service at the University of Bath.

{\footnotesize
\bibliographystyle{plain}
\bibliography{combined.bib}
}
\section*{Appendix}

\emph{Proof of Proposition \ref{prop:sumsum}}.  We prove \eqref{eq:prod} by induction. Clearly it holds for $n = 1$.
  Assuming it holds for any $n \geq 1$, then
  \begin{align} \label{eq:prop1} (x + y)^{n+1} \ =  
  &\  (x+y) \left(\sum_{j=1}^{n-1} \sum_{p\in \cP(n,j)}p(x,y) + x^n +y^n\right)
  \nonumber \\
   = & \
   \sum_{j=1}^{n-1} \left( \sum_{p\in \cP(n,j,x)} + \sum_{p \in \cP(n,j,y) } \right) x p(x,y) \ + \ xy^n \\
   & + \ \sum_{j=1}^{n-1} \left( \sum_{p\in \cP(n,j,x)} + \sum_{p \in \cP(n,j,y) } \right) y p(x,y) \
                                                      + \ yx^n \label{eq:prop2}\\
                                                    & + x^{n+1} + y^{n+1}, \label{eq:prop3}   \end{align}
          where $\cP(n,j,x), \cP(n,j,y)$ denote, respectively, the monomials of the
             form \eqref{eq:monomx}, \eqref{eq:monomy}. In this notation,
             \begin{align} \cP(n+1, j, x) & = \left( x \cP(n,j,x)\right) \cup \left( x \cP(n,j-1,y)\right) \quad \text{for} \quad 1 \leq j \leq n-1, \label{eq:prop4} \\ \cP(n+1,n,x) &= x\cP(n,n-1,y).\label{eq:prop5}  \end{align}
             Hence the term \eqref{eq:prop1} equals
             \begin{align}    & \quad \sum_{j=2}^{n-1} \left( \sum_{p\in \cP(n,j,x)} + \sum_{p \in \cP(n,j-1,y) } \right) x p(x,y) \       + \sum_{p \in \cP(n,1,x)} x p(x,y) + xy^n + \sum_{p \in \cP(n,n-1,y)} x p(x,y) \nonumber \\
               & = \ \sum_{j=2}^{n-1} \left( \sum_{p\in \cP(n,j,x)} + \sum_{p \in \cP(n,j-1,y) } \right) x p(x,y) \       + \left( \sum_{p \in \cP(n+1,1,x)}   +  \sum_{p \in \cP(n+1,n,x)}\right)   p(x,y) \nonumber \\
               & = \ \sum_{j=1}^n \sum_{p \in \cP(n+1,j,x)} p(x,y),
             \label{eq:prop6}\end{align}
           where in the second step we used both \eqref{eq:prop4} with $j=1$ and \eqref{eq:prop5}. A similar argument shows the term \eqref{eq:prop2} can also be written in the form \eqref{eq:prop6}, but with the sum over $\cP(n+1,j,x)$ replaced by the sum over $\cP(n+1,j,y)$.            Putting these results together with \eqref{eq:prop1} -- \eqref{eq:prop3} shows that \eqref{eq:prod} holds for $n+1$.

           The proof of \eqref{eq:count} also uses induction on $n$. Note that $\# \cP(1,0) = 2$, so the result holds for $n = 1$. If it holds for $n$ then it holds for $n+1$ by observing (analogously to \eqref{eq:prop4}), that
           $$ \# \cP(n+1, j) = \# \cP(n,j) + \# \cP(n,j-1) ,$$
           and then using elementary properties of the binomial coefficient. 
\qed

\end{document}